\let\tops=\texorpdfstring
\newcommand{\conn}{\operatorname{conn}}
\newcommand{\gT}{\THH}
\newcommand{\TPhi}{\operatorname{T\Phi}}
\newcommand{\spoke}{\Yright}
\DeclareMathOperator{\rad}{rad}
\renewcommand{\th}{\text{th}}
\newcommand{\gh}{\mathrm{gh}}
\newcommand{\MWitt}{
 \begin{tikzpicture}[x=1.2ex,y=1.2ex]
    \draw (0,0) rectangle +(1.6,1.6);
 \end{tikzpicture}
}
\newcommand{\Ma}{
  \begin{tikzpicture}[x=1.2ex,y=1.2ex]
    \fill (0,0) circle (.8);
  \end{tikzpicture}
}
\newcommand{\Mb}{
  \begin{tikzpicture}[x=1.2ex,y=1.2ex]
    \fill (0,0) -- (1.6,0) -- (.8,{1.6*-sqrt(3)/2}) -- cycle;
  \end{tikzpicture}
}
\providecommand\Mc{}
\renewcommand{\Mc}{
  \begin{tikzpicture}[x=1.2ex,y=1.2ex]
    \draw (0,0) rectangle +(1.6,1.6);
    \fill (0,0) -- (1.6,0) -- (1.6,1.6) -- cycle;
  \end{tikzpicture}
}
\newcommand{\Md}{
  \begin{tikzpicture}[x=1.2ex,y=1.2ex]
    \fill (0,.8) -- (.8, 0) -- (1.6, .8) -- (.8, 1.6) -- cycle;
  \end{tikzpicture}
}
\newcommand{\Me}{\bigstar}
\newcommand{\Mf}{\spadesuit}
\newcommand{\Mg}{\clubsuit}
\newcommand{\Mta}{
  \begin{tikzpicture}[x=1.2ex,y=1.2ex]
    \draw (0,0) circle (.8);
  \end{tikzpicture}
}
\newcommand{\Mtb}{
  \begin{tikzpicture}[x=1.2ex,y=1.2ex]
    \draw (0,0) -- (1.6,0) -- (.8,{1.6*-sqrt(3)/2}) -- cycle;
  \end{tikzpicture}
}
\newcommand{\Mtc}{
  \begin{tikzpicture}[x=1.2ex,y=1.2ex]
    \draw (0,0) rectangle +(1.6,1.6);
    \draw (0,0) -- (1.6,1.6);
  \end{tikzpicture}
}
\newcommand{\Mtd}{
  \begin{tikzpicture}[x=1.2ex,y=1.2ex]
    \draw (0,.8) -- (.8, 0) -- (1.6, .8) -- (.8, 1.6) -- cycle;
  \end{tikzpicture}
}
\newcommand{\Mte}{\text{\FiveStarOpen}}
\newcommand{\ds}{\displaystyle}
\renewcommand{\ss}{\scriptstyle}
\newcommand{\<}{\langle}
\newcommand{\>}{\rangle}
\newcommand{\lc}{\left\lceil}
\newcommand{\rc}{\right\rceil}
\newcommand{\lf}{\left\lfloor}
\newcommand{\rf}{\right\rfloor}
\newcommand{\defeq}{\mathrel{:=}}
\newcommand{\tnsr}{\otimes}
\newcommand{\conv}{\Rightarrow}
\newcommand{\iso}{\morph^\sim}
\newcommand{\isom}{\cong}
\newcommand{\mmod}{/\!\!/}
\newcommand{\psr}[1]{[\![#1]\!]}
\DeclareMathOperator{\Hom}{Hom}
\DeclareMathOperator{\Map}{Map}
\DeclareMathOperator{\Aut}{Aut}
\DeclareMathOperator{\Spec}{Spec}
\DeclareMathOperator{\Gal}{Gal}
\newcommand{\Fil}{\mathrm{F}}
\newcommand{\morph}{\mathop{\longrightarrow}\limits}
\newcommand{\xra}{\xrightarrow}
\renewcommand{\projlim}{\mathop{\operatorname*{lim}\limits_{\longleftarrow}}\limits}
\renewcommand{\lim}{\projlim}
\newcommand{\colim}{\mathop{\operatorname*{lim}\limits_{\longrightarrow}}\limits}
\DeclareMathOperator{\tr}{tr}
\DeclareMathOperator{\im}{im}
\newcommand{\res}{\mathrm{res}}
\newcommand{\can}{\mathrm{can}}
\newcommand{\aInd}[2]{\mathop{\uparrow^{#2}_{#1}}\nolimits}
\newcommand{\aRes}[2]{\mathop{\downarrow}^{#1}_{#2}\nolimits}
\newcommand{\C}{\mathbb C}
\newcommand{\F}{\mathbb F}
\newcommand{\G}{\mathbb G}
\newcommand{\N}{\mathbb N}
\renewcommand{\O}{\mathcal O}
\newcommand{\Q}{\mathbb Q}
\newcommand{\R}{\mathbb R}
\let\sec=\S
\renewcommand{\S}{\mathbb S}
\newcommand{\Z}{\mathbb Z}
\renewcommand{\L}{\mathbb L}
\newcommand{\E}{\mathbb E}
\newcommand{\cF}{\mathcal F}
\newcommand{\cP}{\mathcal P}
\newcommand{\rmS}{\mathrm S}
\newcommand{\frN}{\mathfrak N}
\newcommand{\Zpcycl}{\Z_p^{\mathrm{cycl}}}
\newcommand{\Qpcycl}{\Q_p^{\mathrm{cycl}}}
\newcommand{\MU}{\mathrm{MU}}
\newcommand{\K}{{\mathrm{K}}}
\newcommand{\KU}{{\mathrm{KU}}}
\newcommand{\ku}{{\mathrm{ku}}}
\newcommand{\HH}{\mathrm{HH}}
\newcommand{\HC}{\mathrm{HC}}
\newcommand{\HP}{\mathrm{HP}}
\newcommand{\THH}{\mathrm{THH}}
\newcommand{\TR}{\mathrm{TR}}
\newcommand{\TC}{\mathrm{TC}}
\newcommand{\TCmin}{\mathrm{TC}^-}
\newcommand{\TP}{\mathrm{TP}}
\newcommand{\TF}{\mathrm{TF}}
\newcommand{\TM}{\mathrm{TM}}
\newcommand{\TV}{\mathrm{TV}}
\newcommand{\T}{\mathbb T}
\newcommand{\BK}{\mathfrak S}
\newcommand{\CycFL}{{\Sp^\psi}}
\newcommand{\CycFLp}{{\Sp^\psi_p}}
\newcommand{\CycSp}{{\Sp^\varphi}}
\newcommand{\CycSpp}{{\Sp^\varphi_p}}
\newcommand{\CyclSp}{{\Sp^\xi}}
\newcommand{\Ainf}{{\mathbf{A}_{\mathrm{inf}}}}
\renewcommand{\~}{\tilde}
\DeclareSymbolFontAlphabet{\mathbb}{AMSb} 
\DeclareSymbolFontAlphabet{\mathbbl}{bbold}
\newcommand{\RO}{RO}
\newcommand{\RU}{RU}
\newcommand{\m}{\underline}
\newcommand{\mpi}{\underline\pi}
\newcommand{\rog}{\bigstar}
\newcommand{\reg}{{\mathsf P}}
\newcommand{\regslice}[1]{\reg^{#1}_{#1}}
\newcommand{\regconn}[1]{\reg_{#1}}
\newcommand{\regtrunc}[1]{\reg^{#1}}
\newcommand{\cls}{{\mathsf P}^{\mathrm{cls}}}
\newcommand{\clsslice}[1]{\cls^{#1}_{#1}}
\newcommand{\clsconn}[1]{\cls_{#1}}
\newcommand{\drep}[1]{[#1]_\lambda}
\newcommand{\crep}[1]{\lambda[n-1]_\lambda}
\newcommand{\Ab}{\mathrm{Ab}}
\newcommand{\Spc}{{\mathcal S}}
\newcommand{\Sp}{\mathrm{Sp}}
\newcommand{\Top}{\mathrm{Top}}
\newcommand{\Mod}{\mathrm{Mod}}
\newcommand{\op}{\mathrm{op}}
\renewcommand{\H}{\mathrm H}
\newcommand{\tate}{\widehat\H}
\newcommand{\fib}{\operatorname{fib}}
\newcommand{\mup}[1]{\ar@/_1em/[u]_-{#1}}
\newcommand{\mdown}[1]{\ar@/_1em/[d]_-{#1}}
\newcommand{\mcup}[2][]{\ar@[#1]@/_1em/[u]_-{\color{#1}#2}}
\newcommand{\mcdown}[2][]{\ar@[#1]@/_1em/[d]_-{\color{#1}#2}}
\newcommand{\sLewiss}[7]{\xymatrix@=1.8em{
  #1 \mdown{#4}\\#2 \mup{#5} \mdown{#6} \\#3 \mup{#7}
}}
\newcommand{\pullback}{\ar@{}[dr]|<<{\mbox{\Huge$\lrcorner$}}}
\newcommand{\pushout}{\ar@<2pt>@{}[ul]|<{\mbox{\Huge$\ulcorner$}}}
\newcommand{\adjnctn}[4]{\xymatrix@1{
  #1 \ar@<1ex>[r]^-{#3} \ar@{}[r]|-{\bot} & #2 \ar@<1ex>[l]^-{#4}
}}
\newcommand{\dadjnctn}[6][2em]{
  \xymatrix@1@C=#1{
    #2 \ar@<2ex>[r]^-{#4} \ar@<-2ex>[r]_-{#6}
    \ar@{}@<1.2ex>[r]|-{\scriptscriptstyle\bot} \ar@{}@<-1.2ex>[r]|-{\scriptscriptstyle\bot}
    &
    #3 \ar[l]|-{#5}
  }
}
\newcommand{\dimorph}[4]{\xymatrix@1{
  #1 \ar@<1ex>[r]^-{#3} \ar@<-1ex>[r]_-{#4} & #2
}}
\newcommand{\twoCell}[5]{\xymatrix@1{
  #1 \ar@<1ex>[r]^-{#3} \ar@{}[r]|{\Downarrow #5} \ar@<-1ex>[r]_-{#4} & #2
}}
\newcommand{\ndo}[2]{\xymatrix@1{
  #1 \ar[r]^-{#2} & #1
}}
\newtheorem{theorem}{Theorem}[section]
\newtheorem*{theorem*}{Theorem}
\newtheorem{lemma}[theorem]{Lemma}
\newtheorem{corollary}[theorem]{Corollary}
\newtheorem{proposition}[theorem]{Proposition}
\newtheorem*{conjecture*}{Conjecture}
\theoremstyle{definition}
\newtheorem{definition}[theorem]{Definition}
\newtheorem*{definition*}{Definition}
\newtheorem{observation}[theorem]{Observation}
\newtheorem{remark}[theorem]{Remark}
\newtheorem*{remark*}{Remark}
\newtheorem{example}[theorem]{Example}
\newtheorem*{example*}{Example}
\newtheorem{question}[theorem]{Question}
\newtheorem{openproblem}[theorem]{Open Problem}
\newtheorem{warning}[theorem]{Warning}
\newtheorem*{warning*}{Warning}
\newtheorem*{heuristic*}{Heuristic}
\newtheorem{abuse}[theorem]{Abusive Notation}
\providecommand\@dotsep{5}
\renewcommand{\listoftodos}[1][\@todonotes@todolistname]{%
  \@starttoc{tdo}{#1}}
\begin{document}

\title{A slice refinement of B\"okstedt periodicity}

\author[Y.~J.~F.~Sulyma]{Yuri~J.~F. Sulyma}
\address{Brown University \\ Providence, RI 02912}
\email{yuri\_sulyma@brown.edu}
\thanks{Sulyma was supported in part by NSF grants DMS-1564289 and DMS-1151577}

\begin{abstract}
Let $R$ be a perfectoid ring. Hesselholt and Bhatt-Morrow-Scholze have identified the Postnikov filtration on $\THH(R;\Z_p)$: it is concentrated in even degrees, generated by powers of the B\"okstedt generator $\sigma$, generalizing classical B\"okstedt periodicity for $R=\F_p$. We study an equivariant generalization of the Postnikov filtration, the \emph{regular slice filtration}, on $\THH(R;\Z_p)$. The slice filtration is again concentrated in even degrees, generated by $RO(\mathbb T)$-graded classes which can loosely be thought of as the \emph{norms} of $\sigma$. The slices are expressible as $\RO(\mathbb T)$-graded suspensions of Mackey functors obtained from the Witt Mackey functor. We obtain a sort of filtration by $q$-factorials. A key ingredient, which may be of independent interest, is a close connection between the Hill-Yarnall characterization of the slice filtration and Ansch\"utz-le Bras' $q$-deformation of Legendre's formula.
\end{abstract}

\maketitle
\tableofcontents

\section{Introduction}

In \cite{BMS2}, the authors construct ``motivic'' filtrations on $\THH$ and its variants, applying this to construct (completed) prismatic cohomology equipped with the Nygaard filtration. Their construction works by quasi-syntomic descent to the case of perfectoid rings, where the filtration is given by the Postnikov filtration. In this case, they show that for a perfectoid ring $R$,
\[ \pi_*\THH(R;\Z_p) = R[\sigma],\quad|\sigma|=2, \]
generalizing earlier work of Hesselholt for the case $R=\O_{\C_p}$ \cite{LarsOCp} and B\"okstedt's foundational calculation for $R=\F_p$ \cite{Bokstedt}.

Their work depends on the work of Nikolaus-Scholze \cite{NikolausScholze}, who give (in the bounded below case) a description of cyclotomic spectra in terms of Borel equivariant homotopy theory. However, for most of history cyclotomic spectra were studied via \emph{genuine} equivariant homotopy theory. In the genuine toolbox, a powerful way to study $G$-spectra is via the (regular, equivariant) \emph{slice filtration}. Modelled on the slice filtration from motivic homotopy theory, this was developed by Dugger \cite{Dugger} in the case $G=C_2$, and later generalized to finite groups by Hill-Hopkins-Ravenel as one of the main tools in their solution of the Kervaire invariant one problem \cite{HHR}. We will be interested in the ``regular'' slice filtration, first described by Ullman \cite{Ullman}, which has better multiplicative properties than the ``classical'' version used by HHR.

A simple but crucial point is that the slice filtration restricts to the Postnikov filtration on underlying spectra, and thus constitutes an equivariant generalization of the Postnikov filtration---one which is however quite different from the Postnikov $t$-structure on $G$-spectra, as it mixes in more of the representation theory of $G$. In 2018, Hill asked what happens if, in the BMS construction, one replaces the Postnikov filtration with the slice filtration. This gives a filtration which is sensitive to the genuine structure of $\THH$ but not its cyclotomic structure, so is in some sense intermediary between the BMS filtration and the cyclotomic filtration constructed by Antieau-Nikolaus \cite{TCart}.

In this paper we carry out the local calculation needed to answer Hill's question, identifying the slice filtration on $\THH(R;\Z_p)$ for a perfectoid ring $R$. On the arithmetic side, we present evidence that this is strongly related to $q$-divided powers. Before stating the results, let us say more about the problem from a homotopical point of view.

Our understanding of the slice filtration has come a long way since \cite{HHR}, but is still in its infancy. In particular, most slice computations to date have been of spectra closely related to $\MU_\R$, or of Mackey functors; $\THH$ is of a quite different flavor than these, yet is still a very reasonable spectrum. It is fair to be skeptical that methods in the highly specialized setting of cyclotomic spectra will work for general $G$-spectra, but aside from the calculational foothold afforded by cyclotomicity, our arguments only rely on connectivity of geometric fixed points and the Segal conjecture. Thus, while our investigations were motivated by arithmetic considerations, we hope that they will shed light on the slice filtration in general.

One novelty is that this is the first time the slice filtration has been considered for a compact Lie group (which we define so that it restricts to the slice filtration on all finite subgroups). This causes some peculiarities: for example, we no longer get periodicity with respect to the regular representation. However, there is a good replacement: writing $\T$ for the circle group, let $\lambda^i$ denote the one-dimensional complex $\T$-representation in which $z\in\T$ acts as multiplication by $z^i$. Then set
\begin{align*}
  [n]_\lambda &= \lambda^0 \oplus \dotsb \oplus \lambda^{n-1}\\
  \{n\}_\lambda &= \lambda^1 \oplus \dotsb \oplus \lambda^n
\end{align*}
A standard decomposition using roots of unity shows that $[n]_\lambda$ restricts to the complex regular representation of $C_n$, while $\{n\}_\lambda$ restricts to the \emph{reduced} complex regular representation of $C_{n+1}$. The $\{n\}_\lambda$ representations also appear when calculating the $K$-theory of truncated polynomial algebras \cite{HMCyclicPolytopes,SpeirsTrunc} or of coordinate axes \cite{HesselholtAxes,SpeirsAxes}, and many of the formulas there bear a striking resemblance to ours.

To state the main theorem, we require some further notation. Let $\m W(R)$ denote the Mackey functor of $p$-typical Witt vectors of $R$, and let $\tr_{C_n} \m W(R)$ be the sub-Mackey functor generated under transfers by restriction to $C_{p^{v_p(n)}}$. Quotients of Mackey functors are to be interpreted levelwise. Finally, we will assume for the rest of the Introduction that $R$ is a $\Zpcycl=\Z_p[\zeta_{p^\infty}]^\wedge_p$-algebra; the results are valid for any perfectoid ring, but the meaning of the $q$-analogues needs to be clarified. Recall that $\Ainf(\Zpcycl)=\Z_p[q^{1/p^\infty}]^\wedge_{(p,q-1)}$.

\begin{theorem}
\label{thm:main-1}
Let $R$ be a perfectoid $\Zpcycl$-algebra. The slice covers of $\THH(R;\Z_p)$ are given by
\begin{align*}
  \regconn{2n}\THH(R;\Z_p) &= \Sigma^{[n]_\lambda} \THH(R;\Z_p).\\
  \intertext{The slices are given by}
  \regslice{2n}\THH(R;\Z_p)
    &= \Sigma^{\{n\}_\lambda} \tr_{C_n}\m W(R)\\
    &= \Sigma^{[n]_\lambda} \m W(R)/[pn]_{q^{1/p}}.
\end{align*}
\end{theorem}
For comparison, Ullman \cite[Theorem 5.1]{Ullman} shows that $\regconn{2n} \ku_G=\Sigma^{[n]_\lambda} \ku_G$ (for $G$ finite cyclic), so this is a reasonable answer. As a $C_n$-spectrum the representation sphere $S^{[n]_\lambda}$ is just the norm $N_e^{C_n} S^2$, so we can roughly think of the above filtration as being generated by the \emph{norms} of the B\"okstedt generator $\sigma$. We do not know whether either of the two expressions for $\regslice{2n}\THH(R;\Z_p)$ is more natural.

\begin{remark}
For a thorough comparison with \cite{BMS2} and prismatic cohomology, one should additionally compute the slice filtration on the $\T$-spectra $\THH(R;\Z_p)^{E\T_+}$, $\widetilde{E\T}\tnsr \THH(R;\Z_p)^{E\T_+}$, and $\widetilde{E\T}\tnsr\THH(R;\Z_p)$. We will address this in future work.
\end{remark}

As the Theorem indicates, in order to work with the slice filtration it is necessary to have a handle on the $\RO(\T)$-graded homotopy $\mpi_\rog\THH(R;\Z_p)$. For virtual representations of the form $\rog=*-V$ with $*\in\Z$ and $V$ an actual representation, these groups come up when studying $\THH$ of monoid algebras, and can already be found in \cite[Proposition 9.1]{HMFinite}. However, the slice filtration tends to demand study of the degrees $\rog=*+V$. Gerhardt \cite{TeenaTR} computed $\pi_\rog\TR^n(\F_p)$ additively for general $\rog\in\RU(\T)$ (i.e., even-dimensional virtual representations), and Angeltveit-Gerhardt \cite{ROS1TR} extended this to all $\rog\in\RO(\T)$ (as well as to $\THH$ of $\Z$ and the Adams summand $\ell$). Their method is an $\RO(\T)$-graded version of the isotropy separation sequence, which they slickly package as the ``homotopy orbits to $\TR$ spectral sequence'' (HOTRSS).

In the course of applying their method to the slice filtration and generalizing it to perfectoid rings, we discovered several enhancements, incorporating recent advances in equivariant homotopy theory.
\begin{enumerate}
\item As we learned from work of Zeng \cite[\sec6]{Zeng}, the isotropy separation sequence can be packaged using the so-called \emph{gold elements} $a_{\lambda_i},\,u_{\lambda_j}$ of equivariant homotopy theory \cite[\sec3]{HHR_KR}. While this is in some sense just a change of notation, it makes the calculations considerably more transparent, and makes it easy to track the multiplicative and Mackey structure, which are important for us.

\item As we learned from Hill, using Nikolaus-Scholze/BMS techniques, one can work with $\TF$ instead of $\TR^n$. This greatly simplifies calculations by removing a lot of distracting torsion, which can be put back in later (if desired). For example, there is a sharp distinction between even- and odd-dimensional $\TF$ groups, but \emph{both} of these contribute to the even-dimensional $\TR^n$ groups studied by Gerhardt.

\item In order to generalize the method to perfectoid rings, our key lemma is a $q$-deformation of the \emph{gold relation} of \cite[Lemma 3.6(vii)]{HHR_KR}, describing the interaction of the aforementioned gold elements $a_{\lambda_i},u_{\lambda_j}$. Our ``$q$-gold relation'' (Lemma \ref{lem:q-au}) says that, for $0\le i < j$,
\begin{align*}
  \sigma a_{\lambda_i} &= [p^{i+1}]_{q^{1/p}} u_{\lambda_i}\\
  a_{\lambda_j} u_{\lambda_i} &= \phi^{i+1}([p^{j-i}]_{q^{1/p}}) a_{\lambda_i} u_{\lambda_j}.
\end{align*}
The case of a torsionfree perfectoid ring is in fact easier than the case of $\F_p$ (except notationally), since $[p]_{q^{1/p}}$ and $[p]_q$ do not interfere with one another, which removes another source of torsion.
\end{enumerate}

The HOTRSS played a central role in our early investigations; in the final product, we have reduced to considering a few very nice virtual representations, which can be computed in an ad hoc way. Thus, we will use Angeltveit-Gerhardt's insights to derive the $q$-gold relation, but then use cell structures to carry out the actual calculations, avoiding the HOTRSS. In \cite{SulROGTHH}, we will deploy the HOTRSS (or rather the HOTFSS) to compute the entire $\RO(\T)$-graded ring $\pi_\rog\TF(R;\Z_p)$.


The $q$-gold relation together with some knowledge of $\pi_\rog\TF(R;\Z_p)$ allows us to read off the effect of the slice filtration on homotopy. We describe the filtration on both $\pi_{2i}\TF(R;\Z_p)$ and $\pi_{[i]_\lambda}\TF(R;\Z_p)$, since Theorem \ref{thm:main-1} as well as the answer for $\pi_{2i}$ make $\pi_{[i]_\lambda}$ seem more natural.

\begin{theorem}
\label{thm:main-2}
The slice filtration takes the following form on homotopy. When $j\le0$ or $i=0$, $\Fil^{2j}_\rmS\pi_{[i]_\lambda}\TF(R;\Z_p)$ is all of $\pi_{[i]_\lambda}\TF(R;\Z_p)$. Otherwise, $\Fil^{2j}_\rmS\pi_{[i]_\lambda}\TF(R;\Z_p)$ is generated by
\[ \frac{[p(i+j-1)]_{q^{1/p}}!}{[p(i-1)]_{q^{1/p}}!}. \]
When $j\le0$ or $i=0$, $\Fil^{2j}_\rmS\pi_{2i}\TF(R;\Z_p)$ is all of $\pi_{2i}\TF(R;\Z_p)$. Otherwise, $\Fil^{2j}_\rmS\pi_{2i}\TF(R;\Z_p)$ is generated by
\[ \frac{[p(i+j-1)]_{q^{1/p}}!}{[p^r]_{q^{1/p}}^{i-1} \phi^r\left(\left[\lf\tfrac{i+j-1}{p^{r-1}}\rf\right]_{q^{1/p}}!\right)}, \]
where $r=\lc\log_p\left(\frac{i+j}i\right)\rc$.

In particular, taking $i=1$ in either case gives
\begin{align*}
  \Fil^{2j}_\rmS\pi_2 \TF(R;\Z_p)
    &= ([pj]_{q^{1/p}}!)\\
    &= ([p]_{q^{1/p}}^j [j]_q!).
\end{align*}
\end{theorem}

\begin{remark}
The arithmetic significance of this filtration is currently mysterious, but we will share some speculation. As explained in \cite[\sec17.5.3]{HillHandbook}, the slice filtration is the universal filtration such that the equivariant \emph{norm} functors scale filtration by the index of the subgroup. It is not clear what this means in the case of a compact Lie group, but a good place to start is the Witt vector norm maps
\[ W_n(R) \morph^N W_{n+1}(R); \]
these are natural from the point of view of equivariant homotopy, but have received little attention in number theory (aside from the Teichm\"uller lift, which is a special case). The description of the Witt vector norm is due to Angeltveit and Borger \cite{AngeltveitNorm}, but they are working with $W(R)$ rather than $\Ainf(R)$; although the norm maps can be lifted to $\Ainf(R)$, it is not clear if there is a canonical way to do so. However, a preferred lift is available in the $q$-crystalline case: the $q$-Pochhammer symbols
\[ (x,-y;q)_n \defeq (x-y)(x-qy)\dotsm(x-q^{n-1}y) \]
are lifts of $N(x-y)$, at least for $x$ and $y$ of rank one (Proposition \ref{prop:norms-q}). In particular, our filtration on $\pi_2\TF(R;\Z_p)$ should be compared with \cite[Proposition 4.9]{AClBTrace}. We discovered this connection as the paper was being finalized, and have not yet really explored it.
\end{remark}

We can also understand this filtration via the regular slice spectral sequence (RSSS). Here, for the first time, we see different behavior depending on the nature of $R$. When $R$ is $p$-torsionfree, the $E_2$ page is concentrated in even degrees, so the RSSS collapses at $E_2$. However, when $R$ is a perfect $\F_p$-algebra, there are differentials on every page arising from the ``collision'' of $\xi$ and $\phi(\xi)$ ($[p]_{q^{1/p}}$ and $[p]_q$). For a Mackey functor $\m M$, we let $\Phi^{C_n}\m M$ denote the cokernel of $\tr_{C_n}\m M\to \m M$.

\begin{theorem}
\label{thm:main-3}
The homotopy Mackey functors of the slices are given in even degrees by
\[
  \mpi_{2i}\regslice{2n} \gT =
  \begin{cases}
    \m W & 0=i=n\\
    \m R & 0<i=n\\
    \Phi^{C_{p^m}}\m W / [p^{h+1}]_{q^{1/p}} & 0< i < n
  \end{cases}
\]
where $\m R$ is the constant Mackey functor on $R$, and
\[
  m = \lc\log_p(n/i)\rc-1,
  \quad
  h=\begin{cases}
    \min\{v_p(n),\lf\log_p(n/i)\rf\} & n/i\text{ not a power of }p\\
    \lf\log_p(n/i)\rf & n/i\text{ a power of }p.
  \end{cases}
\]

If $R$ is $p$-torsionfree, then these are the only non-vanishing homotopy Mackey functors. If $R$ is a perfect $\F_p$-algebra, then
\[
  \mpi_{2i+1}\regslice{2n}\gT =
  \begin{cases}
    \tr_{C_{p^{m+h+1}}} \Phi^{C_{p^m}} \m W & n/i\text{ not a power of }p\\
    \tr_{C_{p^{m+h+1}}} \Phi^{C_{p^{m+1}}} \m W & n/i\text{ a power of }p.
  \end{cases}
\]
\end{theorem}

\begin{proposition}
\label{thm:main-4}
The homotopy Mackey functors $\mpi_{[i]_\lambda}$ of the slices are
\[
  \mpi_{[i]_\lambda}\regslice{2n} \gT =
  \begin{cases}
    \m W & 0=i=n\\
    \m W/[pn]_{q^{1/p}} & 0<i=n\\
    \Phi^{C_{p^{\ell(i,n)}}}\m W / [pn]_{q^{1/p}} & 0 < i < n
  \end{cases}
\]
where $\ell(i,n)=\max\{v_p(i),\dotsc,v_p(n-1)\}=\min\{r \mid \lc n/p^r \rc = \lc i/p^r \rc\}$.
\end{proposition}

We provide several charts illustrating these filtrations at the end of \sec\ref{sub:rsss}. Although the RSSS for a perfect $\F_p$-algebra is very complicated, the $E_\infty$ page can be inferred from Theorem \ref{thm:main-2}, since we have explicitly identified the slice tower.

\begin{corollary}
\label{cor:main-5}
Let $R$ be a perfect $\F_p$-algebra, and define $h=h(n,i)$ as in Theorem \ref{thm:main-2}. The entry on the $E_\infty$ page of the RSSS corresponding to $\mpi_{2i}\regslice{2n}\THH(R)$ is $\Phi^{C_{p^{f+1}}} \m W/p^{h(n,i)+1}$, where $f=\sum_{i\le m<n} (h(m,i)+1)$.
\end{corollary}

\subsection{\tops{$q$}{q}-analogues and \tops{$p$}{p}-typification}
The above results require a great deal of preliminaries to state, and some ugly (but straightforward) calculations to prove. Our conceptual results are much simpler to explain. A basic combinatorial problem is to determine the sequence of $p$-adic valuations $(v_p(1),v_p(2),\dotsc,v_p(n))$. The answer is that there are $\lf\tfrac n{p^k}\rf$ multiples of $p^k$ in the set $\{1,\dotsc,n\}$. Three instances where this problem, and hence the expression $\lf\frac n{p^k}\rf$, arise are:
\begin{enumerate}
\renewcommand{\labelenumi}{(\alph{enumi})}
\item the $p$-adic valuation of a factorial, and the $q$-analogue of this problem \cite[Lemma 4.8]{AClBTrace};

\item the decomposition of big Witt vectors in terms of $p$-typical Witt vectors;

\item the $p$-typical decomposition of the representation $\{n\}_\lambda$.
\end{enumerate}
A similar expression appears in the work of Hill-Yarnall \cite{NewSlices} characterizing the slice filtration in terms of the Postnikov filtration. In our case, their result says that
\begin{enumerate}
\renewcommand{\labelenumi}{(\alph{enumi})}
\setcounter{enumi}3

\item a $\T$-spectrum $X$ is slice $n$-connective if and only if the geometric fixed points $X^{\Phi C_{p^k}}$ are $\lc\frac n{p^k}\rc$-connective (in the ordinary sense) for all cyclic subgroups $C_{p^k}\le\T$.
\end{enumerate}
(This discrepancy between floor and ceiling functions is a persistent phenomenon, and parallels the discrepancy between the representations $\{n\}_\lambda$ and $[n]_\lambda$. But they are of course very closely related, and things ultimately work out in our favor.\footnote{One could avoid this discrepancy by using the classical slice filtration, but the regular slice filtration is more natural.})

A second overall theme is that of $q$-analogues. We have intentionally chosen the notation $[n]_\lambda$ to emphasize that these representations are $\lambda$-analogues, and in fact the connection between (c) and (a) is most clearly seen when considering $q$-factorials. We also argue over the course of \sec\ref{sec:bg-arith} that $q$-analogues (and prisms) are natural from the point of view of equivariant homotopy theory.

The point of the paper is to connect these varied occurrences. Since it is easy to take geometric fixed points of cyclotomic spectra, the slice filtration is essentially determined by combinatorics. This allows us to relate (d) to (c) and prove Theorem \ref{thm:main-1}. To relate (c) to (a), we study the $\RO(\T)$-graded homotopy Mackey functors of $\THH$, where we find that the \emph{$q$-gold relation} implements a close dictionary between representations and $q$-analogues. Theorem \ref{thm:main-2} through Corollary \ref{cor:main-5} then follow fairly quickly from Theorem \ref{thm:main-1} and some unpleasant algebra.

We warn the reader that we have not yet incorporated example (b). We consider $\THH$ as a \emph{$p$-typical} cyclotomic spectrum here, although our formulas seem more natural when it is viewed as an ``integral'' cyclotomic spectrum. (So we are really working with the group $C_{p^\infty}$ rather than $\T$.) Many formulas are stated integrally, but interpreted and proven $p$-typically. We expect that our results hold in the integral case, but we have not yet checked this carefully. This would partly clarify the relation to the $K$-theory of truncated polynomial algebras / coordinate axes, which are given in terms of big de Rham-Witt forms.

\subsection{Overview}
Part \ref{part:background} collects the needed technical background and notation. Unfortunately, there is a lot; we have tried to make the paper accessible (and self-contained) to both arithmetic geometers unfamiliar with the pre-Nikolaus-Scholze formulation of cyclotomic spectra, and to homotopy theorists who are interested in slice computations but who have not studied \cite{BMS2} extensively. Homotopical background is presented in \sec\ref{sec:bg-htpy}, and arithmetic background is presented in \sec\ref{sec:bg-arith}. We hope that collecting all of this information in one place will be helpful to the field.

Our notation is recapitulated at the beginning of Part \ref{part:results}, so one may begin there and refer back as needed. However, even experts should read \sec\ref{sub:q-an}; while there are no really new technical results there, it contains the key observations at the conceptual heart of this paper. There is also a little bit of new material in \sec\ref{sub:prisms}.

In \sec\ref{sec:rog}, we compute certain $\RO(\T)$-graded homotopy Mackey functors of $\THH$, taking a geodesic route to the computations which are needed for describing the slice filtration. The main theorems are proved in \sec\ref{sec:slice}. In \sec\ref{sec:future}, we propose some natural followup questions.

\subsection{Acknowledgements}
We are grateful to Mike Hill for suggesting this problem; to Andrew Blumberg for supervision and guidance; to Aaron Royer for patiently fielding many questions about equivariant stable homotopy theory; to Teena Gerhardt for clarifying a confusion about the Tate spectral sequence; to Ben Antieau and Thomas Nikolaus for help sorting out a confusion about the $A$-algebra structure on $\TR^n$; and to Dylan Wilson for the crucial suggestion to use the gold elements. Figures \ref{fig:q-lgndr-2} and \ref{fig:q-lgndr-3} were originally created for Kate Stange's Number Theory and Friends group as a part of ICERM's Illustrating Mathematics semester. Additionally, this paper owes a tremendous intellectual debt to \cite{ROS1TR} and \cite{Zeng}.

Preliminary versions of the results in this paper appeared as part of the author's PhD thesis at the University of Texas at Austin.

\newpage
\part{Background}
\label{part:background}
\section{Homotopical background}
\label{sec:bg-htpy}

In this section we give a crash course in equivariant stable homotopy theory, with an eye towards computational aspects. We do not intend to give a complete or fully rigorous account of this subject---which would be almost impossible in the given space---but simply to recall the main points and indicate some of the pitfalls. For a comprehensive introduction that covers everything we need here (except for \sec\sec\ref{sub:thh-friends} and \ref{sub:T-repns}), we strongly recommend \cite{HillHandbook}. Another course account is \cite{BlumbergNotes}; a modern reference is \sec2 and Appendices A and B of \cite{HHR}; the original source is \cite{LMS}; and \cite{tomDieck1979} is also very nice, for example arithmetic geometers will enjoy the material on $\lambda$-rings.

In \sec\ref{sub:equivariant}, we recall the general definitions of equivariant stable homotopy theory. The main point is to explain the difference between ``naive'' and ``genuine'' equivariant homotopy theory, and the interactions between the four different types of fixed points in the genuine stable theory. Finally, we introduce the cases of interest to us, the homotopy theories of cyclonic and cyclotomic spectra.

\sec\ref{sub:thh-friends} documents the plethora of spectra that can be obtained from $\THH$.

In \sec\ref{sub:mackey} and \sec\ref{sub:rog-grading}, we recall the definition of Mackey functors and of the $RO(G)$-graded homotopy Mackey functor $\mpi_\rog X$ of a $G$-spectrum $X$. In particular, we discuss the equivariant Euler classes $a_V$ and their relation to isotropy separation squares. As we are interested in the groups $\T$ and $C_{p^n}$, we review in \sec\ref{sub:T-repns} their representation theory as well as cell structures for their representation spheres. Lastly, in \sec\ref{sub:slice-filtration} we introduce the regular slice filtration.


\subsection{Equivariant stable homotopy theory}
\label{sub:equivariant}

At the beginning of this project, the author found equivariant homotopy theory extremely confusing. Now, the author finds equivariant homotopy theory only very confusing. We have tried to explain things in the way that made the subject make sense to us.

We start in \sec\ref{subsub:unstable} with the unstable theory, but our only interest in this is on the way to the stable theory. What we need to explain is the difference between ``naive'' and ``genuine'' equivariance. This results in there being \emph{two} types of fixed points in equivariant unstable homotopy theory: the \emph{homotopy fixed points} $X^{hG}$ and the \emph{categorical fixed points} $X^G$.

There are then \emph{four} types of fixed points once we pass to equivariant \emph{stable} homotopy theory:
\begin{itemize}
\item the \emph{homotopy fixed points} $X^{hG}$;
\item the \emph{categorical fixed points} $X^G$;
\item the \emph{Tate fixed points} $X^{tG}$;
\item the \emph{geometric fixed points} $X^{\Phi G}$.
\end{itemize}
The categorical and geometric fixed points arise in passing from naive to genuine equivariant homotopy theory, while the Tate and geometric fixed points have to do with passing from unstable to stable homotopy theory. Categorical fixed points are to homotopy fixed points as geometric fixed points are to Tate fixed points, in a sense made precise by the isotropy separation square \eqref{eq:isotropy-separation-Cn}. Ultimately, the difference between the unstable and stable cases comes from the existence of the \emph{trace} (often called the norm).

\subsubsection{The unstable theory}
\label{subsub:unstable}
Let $\Spc$ denote the $\infty$-category of spaces, and let $\Top$ denote the (a) \emph{topological category} of spaces. Given a compact Lie group $G$, we wish to define the $\infty$-category $G\Spc$ of $G$-spaces. As we will see, there are several different options for what this might mean. We will examine how to do this using the point-set model, then explain what this means $\infty$-categorically.

\begin{definition}
A \emph{$G$-topological space} is a topological space $X$ equipped with a continuous left action of $G$. The \emph{topological category} of $G$-topological spaces is denoted $G\Top$.
\end{definition}

If $G$ is finite, then we may identify $G\Top$ with $\Top^{BG}$. \emph{However, it is very much not the case that $G\Spc = \Spc^{BG}$.}

If we want to do homotopy theory, it is not enough to specify the category of $G$-topological spaces: we must also specify the weak equivalences. Here, there is a choice to be made. Given a $G$-topological space $X$ and a subgroup $H\le G$, write
\[ X^H = \{x\in X \mid h\cdot x = x \quad \forall h\in H\} \]
for the subspace of $H$-fixed points, and observe that the functor $(-)^H$ is representable by $G/H$.

\begin{definition}
A map $X \morph^f Y$ of $G$-topological spaces is called
\begin{itemize}
\item a \emph{naive weak equivalence} if the underlying map $X^e \morph^{f^e} Y^e$ is a weak equivalence;

\item a \emph{genuine weak equivalence} if $X^H \morph^{f^H} Y^H$ is a weak equivalence for all subgroups $H\le G$.
\end{itemize}
More generally, given a family $\cF$ of subgroups of $G$ closed under conjugation and passage to subgroups, an \emph{$\cF$-weak equivalence} is a map $f$ such that $f^H$ is a weak equivalence for all $H\in\cF$.

We write $G\Spc_\text{naive}$, $G\Spc$, $G\Spc_{\cF}$ for the $\infty$-categories obtained by localizing $G\Top$ with respect to the naive, genuine, or $\cF$-weak equivalences; $G\Spc_\text{naive} = G\Spc_{\{1\}}$ and $G\Spc = G\Spc_{\{\text{all}\}}$. We call these the $\infty$-categories of \emph{naive $G$-spaces}, \emph{genuine $G$-spaces}, and \emph{$\cF$-genuine $G$-spaces}.
\end{definition}

In other words, each subgroup $H$ of $G$ gives a functor $G\Top \morph^{(-)^H} \Top$, and by specifying $\cF$ we are deciding which of these should be homotopically meaningful, descending to a functor $G\Spc_{\cF} \to \Spc$. It turns out that this is essentially all of the homotopical data contained in a $G$-space.

\begin{definition}
Let $G$ be a finite group. The \emph{orbit category} $\O(G)$ of $G$ is the full subcategory of $\Spc^{BG}$ spanned by the nonempty transitive $G$-sets. $\O(G)$ is equivalent to its full subcategory $\{G/H\}_{H\le G}$.
\end{definition}

\begin{remark}
A family $\cF$ of subgroups of $G$ closed under conjugation and passage to subgroups is essentially the same thing as a downwards-closed subcategory of $\O(G)$.
\end{remark}

\begin{theorem}[{\cite{Elmendorf}}]
The restricted Yoneda embedding \[ G\Top \to \Top^{\O(G)^\op} \] gives an equivalence of $\infty$-categories
\begin{align*}
  G\Spc_\mathrm{naive} &= \Spc^{BG}\\
  G\Spc &= \Spc^{\O(G)^\op}\\
  G\Spc_{\cF} &= \Spc^{\cF^\op}
\end{align*}
\end{theorem}

In particular, $G/H$ represents the functor $G\Spc \morph^{(-)^H} \Spc$ (now basically by fiat). In fact, this factors through $\Spc^{\Aut(G/H)} = \Spc^{W_G(H)}$, where $W_G(H) = N_G(H)/H$ is the Weyl group.

In summary, a $G$-space has \emph{two} different notions of fixed point:
\begin{itemize}
\item the \emph{homotopy fixed points} $X^{hH}$, obtained by restricting $X$ to $\Sp^{BH}$ and then right Kan extending along $BH\to *$;

\item the \emph{categorical fixed points} $X^H$.
\end{itemize}

\subsubsection{The stable theory}
\label{subsub:stable}
The first guess for the stable theory is to simply stabilize the unstable theory from the previous subsection. This gives two options:
\begin{itemize}
\item the stabilization $\Sp(\Spc^{BG})$ of naive $G$-spaces is equivalent to the functor category $\Sp^{BG}$. These are variously called \emph{Borel $G$-spectra}, \emph{coarse $G$-spectra}, \emph{FS-$G$-spectra}, \emph{naive $G$-spectra}, or \emph{doubly naive $G$-spectra} in the literature. We shall use the term \emph{naive $G$-spectra}.

\item the stabilization $\Sp(\Spc^{\O(G)^\op})$ of genuine $G$-spaces is equivalent to the functor category $\Sp^{\mathcal \O(G)^\op}$. These are sometimes called \emph{naive $G$-spectra} in the literature (clashing with the above); we shall use the nonstandard term \emph{ersatz $G$-spectra}. One can of course replace $\O(G)$ with $\cF$.
\end{itemize}

Naive $G$-spectra are useful, but ersatz $G$-spectra, as the name suggests, are the wrong notion to consider. The reason they are wrong is the same reason we get more fixed point functors.

If $K \to H$ is an inclusion (or subconjugacy relation) of subgroups of $G$, then there is a restriction map $X^H \to X^K$ between the fixed points. In the unstable setting, this is the only relation we should expect between fixed point spaces, in general. But in the presence of addition, there is an easy way to produce $H$-fixed points from $K$-fixed points: simply sum over conjugates, indexed by $H/K$; this is known as a \emph{transfer} map. Thus, in the stable setting, we should require transfers $\tr^H_K \colon X^K \to X^H$ in addition to restriction maps $\res^H_K \colon X^H \to X^K$. These are not present in $\Sp^{\O(G)^\op}$.

Instead, recalling that $\O(G)$ was defined as the subcategory of $\Spc^{BG}$ spanned by the nonempty transitive $G$-sets (equivalently, by the orbits $G/H$), we define $\mathcal A_G$ to be the subcategory of $\Sp^{BG}$ spanned by $X_+$, where $X$ is a nonempty transitive $G$-set, equivalently by the orbits $G/H_+$. $\mathcal A_G$ is called the \emph{Burnside $\infty$-category} of $G$; the classical (or algebraic) Burnside category is $\mathcal B_G = \pi_0\mathcal A_G$. A \emph{Mackey functor} is an additive functor $\m M\colon \mathcal B_G^\op \to \Ab$; Mackey functors are reviewed in \sec\ref{sub:mackey}.

The category of enriched functors $\mathcal A(G)^\op \to \Sp$ turns out to give the correct notion of genuine $G$-spectrum: that is, genuine $G$-spectra are \emph{spectral Mackey functors}. This is revisionist: traditionally, genuine $G$-spectra are defined by starting with genuine $G$-spaces and inverting all \emph{representation} spheres (ersatz $G$-spectra only invert the ordinary spheres). The spectral Mackey functor formulation is due to Guillou-May \cite{GuillouMay}, and was used by Barwick \cite{BarwickMackeyI} to provide a fully $\infty$-categorical treatment of $G$-spectra. Kaledin also has a homological analogue \cite{KaledinMackey}.

Summing over conjugates provides easy examples of fixed points; we would like to distill the interesting ones. The \emph{Tate fixed point spectrum} is defined as the cofiber of the \emph{trace map} $T$ from the homotopy orbits to the homotopy fixed points:
\[ X_{hG} \morph^T X^{hG} \morph X^{tG} \]
which on $\pi_0$ is $T(x) = \sum_{g\in G} x$. The theory of the Tate spectrum is due to Greenlees and May \cite{GMTate}.

\begin{remark}
$T$ is often written as $N$, and called the \emph{norm} map; however, Hill has pointed out that that is more properly reserved for the multiplicative notion.
\end{remark}

\begin{example}
Let $M$ be the free $\Z$-module $\Z\<x,y\>$, and let $C_2$ act on $M^{\tnsr 2}$ in the obvious way. Then both $x^2$ and $xy + yx$ are in $\H^0(C_2, M)$, but only $x^2$ is nonzero in $\tate^0(C_2, M) \isom \Z/2\<x^2,y^2\>$.
\end{example}

The \emph{geometric fixed points} $X^{\Phi G}$ are a ``genuine'' version of the Tate fixed points\footnote{More accurately of the \emph{generalized} Tate construction, c.f.\ \cite{AMGRgenuine}.} $X^{tG}$. To define them, let $\cF$ be a family of subgroups of $G$, and let $E\cF$ be a $G$-space with
\[
  (E\cF)^H =
  \begin{cases}
  \emptyset & H\notin\cF\\
  * & H\in\cF
  \end{cases}
\]
(Such a $G$-space exists by Elmendorf's theorem, and $E\cF$ is in fact determined by this condition.) Then let $\widetilde{E\cF}$ be the space given by the cofiber sequence
\[ E\cF_+ \to S^0 \to \widetilde{E\cF}. \]
For any spectrum $X$, we get a diagram
\begin{equation}
\label{eq:isotropy-separation}
\vcenter{\xymatrix{
  E\cF_+\tnsr X \ar[r] \ar[d] & X \ar[r] \ar[d] & \widetilde{E\cF}\tnsr X \ar[d]\\
  E\cF_+\tnsr X^{E\cF_+} \ar[r] & X^{E\cF_+} \ar[r] & \widetilde{E\cF}\tnsr X^{E\cF_+}
}}
\end{equation}
with the following properties:
\begin{itemize}
\item it can be shown that the left vertical map is an equivalence, so the right square is a pullback/pushout;

\item the terms in the left column are concentrated on the subgroups in $\cF$;

\item the terms in the right column vanish on the subgroups in $\cF$;

\item the bottom row depends only on the restriction of $X$ to $\cF$.
\end{itemize}

There are two cases of particular interest. When $\cF=\{e\}$, we write $EG$ for $E\cF$, and set
\begin{align*}
  X_h &\defeq EG_+\tnsr X\\
  X^h &\defeq X^{EG_+}\\
  X^t &\defeq \widetilde{EG}\tnsr X^{EG_+}
\end{align*}
This is because one can show that $(X_h)^H = X_{hH}$ for subgroups $H\le G$, where the right-hand side is a priori defined as orbits for the restriction of $X$ to a Borel spectrum $X\in\Sp^{BH}$, and similarly for $X^h$ and $X^t$.

The other particular case is when $\cF=\cP$ is the family of all \emph{proper} subgroups. In this case, we define the \emph{geometric fixed points} $X^{\Phi G}$ by
\[ X^{\Phi G} \defeq (\widetilde{E\cP} \tnsr X^{E\cP_+})^G. \]
This has the effect of destroying all transfers from subgroups, deleting the ``cheap'' fixed points. For a subgroup $H\le G$, we define $X^{\Phi H}$ by first restricting $X$ to $H\Sp$, then applying the above construction. The formal properties of the different fixed-point functors are:
\begin{align*}
  \Omega^\infty(X^H) &= (\Omega^\infty X)^H\\
  \Sigma^\infty(X^H) &= (\Sigma^\infty X)^{\Phi H}\\
  (X\tnsr Y)^{\Phi H} &= X^{\Phi H} \tnsr Y^{\Phi H}
\end{align*}
In a slogan, the spectra $X^{\Phi H}$ sort fixed points according to their stabilizers, while the spectra $X^H$ mix them all together.

\begin{remark}
Using isotropy separation inductively, one can describe $G$-spectra in terms of their geometric fixed points rather than their categorical fixed points. In place of the diagram perspective of spectral Mackey functors, this describes $G$-spectra in terms of a \emph{stratification}, making it look somewhat like the category of contructible sheaves on some (mythical) stack. This is due to Glasman \cite{Glasman} and Ayala--Mazel-Gee--Rozenblyum \cite{AMGRgenuine}; an excellent exposition is \cite[\sec\sec1.1--1.2]{Wilson}.
\end{remark}

Note that when $G=C_p$, these two distinguished families coincide. Consequently, when $G$ is $C_n$ or $\T$, we can rewrite $(\widetilde{EG}\tnsr X^{EG_+})^G$ as $(X^{\Phi C_p})^{G/C_p}$. This gives a square
\begin{equation}
  \label{eq:isotropy-separation-Cn}
  \vcenter{\xymatrix{
    X_{hG} \ar[r] \ar@{=}[d] & X^G \ar[r] \ar[d] & (X^{\Phi C_p})^{G/C_p} \ar[d]\\
    X_{hG} \ar[r]_-T & X^{hG} \ar[r] & X^{tG}
  }}
\end{equation}
which is the basis for many THH calculations.

\begin{remark}
It may also be useful to consider the fiber of the \emph{vertical} maps in \eqref{eq:isotropy-separation-Cn}, which is $(X^{\widetilde{EG}})^G$. Our interest in this comes from the following observation. The map $\THH = \THH^{\Phi C_p} \morph^\varphi \THH^{tC_p}$ is a spectral version of the derived Frobenius $A \morph^\varphi A \mmod p \defeq A \tnsr^\L_{\Z} \F_p$. If $A$ is a perfectoid ring which is either $p$-torsion or $p$-torsionfree, containing an element $\pi$ such that $\pi^p = pu$ for a unit $u$ (we allow $\pi=0$), then
\[ A \morph^\pi A \morph^\varphi A \mmod p \]
is an exact triangle \cite[Lemma 3.10]{BMS1}. In general, the fiber of $A \morph^\varphi A \mmod p$, which is also the fiber of $W_2(A) \morph^F A$, ``contains all information about $p$-divided powers in $A$''. More generally, the kernel of Frobenius on $W(-)$ is the divided-power completion of $\G_a$ at the origin; this is at the basis of Drinfeld's approach to crystalline cohomology through stacks \cite{DrinfeldCrystallizationA1}.
\end{remark}

Now we define the homotopy theories of interest to us.

\begin{definition}
A \emph{cyclonic spectrum}\footnote{This terminology is due to Barwick-Glasman \cite{BGcyclonic}.} is a $\T$-spectrum genuine for the finite subgroups of $\T$. We denote the $\infty$-category of cyclonic spectra by $\CyclSp$.
\end{definition}

\begin{definition}
A \emph{cyclotomic spectrum} is a naive $\T$-spectrum $X$ together with $\T$-equivariant maps
\[ X \morph^{\varphi_p} X^{tC_p} \]
for all $p$, where $X^{tC_p}$ has the $\T\simeq\T/C_p$ action. These are \emph{not} required to be compatible for varying $p$. We denote the $\infty$-category of cyclotomic spectra by $\CycSp$. We also write $\CycSpp$ for the $\infty$-category of \emph{$p$-typical} cyclotomic spectra, where we only ask for a single $\varphi_p$.
\end{definition}

Classically \cite{HMcyclotomic}, a cyclotomic spectrum was defined as a cyclonic spectrum $X$ equipped with equivalences
\[ X \morph^\sim X^{\Phi C_p} \]
which now are required to be compatible. The homotopy theory of cyclotomic spectra was first constructed in \cite{BMCyclotomic}; the definition above is due to \cite{NikolausScholze}. In the bounded below case, this agrees with the classical notion.

\begin{definition}
A \emph{cyclotomic spectrum with Frobenius lifts} is a naive $\T$-spectrum $X$ together with compatible $\T$-equivariant maps
\[ X \morph^\psi X^{hC_p} \]
for all $p$. We denote the $\infty$-category of cyclotomic spectra with Frobenius lifts by $\CycFL$. We also write $\CycFLp$ for the $\infty$-category of \emph{$p$-typical} cyclotomic spectra with Frobenius lifts, where we only ask for a single $\psi_p$.
\end{definition}

There are forgetful functors
\[ \CycFL \to \CycSp \to \CyclSp \to \Sp^{B\T} \to \Sp. \]

\subsection{THH and friends}
\label{sub:thh-friends}

The author has found that the proliferation of T acronyms in this subject confuses a great number of people---arithmetic geometers and homotopy theorists alike---so in this section we document them all. (To keep the net confusion constant, we also suggest some new ones.) This is merely a collection of definitions; for an introduction to the subject we suggest \cite{KrauseNikolaus}, \cite{HNHandbook}; the modern reference is \cite{NikolausScholze}. The pre-Nikolaus-Scholze surveys \cite{MayTHH} and \cite{Madsen} are also highly recommended.

Recall that ordinary Hochschild homology $\HH(A/k)$ gives an object in $\Mod_k^{B\T}$\footnote{In fact, it even gives a cyclonic $k$-module, but we do not know how to access the fixed-point information in the absence of cyclotomicity.}. Topological Hochschild homology $\THH(A) \defeq \HH(A/\S)$ has more structure, and gives an object of $\CycSp$. To begin, we denote
\begin{alignat*}{2}
  \HC &\defeq \HH_{h\T} &\quad& \text{cyclic homology}\\
  \HC^- &\defeq \HH^{h\T} && \text{negative cyclic homology}\\
  \HP &\defeq \HH^{t\T} && \text{periodic cyclic homology}
\end{alignat*}
These are shown to be equivalent to the classical description in terms of bicomplexes in \cite{Hoyois}. Analogously, we make the following definitions in the topological case:
\begin{alignat*}{2}
  \TC^- &\defeq \THH^{h\T} &\quad& \text{topological negative cyclic homology}\\
  \TP &\defeq \THH^{t\T} && \text{topological periodic cyclic homology}
\end{alignat*}
See Remark \ref{rmk:TM} below. There are exact triangles
\begin{gather*}
  \Sigma\HC \to \HC^- \to \HP\\
  \Sigma\THH_{h\T} \to \TC^- \to \TP
\end{gather*}
The shift comes from working in the compact Lie case.

\begin{warning}
Despite the name, $\TP_*(A)$ is not periodic in general. But this is the case when $A$ lives over a single prime.
\end{warning}

The $\TR^n$ spectra are defined using the categorical fixed points:
\[ \TR^{n+1} \defeq \THH^{C_{p^n}} \quad\text{length } n+1 \text{ topological Restriction homology} \]
\begin{remark}
For any ring $A$, $\TR^{n+1}_0(A) = W_{n+1}(A)$ \cite[Theorem 3.3]{HMFinite}. 
\end{remark}
\begin{remark}
This is more properly called $p$-typical $\TR$, denoted $\TR^{n+1}(-;p)$, whereas ``big $\TR^{n+1}$'' is $\THH^{C_{n+1}}$. We will only use $p$-typical $\TR^{n+1}$ in this paper.

The numbering of $p$-typical $\TR^{n+1}$ is chosen to agree with Witt vectors: for any ring $A$, $\TR^{n+1}_0(A) = W_{n+1}(A)$. As explained in \cite[2.5]{Borger}, the mismatch in indexing is thus the number theorists' fault.
\end{remark}

The $\TR^n$ spectra are related by various maps mimicking the structure of Witt vectors:
\begin{align*}
  \TR^{n+1} &\morph^F \TR^n\\
  \TR^n &\morph^V \TR^{n+1}\\
  \TR^{n+1} &\morph^R \TR^n
\end{align*}
Here $F$ is the equivariant restriction, $V$ is the equivariant transfer, and $R$ comes from the cyclotomic structure. This is interesting: classically one thinks of the $R$ map on Witt vectors as the ``easy'' one and the $F$ map as the ``exotic'' one, whereas the opposite is true from the perspective of equivariant homotopy theory.

With these notations, the isotropy separation sequence \eqref{eq:isotropy-separation-Cn} takes the form
\[\xymatrix{
  \THH_{hC_{p^n}} \ar[r] \ar@{=}[d] & \TR^{n+1} \ar[r]^-R \ar[d] & \TR^n \ar[d]^-\varphi\\
  \THH_{hC_{p^n}} \ar[r] & \THH^{hC_{p^n}} \ar[r] & \THH^{tC_{p^n}}
}\]
We can thus express $\TR^{n+1}$ in Nikolaus-Scholze language as the iterated pullback
\[\xymatrix@1{
  \TR^{n+1} \ar[r] \ar[d] \pullback & \dots \ar[r] \ar[d] \pullback & \TR^3 \ar[r]^-R \ar[dd] \pullback & \TR^2 \ar[r]^-R \ar[d] \pullback \ar[r] & \TR^1 \ar[d]^-\varphi\\
  \vdots \ar[d] & \vdots \ar[d] && \THH^{hC_p} \ar[r]_-\can \ar[d]^-{\varphi^{hC_p}} & \THH^{tC_p}\\
  \vdots \ar[d] & \vdots \ar[d] & \THH^{hC_{p^2}} \ar[r]_-\can \ar[d] & \THH^{tC_{p^2}}\\
  \vdots \ar[d] & \vdots \ar[d] \ar[r] & \vdots\\
  \THH^{hC_{p^n}} \ar[r]_-\can & \THH^{tC_{p^n}}
}\]
This should be compared to the Cartier-Dieudonn\'e-Dwork lemma, and with Borger's perspective on Witt vectors \cite{Borger}.

\begin{remark}
Angeltveit shows there is also an $N$ map, reflecting the Tambara functor structure \cite{AngeltveitNorm}. We make some remarks about this in \sec\ref{sub:prisms}.
\end{remark}

We then define
\begin{alignat*}{2}
  \TR &\defeq \lim_{n,R} \TR^n &\quad& \text{topological Restriction homology}\\
  \TF &\defeq \lim_{n,F} \TR^n && \text{topological Frobenius homology}
\end{alignat*}

More conceptually, $\THH \mapsto \TR$ is the right adjoint to the forgetful functor $\CycFLp \to \CycSpp$, applied to $\THH$ \cite[Proposition 10.3]{KrauseNikolaus}. This is analogous to $W(R)$ being the cofree $\delta$-ring on $R$. Furthermore, $\TF = \Hom_\CyclSp(\S,\THH)$. At infinite level, the isotropy separation sequence \eqref{eq:isotropy-separation-Cn} becomes
\[\xymatrix{
  \Sigma\THH_{h\T} \ar[r] \ar@{=}[d] & \TF \ar[r]^-R \ar[d] & \TF \ar[d]^-\varphi\\
  \Sigma\THH_{h\T} \ar[r] & \TC^- \ar[r]_-\can & \TP
}\]

\begin{remark}
The upper-right copy of $\TF$ behaves differently from the upper-left copy, and is in some sense a Frobenius untwist of it. In calculations we have found it convenient to write the upper-right copy as $\TPhi$, but we fear that trying to introduce this notation would provoke outrage.
\end{remark}

\begin{remark}
It would be natural to define
\[ \TV = \colim_{n, V} \TR^n \quad \text{topological Verschiebung homology} \]
which is a topological analogue of the \emph{unipotent coWitt vectors} \cite[Chapitre II]{Fpdiv}. To our knowledge this has not yet been exploited.
\end{remark}

\begin{openproblem}
Come up with better names for $\TR$, $\TF$, and $\TV$.
\end{openproblem}

Finally, $\TC \defeq \Hom_\CycSp(\S,\THH)$ is obtained by trivializing all the cyclotomic structure. This definition is really a theorem, conjectured by Kaledin \cite{KaledinICM} and proven by Blumberg-Mandell \cite{BMCyclotomic}. There are various equivalent descriptions of this using the above spectra:
\begin{alignat*}{2}
  \TC &\defeq \Hom_\CycSp(\S, \THH) &\quad& \text{topological cyclic homology}\\
  &\phantom:= \fib(\TR \xra{1-F} \TR)\\
  &\phantom:= \fib(\TF \xra{1-R} \TF)\\
  &\phantom:= \fib(\TC^- \xra{\can-\varphi} \TP^\wedge) && \text{``Nikolaus-Scholze formula''}
\end{alignat*}

\begin{remark}
\label{rmk:TM}
It is really $\THH_{h\T}$ which deserves to be called topological cyclic homology, and denoted $\TC$. We have advertised the idea of calling $\Hom_\CycSp(\S,\THH)$ topological motivic cohomology ($\TM$); however, we shall not indulge that here.
\end{remark}

\subsection{Mackey functors}
\label{sub:mackey}

If $M$ is an abelian group acted on by $G$, there are several maps relating the fixed-point modules $M^H$ for subgroups $H\le G$.
\begin{itemize}
\item The Weyl group $W_G(H) = \Aut(G/H) = N_G(H)/H$ acts on each $M^H$.

\item If $K\le H$, there is a restriction map $\res^H_K \colon M^H \to M^K$.

\item If $K\le H$, there is a transfer map $\tr^H_K \colon M^H \to M^K$.
\end{itemize}

The notion of a Mackey functor axiomatizes this structure. In other words, a Mackey functor $\m M$ assigns to each subgroup $H\le G$ an abelian group $\m M(G/H)$, with an action of $W_G(H)$, along with restriction and transfer maps, satisfying appropriate compatibilities. We will not spell these out here; the most important one for us is that
\[ \res^H_K \tr^H_K(x) = \sum_{g\in W_H(K)} g\cdot x \]
A complete, explicit definition can be found in \cite[Definition 1.1.2]{Mazur}. More abstractly, a Mackey functor is an additive functor $\mathcal B_G^\op \to \Ab$. \emph{Mackey functors are to $G$-modules are to abelian groups as genuine $G$-spectra are to naive $G$-spectra are to spectra.}

\begin{example}
For any genuine $G$-spectrum $X$ and $n\in\Z$, the homotopy Mackey functor $\mpi_n(X)$ is defined by
\[ \mpi_n(X)(G/H) = [S^n \tnsr G/H_+, X]. \]
\end{example}

\begin{example}
The \emph{Burnside Mackey functor} $\m A$ is the representable functor $\mathcal B_G(-, G/G)$. For any finite group $G$, $\m A(X)$ is $\K_0$ of the category of finite $G$-sets. In fact, $\m A = \mpi_0\S$.
\end{example}

\begin{example}
Given a $G$-module $M$, the fixed point Mackey functor $\m M$ is defined by $\m M(G/H) = M^H$. Restrictions are inclusions of fixed points, and transfers are summations over cosets. This is a Mackey analogue of the right adjoint to the forgetful functor $G\Sp \to \Sp^{BG}$.
\end{example}

\begin{example}
Given a $G$-module $M$, the orbit Mackey functor $O(M)$ is defined as $O(M)(G/H) = M/H$, the orbit of $H \subset G$. Transfers are quotient maps, and restrictions are summations over representatives. This is a Mackey analogue of the left adjoint to the forgetful functor $G\Sp \to \Sp^{BG}$.
\end{example}

All of our $C_{p^n}$-representations will be restricted from $\T$-representations. Since $\T$ is connected, this means all the Weyl actions will be trivial, and the Mackey functor condition amounts to $\res^G_H\circ\tr^G_H = |G:H|$.

There is a closed symmetric monoidal structure on Mackey functors, which we do not discuss. The monoidal unit is $\m A$ (and not $\m \Z$). A Mackey functor is a $\m\Z$-module if and only if it satisfies the condition $\tr^H_K\res^H_K = |H:K|$ \cite[Proposition 16.3]{ZmodCrit}.

\begin{example}
For any ring $R$, the \emph{Witt Mackey functor} $\m W(R)$ is a $C_{p^n}$-Mackey functor (for any $n$), whose value on $C_{p^n}/C_{p^k}$ is $W_{k+1}(R)$. Restrictions are given by $F$ and transfers are given by $V$. In fact, $\m W(R) = \mpi_0\THH(R)$ \cite[Theorem 3.3]{HMFinite}. $\m W(R)$ should not be confused with the constant Mackey functor $\m{W(R)}$. If $R$ is an $\F_p$-algebra, then $VF = p$ and $\m W(R)$ is a $\m\Z$-module, but this is not true for general $R$.
\end{example}

A common way of describing a Mackey functor is by means of a \emph{Lewis diagram}. If $\m M$ is a Mackey functor, we draw the modules $\m M(G/H)$ with $\m M(G/G)$ on top and $\m M(G/e)$ on the bottom. Restrictions are indicated by maps going downward, and transfers by maps going upward. For example, a Lewis diagram for $C_p$ looks like this:
\[\xymatrix{
    {\m M(C_p/C_p)} \ar@/_1em/[d]_-{\res^{C_p}_e}\\
    {\m M(C_p/e)} \ar@/_1em/[u]_-{\tr^{C_p}_e} \ar@(dl,dr)_-{\text{Weyl group action}}
}\]

All of the Mackey functors we encounter will have trivial Weyl action.

\subsection{\texorpdfstring{$\RO(G)$}{RO(G)}-graded homotopy}
\label{sub:rog-grading}

Let $\alpha$ and $\beta$ be actual representations of $G$. If $X$ is a genuine $G$-spectrum, we define the \emph{$\RO(G)$-graded homotopy $\pi^H_{\alpha-\beta}(X)$ of $X$} as
\[ \pi^H_{\alpha-\beta}(X) = [S^\alpha \tnsr G/H_+, S^\beta \tnsr X] \]

\begin{warning}
This notation is abusive: it depends on the choice of $\alpha$ and $\beta$, not only on $\alpha - \beta$. This is discussed at length in \cite[\sec6]{Adams}. This is not an equivariant peculiarity; even for ordinary spectra, there is a sign involved in trying to identify $S^{|V|}$ with $S^V$ for an abstract real vector space $V$.

We address this by choosing a \emph{specific} irreducible representation in each isomorphism class of irreducible representations: the $\lambda^i$ introduced in \sec\ref{sub:T-repns}. Thus when we say $\RO(\T)$ we really mean $\Z[\lambda^i \mid i \ge 1]$, or $\Z_{(p)}[\lambda_i \mid i \ge 0]$ when we work $p$-locally. Similarly, $\RO(C_{p^n})$ really means $\Z[\lambda^i \mid 1\le i \le \frac{p^n-1}2]$, or $\Z_{(p)}[\lambda_i \mid 0 \le i \le n-1]$ when we work $p$-locally; when $p=2$, we must also include the sign representation $\varsigma_{n-1}$.
\end{warning}

Varying $H$, these fit into a Mackey functor $\mpi_{\alpha-\beta}(X)$. Varying the representation, we obtain the \emph{$RO(G)$-graded homotopy Mackey functor} $\mpi_\rog(X)$, which is the fundamental computational invariant of the $G$-spectrum $X$. It is conventional to use $*$ for $\Z$-grading, and $\rog$ for $\RO(G)$-grading.

A very important source of $\RO(G)$-graded classes is the \emph{Euler classes} $a_V$.

\begin{definition}
Let $V$ be an actual representation of $G$. Suspending the inclusion $\{0\} \hookrightarrow V$ gives a map $S^0 \to S^V$, which we denote by $a_V$. If $V^G \ne 0$, then $a_V$ is nulhomotopic. Otherwise, $a_V$ refines to a class $a_V \in \pi^G_{-V}\S$, and we continue to denote by $a_V$ its Hurewicz image in $\pi^G_{-V} X$ for any $G$-spectrum $X$.
\end{definition}

The significance of these classes is as follows. Consider the isotropy family of $V$,
\[ \cF_V = \{H\le G \mid V^H \ne 0\}. \]
Then $S(\infty V)$ is a model of $E\cF_V$, while $S^{\infty V}$ is a model of $\widetilde{E\cF_V}$. Note that
\[ S^{\infty V} = \colim\left(S^0 \morph^{a_V} S^V \morph^{a_V} S^{2V} \morph^{a_V} \dotsb\right), \]
so smashing with $S^{\infty V}$ has the effect of inverting $a_V$. Consequently, the isotropy separation square \eqref{eq:isotropy-separation} can be viewed as an \emph{arithmetic} square for $a_V$.

\subsection{Representations of \texorpdfstring{$\T$}{T} and \texorpdfstring{$C_{p^n}$}{Cpn}}
\label{sub:T-repns}

Let $\lambda^i$ be the $\T$-representation where $z\in\T\subset\C^\times$ acts as $z^i$. These exhaust the nontrivial irreducible real representations of $\T$. The representation spheres $S^{\lambda^i}$ and $S^{\lambda^j}$ are all integrally inequivalent \cite{Kawakubo}, but in the $p$-local setting we have $S^{\lambda^i} \isom S^{\lambda^j}$ whenever the $p$-adic valuations of $i$ and $j$ agree. Thus, let $\lambda_r = \lambda^{p^r}$. 

Note that $1\in R(\T)$ corresponds to $2\in \RO(\T)$; to avoid confusion, we will often write $\lambda_\infty$ for the one-dimensional \emph{complex} representation with trivial action.

We use the same notation for the restriction of $\lambda^i$ to a subgroup $C_{p^n}$. When $p\ne2$ these exhaust the irreducible real representations of $C_{p^n}$, and $\lambda_r$ is trivial for $r\ge n$. When $p=2$, there is additionally the sign representation of $C_{2^n}$, which we denote $\varsigma_{n-1}$, satisfying $\lambda_{n-1} = 2\varsigma_{n-1}$. The regular representations then decompose as
\begin{align*}
  \rho_{C_{p^n}} &= 1 + \bigoplus_{i=1}^{\frac{p^n-1}2} \lambda^i && p\ne2\\
    &= 1 + \sum_{i=0}^{n-1} \frac{p^{n-1-i}(p-1)}2 \lambda_i && \text{$p$-locally}\\
  \rho_{C_{2^n}} &= 1 + \varsigma_{n-1} + \bigoplus_{i=1}^{2^{n-1}-1} \lambda^i && p=2\\
  &= 1 + \varsigma_{n-1} + \sum_{i=0}^{n-2}2^{n-2-i}\lambda_i &&\text{2-locally}
\end{align*}

\begin{warning}
In the literature, one sometimes sees the notation $\lambda_d$ for what we call $\{d\}_\lambda$.
\end{warning}

As $C_n$ spectra, the representation spheres have cell structures
\[\xymatrix{
  S^0 \ar[r] & S^\spoke \ar[r] \ar[d] & S^{\lambda_r} \ar[d]\\
  & S^1 \tnsr C_{p^n}/C_{p^r+} \ar[r] & (\dots) \ar[d]\\
  && S^2 \tnsr C_{p^n}/C_{p^r+}
}\]
The attaching map $S^1\tnsr C_{p^n}/C_{p^r+} \to S^1 \tnsr C_{p^n}/C_{p^r+}$ is given by $1-\gamma$, where $\gamma$ is a chosen generator of $C_{p^n}$. The one-skeleton $S^\spoke$ is not actually a representation sphere, except in the case $G=C_{2^n}$, when $S^{\lambda_{n-1}/2} = S^{\varsigma_{n-1}}$. Taking duals gives a dual cell structure
\[\xymatrix{
  S^{-2} \tnsr C_{p^n}/C_{p^r+} \ar[r] & (\dots) \ar[d] \ar[r] & S^{-\lambda_r} \ar[d]\\
  & S^{-1} \tnsr C_{p^n}/C_{p^r+} \ar[r] & S^{-\spoke} \ar[d]\\
  && S^0
}\]
on $S^{-\lambda_r}$. Tensoring these together for various values of $r$, and using the fact that $S^{\lambda_s} \tnsr C_{p^n}/C_{p^r+} = S^2 \tnsr C_{p^n}/C_{p^r+}$ for $r\le s$, gives cellular structures for all $\alpha\in\RO(C_{p^n})$, and hence spectral sequences to compute $\mpi_\alpha X$ for any $C_{p^n}$-spectrum $X$. This is discussed in \cite[\sec1.2]{HHR_HZ}. When $\alpha$ is an actual representation (or the negative of one), the spectral sequences become chain complexes of Mackey functors, which are determined by the underlying chain complex of abelian groups.

\begin{example}
Let $\m M$ be a $C_p$-Mackey functor, and write $\m M(C_p/C_p) = M_1$, $\m M(C_p/e) = M_0$, and trivial Weyl action on $M_0$. Then $\mpi_{2\lambda_0+*}\m M$ is the homology of the following complex:
\[\xymatrix{
  M_1 \ar@/_1ex/[d]_-\res \ar[r]^-\res & M_0 \ar@/_1ex/[d]_-\Delta \ar[r]^-0 & M_0 \ar@/_1ex/[d]_-\Delta \ar[r]^-p & M_0 \ar@/_1ex/[d]_-\Delta \ar[r]^-0 & M_0 \ar@/_1ex/[d]_-\Delta\\
  M_0 \ar@/_1ex/[u]_-\tr \ar[r]_-\Delta & M_0[C_p] \ar@/_1ex/[u]_-\nabla \ar[r]_-{1-\gamma} & M_0[C_p] \ar[r]_-T \ar@/_1ex/[u]_-\nabla & M_0[C_p] \ar@/_1ex/[u]_-\nabla \ar[r]_-{1-\gamma} & M_0[C_p] \ar@/_1ex/[u]_-\nabla\\
  \ss 0 & \ss-1 & \ss-2 & \ss-3 & \ss-4
}\]
\end{example}

\begin{example}
\label{ex:rog-m-neg}
Let $\m M$ be a $C_{p^2}$-Mackey functor, and write $\m M(C_{p^2}/C_{p^2}) = M_2$, $\m M(C_{p^2}/C_p) = M_1$, $\m M(C_{p^2}/e) = M_0$, all with trivial Weyl action. Then $\mpi_{*-\lambda_0-\lambda_1}\m M$ is the homology of the following complex:
\[\xymatrix{
  M_0 \ar@/_1ex/[d]_-\Delta \ar[r]^-0 & M_0 \ar@/_1ex/[d]_-\Delta \ar[r]^-\tr & M_1 \ar@/_1ex/[d]_-\Delta \ar[r]^-0 & M_1 \ar@/_1ex/[d]_-\Delta \ar[r]^-\tr & M_2 \ar@/_1ex/[d]_-\res\\
  M_0[C_{p^2}/C_p] \ar@/_1ex/[d]_-\Delta \ar@/_1ex/[u]_-\nabla \ar[r]^-{1-\gamma} & M_0[C_{p^2}/C_p] \ar@/_1ex/[u]_-\nabla \ar@/_1ex/[d]_-\Delta \ar[r]^-\tr & M_1[C_{p^2}/C_p] \ar@/_1ex/[u]_-\nabla \ar@/_1ex/[d]_-\res \ar[r]^-{1-\gamma} & M_1[C_{p^2}/C_p] \ar@/_1ex/[u]_-\nabla \ar@/_1ex/[d]_-\res \ar[r]^-\nabla & M_1 \ar@/_1ex/[u]_-\tr \ar@/_1ex/[d]_-\res\\
  M_0[C_{p^2}] \ar@/_1ex/[u]_-\nabla \ar[r]_-{1-\gamma} & M_0[C_{p^2}] \ar@/_1ex/[u]_-\nabla \ar[r]_-\nabla & M_0[C_{p^2}/C_p] \ar@/_1ex/[u]_-\tr \ar[r]_-{1-\gamma} & M_0[C_{p^2}/C_p] \ar@/_1ex/[u]_-\tr \ar[r]_-\nabla & M_0 \ar@/_1ex/[u]_-\tr\\
  \ss 4 & \ss3 & \ss2 & \ss1 & \ss0
}\]
\end{example}

These cell structures imply the following very useful bound on homology.
\begin{lemma}
\label{lem:mack-bound}
Let $M$ be a $\T$-Mackey functor and let $\alpha$ be a fixed-point-free virtual representation of $\T$. Write $\alpha=\beta - \gamma$ for actual representations $\beta$, $\gamma$ (still assumed to be fixed-point-free). Then $\mpi_{*+\alpha}\m M$ is concentrated in $*\in[-2d_0(\beta), 2d_0(\gamma)]$.
\end{lemma}

An important observation, which we learned from Mike Hill, is that these have simpler cell structures \emph{as $\T$-spectra}.
\begin{observation}
In $\CyclSp$ there is an exact triangle
\[ \T/C_{p^r+} \to S^0 \to S^{\lambda_r}. \]
\end{observation}

\subsection{The slice filtration}
\label{sub:slice-filtration}

The equivariant slice filtration is a filtration on equivariant spectra first introduced by Dugger in the $C_2$ case \cite{Dugger}, then generalized to finite $G$ by Hill-Hopkins-Ravenel as a key tool in their solution of the Kervaire invariant one problem \cite{HHR}. It is modeled on the motivic slice filtration of Voevodsky, hence the qualifier ``equivariant''. For an introduction to the slice filtration, the original \cite[\sec4]{HHR} is still highly recommended, as well as the survey \cite{Primer}. However, significant advances have been made since then: \cite{NewSlices} gave a much easier characterization of slice connectivity, and \cite{Wilson} provided an algebraic description of categories of slices, as well as a general recipe for computing slices. A thorough treatment of the slice spectral sequence is given in \cite{Ullman}, and of course there are many computations in the literature one can learn from, such as \cite{HHR_HZ} or \cite{YarnallCyclic}.

We caution the reader that there is no single slice filtration; \cite{HHR} and \cite{Primer} use the \emph{classical} slice filtration, whereas \cite{NewSlices} and \cite{Ullman} treat the \emph{regular} slice filtration. We shall be concerned with the regular slice filtration. A general framework for slice filtrations is given in \cite[\sec1.3]{Wilson}.

\begin{definition}
Let $G$ be a finite group. A \emph{regular slice cell of dimension $n$} is a $G$-spectrum of the form
\[ \uparrow^G_H S^{k\rho_H}, \]
where $H$ is a subgroup of $G$, $\rho_H$ is the regular representation of $H$, and $k|H|=n$. The \emph{regular slice filtration} is the filtration generated by the regular slice cells. More explicitly,
\begin{itemize}
\item We say that a $G$-spectrum $X$ is \emph{slice $n$-connective}, and write $X\ge n$, if $X$ is in the localizing subcategory generated by the regular slice cells of dimension $\ge n$. Equivalently \cite{NewSlices}, $X$ is slice $n$-connective if and only if for all subgroups $H\le G$, the geometric fixed points $X^{\Phi H}$ are in the localizing subcategory of ordinary spectra generated by $S^{\lceil n/|H|\rceil}$.

We write $G\Sp_{\ge n}$ for the slice $n$-connective spectra. This is a coreflective subcategory, and we denote by $\regconn n X \to X$ the coreflection, the \emph{slice $n$-connective cover of $X$}.

\item We say that a $G$-spectrum $X$ is \emph{slice $n$-truncated} and write $X\le n$ if $\Map_{G\Sp}(Y, X)=*$ whenever $Y\ge n+1$. It suffices to check this for $Y$ a regular slice cell of dimension $\ge n+1$.

The slice $n$-truncated spectra are reflective, and we denote by $X \to \regtrunc n X$ the localization functor, the \emph{slice $n$-truncation of $X$}.

\item We say that a $G$-spectrum $X$ is an \emph{$n$-slice} if $X\ge n$ and $X\le n$. There is an exact triangle $\regconn{n+1} X \to X \to \regtrunc n X$, whence a canonical equivalence $\regtrunc n\regconn n X = \regconn n\regtrunc n X$. We write $\regslice n X$ for either of these, the \emph{$n$-slice of $X$}.

\item The \emph{slice spectral sequence} takes the form
\[ E^2_{s,t} = \mpi_{t-s} \regslice t X \conv \mpi_{t-s} X. \]
This follows Adams grading, so the $E^r_{s,t}$ term is placed in the plane in position $(t-s,s)$. The $d_r$ differential has bidegree $(r, r-1)$, or $(-1, r)$ in terms of the plane display. There is also an $\RO(G)$-graded version
\[ E^2_{s,t} = \mpi_{\alpha-s} \regslice{\dim\alpha} X \conv \mpi_{\alpha-s} X. \]
\end{itemize}
\end{definition}

The slice filtration is generally not associated to a $t$-structure, but rather a sequence of $t$-structures \cite[Definition 1.43]{Wilson}.

The above definition is only for finite groups, and it is not immediately clear how to interpret the slice filtration for cyclonic or cyclotomic spectra. We have chosen to interpret this as the $C_{p^n}$-slice filtration on the restriction to a $C_{p^n}$-spectrum for all $n$, but we do not claim this is the only or best option.

\begin{warning}
The slice covers $\regconn n$ give a \emph{descending} filtration, while the slice truncations $\regtrunc n$ give an \emph{ascending} filtration. This is opposed to the usual super/subscript convention for filtrations.
\end{warning}

\begin{remark}
In \cite{TCart}, Antieau and Nikolaus produce a $t$-structure on $\CycSp$ by stipulating that the forgetful functor $\CycSp \to \Sp$ reflect $n$-connective objects. A natural idea is to transport the slice filtration along $\CycSp \to \CyclSp$ in the same way. However, the Hill-Yarnall characterization implies that the slice connectivity of a cyclotomic spectrum is equal to the connectivity of its underlying spectrum, so this reproduces the cyclotomic $t$-structure.
\end{remark}

\newpage

\section{Arithmetic background}
\label{sec:bg-arith}
In this section we collect the needed background from number theory. In \sec\ref{sub:q-an} we introduce $q$-analogues, and survey several key instances where these appear; in particular, we repeatedly encounter the problem of $p$-typification. This discussion is the core of the narrative of the paper. In \sec\ref{sub:perfd} we recall the notion of perfectoid ring and Fontaine's ring $\Ainf$, followed by the homotopical properties of perfectoid rings. Finally, in \sec\ref{sub:prisms} we discuss the reformulation of perfectoid rings as perfect prisms, and explore some homotopical features of prisms.

\subsection{\tops{$q$}q-analogues and \tops{$p$}p-typification}
\label{sub:q-an}

\begin{definition}
Let $n\in\N$. The \emph{$q$-analogue of $n$} is the formal expression
\[ [n]_q \defeq \frac{q^n - 1}{q-1} = 1 + \dotsb + q^{n-1} \in \Z[q]. \]
\end{definition}
These give a one-parameter deformation of the natural numbers, and many notions in mathematics admit so-called \emph{$q$-deformations}, recovering the classical notion when $q=1$. Such $q$-deformations arise naturally in counting problems over finite fields, combinatorics, and a wide variety of other contexts. The relevance to $\THH$ is ultimately that $\TP$ can be used to construct $q$-de Rham cohomology, as envisioned in \cite{ScholzeQ} and carried out in \cite{BMS2}.

For us, the key feature of $\Z[q]$ is that it is a \emph{$\Lambda$-ring}: we have commuting ring endomorphisms $\psi^n(q) = q^n$ for $n\in\N^\times$, which are \emph{Frobenius lifts} in the sense that
\[ \psi^p(x) = x^p + p\delta_p(x) \]
for some (unique, since $\Z[q]$ is torsionfree) $\delta_p(x)\in\Z[q]$, for all primes $p$. This gives an action $n\mapsto\psi^n$ of $\N^\times$ on $\Z[q]$ in the category of commutative rings. This action interacts with $q$-analogues in two ways:
\begin{enumerate}
\item The map $\N^\times\to(\Z[q],\times)$ sending $n\mapsto[n]_q$ is not a map of commutative monoids, but rather is semi-multiplicative with respect to this action, in the sense that
\[ [mn]_q = [m]_q \psi^m([n]_q) = [n]_q\psi^n([m]_q) \]
for $m,n\in\N^\times$. Note that $[m]_q$ still divides $[mn]_q$.

\item For $m,n,k\in\N^\times$, there is a congruence
\begin{align*}
  \psi^{mk}([n]_q) &\equiv n\bmod [m]_q
\end{align*}
since $q^m\equiv1\bmod[m]_q$. We will explore the significance of this congruence in \sec\ref{sub:prisms}; it enforces a subtle constraint on the way in which $n\mapsto[n]_q$ deviates from being truly multiplicative.
\end{enumerate}
We will mostly restrict attention to the submonoid $p^\N\subset\N^\times$ and write $\phi\defeq\psi^p$, $\delta\defeq\delta_p$ (although it would be good to formulate some of this more globally). Note that $\phi^n$ thus means $\psi^{p^n}$. Prisms can be viewed as an axiomatization and generalization of the $p$-typical part of the above structure.

We now examine several case studies involving $q$-analogues.

\subsubsection{Perfectoid algebras}
This concerns the specialization
\[ \Z[q] \to \Z_p[q^{1/p^\infty}]^\wedge_{(p,q-1)}; \]
the reference for this section is \cite[Example 3.16]{BMS1} and the next few propositions.

The starting point is the observation that $[p]_q$ is the minimal polynomial of $\zeta_p$, and thus $\Z_p[q]/[p]_q = \Z_p[\zeta_p]$. We think of this as ``characteristic close to $p$''; note that $\zeta_p=1$ implies $p=0$. Similarly, $\phi^n([p]_q)$ is the minimal polynomial of $\zeta_{p^{n+1}}$, so $\Z_p[q]/\phi^n([p]_q)=\Z_p[\zeta_{p^{n+1}}]$. Thus, taking perfections yields
\[\xymatrix{
  \Z_p[q] \ar[d] \ar[r]^-\phi & \Z_p[q] \ar[d] \ar[r]^-\phi & \dotsb \ar[d] \ar[r] & \Z_p[q^{1/p^\infty}] \ar[d]\\
  \Z_p[\zeta_p] \ar[r] & \Z_p[\zeta_{p^2}] \ar[r] & \dotsb \ar[r] & \Z_p[\zeta_{p^\infty}]
}\]
We want $q=1$ to be the ``base case'', so we complete at $q-1$ and let $q^{1/p}$, rather than $q$, be a primitive $p^\th$ root of unity. Thus we set
\begin{align*}
  A &= \Z_p[q^{1/p^\infty}]^\wedge_{(p,q-1)}\\
  [n]_A &= [n]_{q^{1/p}}\\
  R &= A/[p]_A
\end{align*}
We warn that $A/\phi^n([p]_A)$ is now isomorphic to $\Zpcycl=\Z_p[\zeta_{p^\infty}]^\wedge_p$ for all $n$; however, these have different $A$-algebra structures, namely $q\mapsto\zeta_{p^{n-1}}$.

What about the other $q$-analogues? It turns out that there are canonical identifications
\[ A/[p^n]_A \iso W_n(R) \]
such that the projections $A/[p^{n+1}]_A\to A/[p^n]_A$ are identified with the Frobenius maps $F\colon W_{n+1}(R)\morph W_n(R)$. (We would get a different $A$-algebra structure if we wanted the $R$ maps to be $A$-linear.) Since the $A$-algebra structure is important to us, we will almost exclusively write $A/[p^n]_A$ rather than $W_n(R)$.

The ring $R$ is an example of a \emph{perfectoid ring} and the triple $(A,\phi,[p]_A)$ is the corresponding \emph{perfect prism}. It suffices to think of this example for the remainder of the paper (except when we explicitly restrict to perfect $\F_p$-algebras).

\subsubsection{Legendre's formula}
Throughout this paper we will need to count the number of integers in $\{1,\dotsc,n\}$ having a particular $p$-adic valuation. One instance of this is the following classical formula for the $p$-adic valuation of a factorial.
\begin{proposition}[Legendre]
The $p$-adic valuation of $n!$ is given by
\begin{align*}
  v_p(n!) &= \sum_{r=1}^\infty \lf\frac n{p^r}\rf
\end{align*}
\end{proposition}




The $q$-factorial is defined by
\[ [n]_q!\defeq[1]_q\dotsm[n]_q. \]
Ansch\"utz-le Bras have supplied a $q$-deformation of Legendre's formula in their work identifying the cyclotomic trace in degree 2 as a $q$-logarithm.
\begin{lemma}[{\cite[Lemma 4.8]{AClBTrace}}; ``$q$-Legendre formula'']
\label{lem:q-lgdnr}
In the specialization $\Z[q] \to \Z_p\psr{q-1}$, we have
\begin{align*}
  [n]_q!
    &= u\prod_{r=1}^\infty \phi^{r-1}([p]_q)^{\lf n/p^r \rf}\\
    &= u\prod_{r=1}^\infty [p^r]_q^{\lf n/p^r\rf - \lf n/p^{r+1}\rf}
\end{align*}
for a unit $u\in\Z_p\psr{q-1}^\times$.
\end{lemma}
In what follows, we will apply this to $[n]_A!$ rather than $[n]_q!$.

An illustration of the $q$-Legendre formula is given in Figures \ref{fig:q-lgndr-2} and \ref{fig:q-lgndr-3}. In the classical Legendre formula, all bars would be the same color. However, as we have already noted, $q$-analogues are only semimultiplicative: $[p^n]_q = [p]_q\dotsm\phi^{n-1}([p]_q)$ rather than $[p]_q^n$.

\begin{figure}
  \begin{center}
  \includegraphics[width=\textwidth]{./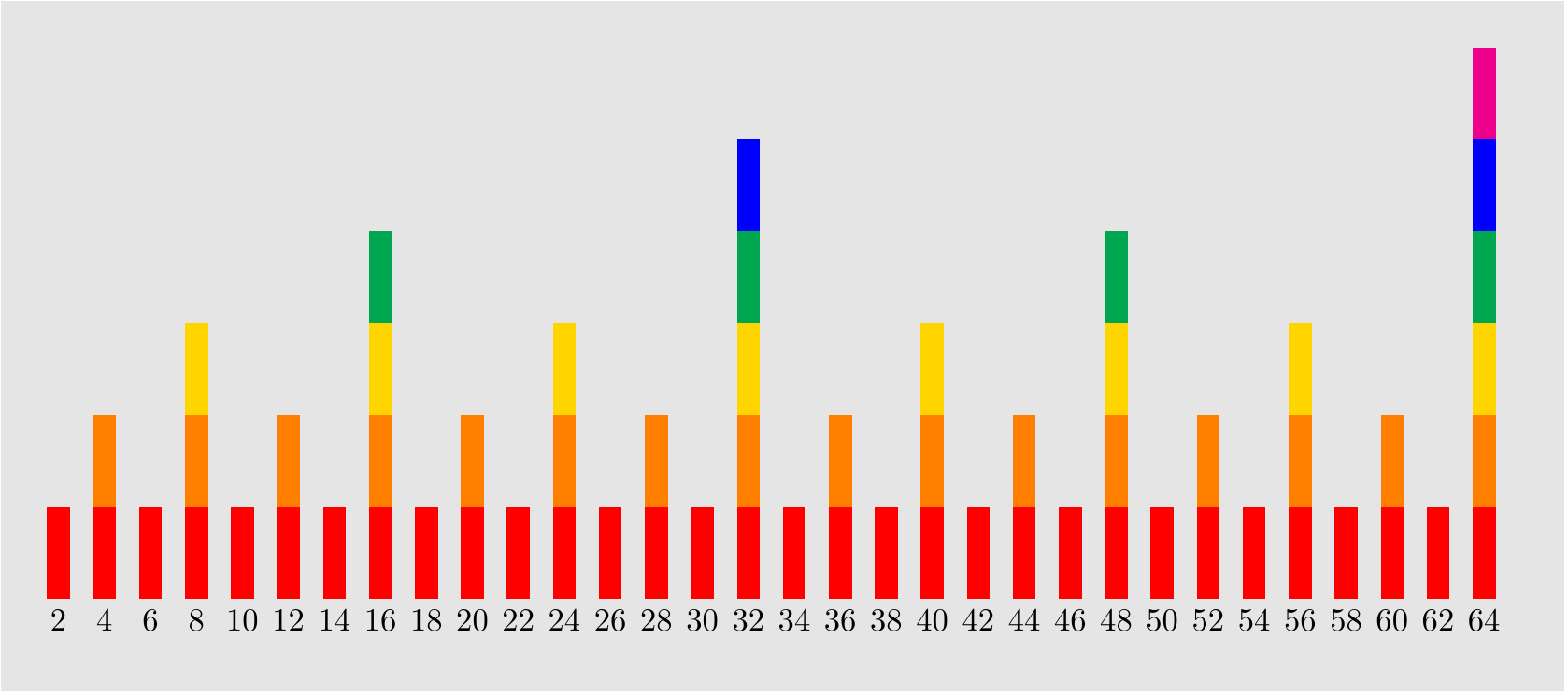}
  \end{center}
  \caption{The $q$-Legendre formula at $p=2$}
  \label{fig:q-lgndr-2}
\end{figure}

\begin{figure}
  \begin{center}
  \includegraphics[width=\textwidth]{./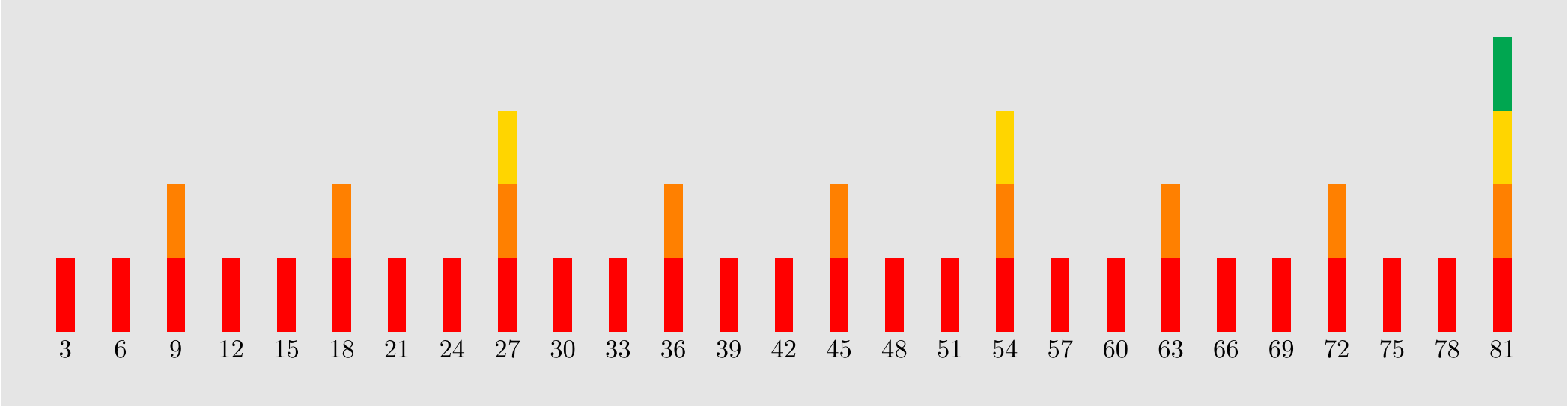}
  \end{center}
  \caption{The $q$-Legendre formula at $p=3$}
  \label{fig:q-lgndr-3}
\end{figure}

In \sec\ref{sub:thh-slices}, we will see that the appearance of the floor function in the preceding formulas is intimately linked to the appearance of the ceiling function in the slice filtration. The identity
\[ \left\lceil\frac{n+1}{p^k}\right\rceil - 1 = \left\lfloor\frac n{p^k}\right\rfloor \]
is useful for translating between the two.

\begin{remark}
Bhargava \cite{Bhargava!} has an interesting framework for generalized factorials.
\end{remark}

\subsubsection{Circle representations} This concerns the specialization
\[ \Z[q] \to \Z[\lambda^\pm] = R(\T), \]
where $R(\T)$ is the complex representation ring of the circle group $\T$. In this case, $\lambda$-analogues are something familiar:
\[ \mathop{\downarrow^\T_{C_n}} [n]_\lambda = \C[C_n] = \aInd e{C_n}\C \]
is another name for the complex regular representation of $C_n$. We prefer the notation $[n]_\lambda$ as it fits with the overall theme of $q$-analogues. An important consequence is that
\[ \aRes\T{C_n} S^{[n]_\lambda} = N^{C_n}_e S^2, \]
so $S^{[n]_\lambda}$ is $\ge2n$ by \cite[Corollary I.5.8]{Ullman}; this is ultimately the reason for our interest in $\lambda$-analogues. We will also use the (admittedly atrocious) notation
\begin{align*}
  \{n\}_\lambda
    &\defeq \lambda^1+\dotsb+\lambda^n\\
    &= [n+1]_\lambda - \lambda_\infty\\
    &= \lambda[n-1]_\lambda
\end{align*}
which has the property that $\mathop{\downarrow^\T_{C_{n+1}}}\{n\}_\lambda$ is the complex \emph{reduced} regular representation of $C_{n+1}$.

As noted earlier, when working $p$-locally it suffices to consider the representations $\lambda_i \defeq \lambda^{p^i}$.

\begin{observation}
\label{obs:q-lgndr}
Determining the decomposition of $\{n\}_\lambda$ into $\lambda_i$'s is isomorphic to the problem considered in the $q$-Legendre formula. More precisely, it corresponds to decomposing $[pn]_A!=[p]_A^n \phi([n]_A!)$, since we always pick up a new irreducible representation when passing from $\{n\}_\lambda$ to $\{n+1\}_\lambda$, whereas we only pick up new (non-unit) factors in $[n]_A!$ when we hit multiples of $p$.
\end{observation}
For example, with $p=3$ the representation $\{4\}_\lambda=\lambda+\lambda^2+\lambda^3+\lambda^4$ has $p$-typical decomposition
\begin{align*}
  \{4\}_\lambda &= 3\lambda_0 + \lambda_1\\
  \intertext{and dimension-sequence}
  d_\bullet(\{4\}_\lambda) &=(4,1).\\
  \intertext{This parallels the decomposition}
  [4p]_A!
    &= u[p]_A^3 [p^2]_A\\
    &= u[p]_A^4 \phi([p]_A).
\end{align*}

\begin{corollary}
\label{cor:irred-decomp}
The dimension-sequences of the representations $[n]_\lambda$ and $\{n\}_\lambda$ are
\[
  d_s([n]_\lambda) = \lc\frac n{p^s}\rc,\qquad
  d_s(\{n\}_\lambda) = \lf\frac{n}{p^s}\rf,
\]
and their $p$-typical irreducible decompositions are
\begin{align*}
  [n]_\lambda &= \lambda_\infty + \sum_{s=0}^\infty\left(\lc\frac n{p^s}\rc - \lc\frac n{p^{s+1}}\rc\right) \lambda_s,\\[.4em]
  \{n\}_\lambda &= \sum_{s=0}^\infty \left(\lf\frac n{p^s}\rf - \lf\frac n{p^{s+1}}\rf\right) \lambda_s.
\end{align*}
\end{corollary}

\begin{remark}
The representation $[n]_\lambda$ is also familiar to homotopy theorists as the \emph{Bott cannibalistic class} $\rho^n(\lambda)$ arising in topological $K$-theory. In fact, since the Bott element is given by $\beta=\lambda-1$, we can write $\pi_*\ku=\Z\psr{\lambda-1}$. Thus, another way to think of $\lambda$-analogues and $\lambda$-deformations is that we are deforming the trivial line bundle to a general line bundle.

This is related to the present situation as follows. By \cite{SuslinKOC} we have $\K(\O_{\C_p};\Z_p)=\ku_p$, and the cyclotomic trace \[ \pi_2\ku_p = \K_2(\O_{\C_p};\Z_p)\to\TP_2(\O_{\C_p};\Z_p) \] sends $\lambda$ to (a possible choice of) $q$ by \cite[Lemma 3.2.3]{LarsOCp}. This can be generalized to $\Zpcycl$ algebras, see \cite[Example 5.5]{MathewK1}.
\end{remark}

\subsubsection{Tate cohomology}
Our final example is more philosophical in nature. It concerns the specialization
\[ \Z[q] \to \Z[q]/(q^n - 1) = \Z[C_n] \]
in which $q$ is viewed as a generator of $C_n$. In this case $[n]_q$ gets sent to the $C_n$-\emph{trace}, so that quotienting by $[n]_q$ essentially amounts to taking Tate cohomology. This suggests yet another perspective on $q$-deformations, one which may appeal to the equivariant homotopy theorist: we are deforming from a trivial action to a non-trivial action, such that multiplication by $n$ gets deformed to a transfer for a subgroup of index $n$.

One would then hope that the equivariant \emph{norm} for a subgroup of index $n$ would correspond to some $q$-deformation of raising to the power $n$. At present, we are only able to relate the norm to existing notions of $q$-powers on elements of rank one. We record this in \sec\ref{sub:prisms}.


\subsection{Perfectoid rings}
\label{sub:perfd}

\subsubsection{Arithmetic aspects}
The reference for this section is \cite[\sec\sec 3--4]{BMS1}, see also \cite[Lecture IV]{PrismNotes}.

\begin{definition}
Let $R$ be a $p$-complete ring. The \emph{tilt} $R^\flat$ of $R$ is the inverse limit perfection of $R/p$:
\begin{align*}
  R^\flat &\defeq \lim_\phi R/p.
  \intertext{There is a multiplicative bijection}
  R^\flat &\isom \lim_{x\mapsto x^p} R,
\end{align*}
and we write $(-)^\sharp\colon R^\flat\to R$ for the multiplicative map $(x_0,x_1,\dotsc)\mapsto x_0$.
\end{definition}

\begin{example}
If $R=\Zpcycl\defeq\Z_p[\zeta_{p^\infty}]^\wedge_p$, then $R/p=\F_p[x^{1/p^\infty}]/x$ and $R^\flat = \F_p\psr{x^{1/p^\infty}}$. The $\sharp$ map is given by $x^\sharp=(\zeta_p-1)^p$.

We get the same $R/p$ and $R^\flat$ by taking instead $R=\Z_p[p^{1/p^\infty}]^\wedge_p$. In this case the $\sharp$ map is given by $x^\sharp=p$.
\end{example}

Tilting is right adjoint to the functor of ($p$-typical) Witt vectors:
\[ \adjnctn{\{\text{perfect }\F_p\text{-algebras}\}}{\{p\text{-complete }\Z_p\text{-algebras}\}}W{(-)^\flat} \]
Morally, we think of this adjunction as extension/restriction of scalars along the Teichm\"uller lift $\F_p\to\Z_p$ (which is a map of multiplicative monoids, but not a map of rings).

\begin{definition}
We write $\Ainf(R)\defeq W(R^\flat)$, and $\theta\colon\Ainf(R)\to R$ for the counit of this adjunction. By functoriality of Witt vectors, $\Ainf(R)$ inherits a Frobenius $\phi$ from $R^\flat$ (even though there is usually no Frobenius on $R$).
\end{definition}

Explicitly, every element of $\Ainf(R)$ has the form
\[
  x = \sum_{i=0}^\infty [a_i] p^i
\]
with $a_i\in R^\flat$, and the maps $\phi$ and $\theta$ are given by
\[
  \phi(x) = \sum_{i=0}^\infty [a_i^p] p^i,\qquad
  \theta(x) = \sum_{i=0}^\infty a_i^\sharp p^i.
\]

\begin{example}
\label{ex:zpcycl-q}
If $R=\Zpcycl$, then $\Ainf(R)=\Z_p[q^{1/p^\infty}]^\wedge_{(p,q-1)}$ is the ring we encountered in the previous section. The element $q$ is constructed as follows. Fix a compatible choice of $\{\zeta_{p^i}\}$, and let
\begin{align*}
  \epsilon &= (1,\zeta_p, \zeta_{p^2}, \dots)\in R^\flat\\
  q &= [\epsilon] \in \Ainf(R) = W(R^\flat)
\end{align*}
We see that $\theta(q^{1/p})=\zeta_p$, so $\theta(q)=1$ and $\ker\theta$ is generated by $[p]_{q^{1/p}}=\frac{q-1}{\phi^{-1}(q-1)}$.

In particular, $\Ainf(R)$ is a $\Z_p[q^{1/p^\infty}]^\wedge_{(p,q-1)}$-algebra for any $\Zpcycl$-algebra $R$. Traditionally $q-1$ is denoted $\mu$, $[p]_{q^{1/p}}$ is denoted $\xi$, and $[p]_q$ is denoted $\~\xi$.
\end{example}

\begin{remark}
$q$ is roughly a $p$-adic analogue of $e^{2\pi i}$. Note that $e^{2\pi i}=1$, but we can formally write $(e^{2\pi i})^{1/p} = e^{2\pi i/p} = \zeta_p$ whereas $1^{1/p} = 1$. Keeping track of an inverse system of $p$-power roots allows us to make this formal manipulation precise, and interpret $e^{2\pi i}$ as an expression (like $q$) whose ``underlying element'' is $1$, but which contains the information of the $\zeta_{p^\infty}$.
\end{remark}

\begin{definition}[{\cite[Definition 3.5]{BMS1}}]
A \emph{perfectoid ring} is a ring $R$ satisfying:
\begin{enumerate}[label=(\alph*)]
\item there is some $\pi\in R$ such that $\pi^p$ divides $p$;

\item $R$ is $\pi$-complete;

\item the Frobenius $R \morph^\phi R/p$ is surjective;

\item the kernel of $\Ainf(R) \morph^\theta R$ is principal.
\end{enumerate}
If $R$ is $p$-torsionfree, the last condition may be replaced by:
\begin{enumerate}
\item[(d')] if $x\in R[\frac1p]$ with $x^p\in R$, then $x\in R$.
\end{enumerate}
\end{definition}

\begin{example}
An $\F_p$-algebra is a perfectoid ring if and only if it is perfect; in this case $\pi=0$, and $\Ainf(R)=W(R)$. $\Z_p^{\mathrm{cycl}}$ and $\Z_p[p^{1/p^\infty}]^\wedge_p$ are both perfectoid. If $C$ is a complete algebraically closed extension of $\Q_p$, then $\O_C$ is a perfectoid ring; in particular, $\O_C$ is a $\Zpcycl$-algebra.

Some examples of rings which are not perfectoid are $\Z_p$ (a), $\Z_p[p^{1/p}]$ (c), and $\Z_p\psr{x^{1/p^\infty}}/x$ (d').
\end{example}

\begin{remark}
We are mainly interested in perfectoid rings which are either $p$-torsion or $p$-torsionfree. There is a fracture theorem expressing any perfectoid ring as a (homotopy) pullback of such perfectoid rings \cite[Proposition IV.3.2]{PrismNotes}, so we are justified in restricting to such.
\end{remark}

\begin{remark}
If $R$ is perfectoid, then $\Ainf(R)\morph^\theta R$ is the universal pro-infinitesimal formal $p$-adic thickening of $R$ \cite[Th\'eor\`eme 1.2.1]{FontaineCorps}. This is the reason for the notation $\Ainf$.
\end{remark}

Assume from now on that $R$ is perfectoid. In this case, there is an alternative description of $\Ainf$ which is very useful. Although by definition
\[ \Ainf(R) \defeq \lim_{n, R} W_n(R^\flat), \]
it turns out \cite[Lemma 3.2]{BMS1} that there is also a canonical isomorphism
\begin{equation}\label{eq:Ainf-F}
\Ainf(R) \isom \lim_{n, F} W_n(R).
\end{equation}
This is very important for the topological story: $\TR_0(R) = W(R)$, but $\TF_0(R) = \Ainf(R)$.

Under the isomorphism \eqref{eq:Ainf-F}, we define $\~\theta_n\colon \Ainf(R) \morph W_n(R)$ to be the projection. Explicitly, for $x = (x^{(0)}, x^{(1)}, \dots) \in R^\flat$, we have $\~\theta_n([x]) = [x^{(n)}]$. The map $\theta$ introduced above is the same as $\~\theta_1\phi$, and in fact we will mainly be concerned with $\~\theta_n\phi$ (which is relevant to $\TF$) rather than $\~\theta_n$ (which is relevant to $\TP$). We also will not use the maps $\theta_n\defeq\~\theta_n\phi^n$ (for $n>1$), which are relevant to $\TR$.

The crucial point for us is how these maps interact with the $F$, $R$, $V$ operators on $W_n(R)$.

\begin{lemma}[{\cite[Lemma 3.4]{BMS1}}]
\label{lem:theta-compats}
The following diagrams commute:
\[
  \vcenter{\xymatrix{
    \Ainf(R) \ar[r]^-{\~\theta_{n+1}} \ar[d]_-{\phi^{-1}} & W_{n+1}(R) \ar[d]^-{R}\\
    \Ainf(R) \ar[r]_-{\~\theta_n} & W_n(R)
  }}
  \quad
  \vcenter{\xymatrix{
    \Ainf(R) \ar[r]^-{\~\theta_{n+1}} \ar@{=}[d] & W_{n+1}(R) \ar[d]^-F\\
    \Ainf(R) \ar[r]_-{\~\theta_n} & W_n(R)
  }}
  \quad
  \vcenter{\xymatrix{
    \Ainf(R) \ar[r]^-{\~\theta_{n+1}} & W_{n+1}(R)\\
    \Ainf(R) \ar[r]_-{\~\theta_n} \ar[u]^-{\~\lambda_{n+1}} & W_n(R) \ar[u]_-V
  }}
\]
Here $\~\lambda_{n+1}$ is an element of $\Ainf(R)$ satisfying $\~\theta_{n+1}(\lambda_{n+1}) = V(1)\in W_{n+1}(R)$.
\end{lemma}

By definition, $\ker\theta$ is a principal ideal; we let $[p]_A$ be a choice of generator. (This is \emph{not} a Teichm\"uller representative.) Then the elements
\begin{align*}
  [p^n]_A &\defeq [p]_A\phi([p]_A)\dotsm \phi^{n-1}([p]_A)\\
  \phi([p^n]_A) &= \phi([p]_A)\dotsm\phi^n([p]_A)
\end{align*}
generate $\ker(\~\theta_n\phi)$ and $\ker\~\theta_n$, respectively \cite[Lemma 3.12]{BMS1}. Note in particular that for $n\le m$,
\[ [p^m]_A = \phi^{n}([p^{m-n}]_A)[p^n]_A. \]
This gives the following, which will turn out to be related to the cell structure of $S^{\lambda_{n-1}}$ (\sec\ref{sub:T-repns}, Lemma \ref{lem:rog-pos}).

\begin{lemma}[{\cite[Remark 3.19]{BMS1}}]
\label{lem:a-seq-bms}
For $n<m$, the bottom row of the following commutative diagram is exact:
\[\xymatrix{
  & \Ainf(R) \ar[d]_-{\~\theta_n\phi} \ar[r] & \Ainf(R) \ar[d]^-{\~\theta_m\phi} \ar[r]^-{[p^n]_A} & \Ainf(R) \ar[d]^-{\~\theta_m\phi} \ar@{=}[r] & \Ainf(R) \ar[d]^-{\~\theta_n\phi}\\
  0 \ar[r] & W_n(R) \ar[r]_-{V^{m-n}} & W_m(R) \ar[r] & W_m(R) \ar[r]_-{F^{m-n}} & W_n(R) \ar[r] & 0
}\]
\end{lemma}


\subsubsection{Topological aspects}
The fundamental theorem of topological Hochschild homology is that $\THH_*(\F_p) = \F_p[\sigma]$, with $|\sigma| = 2$. This is due to B\"okstedt \cite{Bokstedt}, but for several decades his paper was available only via clandestine channels; fortunately, a public proof is now available, see \cite[\sec1.2]{HNHandbook} or \cite[\sec1]{KrauseNikolausBP}. The key local calculation of \cite{BMS2} is that B\"okstedt periodicity continues to hold for perfectoid rings.

\begin{theorem}[{\cite{LarsOCp}, \cite[Theorem 6.1]{BMS2}}]
Let $R$ be a perfectoid ring. Then $\THH_*(R;\Z_p) = R[\sigma]$, for some choice of $\sigma\in\THH_2(R;\Z_p)$. 
\end{theorem}

\begin{proposition}[{\cite[Propositions 6.2 and 6.3]{BMS2}}]
Let $R$ be a perfectoid ring, and let $A=\Ainf(R)$. We can choose generators $[p]_A\in\ker\theta$, $\sigma\in\TC^-_2(R;\Z_p)$, $t\in\TC^-_{-2}(R;\Z_p)$, and $\tau\in\TP_{-2}(R;\Z_p)$ to give identifications
\begin{align*}
  \TC^-_*(R;\Z_p) &= \frac{A[\sigma,t]}{\sigma t-[p]_A}\\[.2em]
  \TP_*(R;\Z_p) &= A[\tau^{\pm1}]
\end{align*}
such that $\xymatrix@1{\TC^-(R;\Z_p) \ar@<1ex>[r]^-\can \ar@<-1ex>[r]_-\varphi & \TP(R;\Z_p)}$ act as
\begin{align*}
  \can(\sigma) &= [p]_A\tau^{-1} & \varphi(\sigma) &= \tau^{-1}\\
  \can(t) &= \tau & \varphi(t) &= \phi([p]_A)\tau
\end{align*}
\end{proposition}
Since we will mainly be concerned with the canonical map, we will make a slight abuse of notation and write $t$ rather than $\tau$.

\subsection{Prisms}
\label{sub:prisms}
The classical definition of perfectoid rings given in the previous section is most useful for recognizing perfectoid rings in the wild. However, for our purposes it is more natural to view perfectoid rings as equivalent to \emph{perfect prisms}, as the emphasis will be on $\Ainf(R)$ rather than $R$. We hope to convince the reader that prisms are natural from the perspective of equivariant homotopy theory. Although we will only use perfect prisms in the remainder of the paper, the general setting is illuminating; we conjecture that our results hold for general prisms, but this is nontrivial to prove as $\TR$ and  $\TF$ are not yet understood in the general case. References for this section are \cite{Prismatic}, \cite{PrismNotes}, and \cite{AClBTrace}.

\begin{definition}
A \emph{$\delta$-structure} on a ring $A$ is a ring homomorphism $\phi\colon A\to A$ which is a \emph{derived lift of Frobenius}. When $A$ is $p$-torsionfree, this just means that $\phi(x)\equiv x^p\bmod p$; in this case, we can uniquely solve for the element ``witnessing'' this congruence, and thus define a function $\delta\colon A\to A$ such that
\[ \phi(x) = x^p + p\delta(x) \]
for all $x\in A$. ``Derived lift'' means that for general $A$, we specify $\delta$ rather than $\phi$, where $\delta$ is required to satisfy whatever identies are needed for $\phi$ to be a ring homomorphism.

A \emph{$\delta$-ring} is a ring together with a $\delta$-structure. A $\delta$-ring is \emph{perfect} if $\phi$ is an isomorphism.
\end{definition}

\begin{remark}
A $\delta$-structure on $A$ is equivalent to a ring section of $W_2(A)\morph^R A$, given in Witt coordinates by $x\mapsto(x,\delta(x))$ and in ghost coordinates by $x\mapsto(x,\phi(x))$. This can be seen from the pullback diagram
\[\xymatrix{
  W_2(A) \ar[r]^-R \ar[d]_-F \pullback & A \ar[d]^-{\varphi}\\
  A \ar[r]_-\can & A\mmod p \defeq A\tnsr^\L_{\Z} \F_p \hspace{-5em}
}\]
Again, this pullback diagram follows from injectivity of the ghost map in the torsionfree case, and in general by Kan extending. This perspective is apparently due to Rezk \cite{RezkEtale}.
\end{remark}

\begin{definition}[{\cite[Definition 3.2]{Prismatic}}]
A \emph{prism} consists of a $\delta$-ring $A$ together with an ideal $I$ such that:
\begin{itemize}
\item $I$ defines a Cartier divisor on $\Spec A$;

\item $A$ is derived $(p,I)$-complete;

\item (``prism condition'') $p\in I+\phi(I)A$.
\end{itemize}
We write $R=A/I$. A prism is:
\begin{itemize}
\item \emph{perfect} if $\phi$ is an isomorphism;

\item \emph{crystalline} if $I=(p)$;

\item \emph{orientable} if $I$ is principal, in which case a choice of generator is called an \emph{orientation}. We denote an orientation by $[p]_A$;

\item \emph{transversal} if it is orientable and $(p,[p]_A)$ is a regular sequence. (This implies that $R$ is $p$-torsionfree.)
\end{itemize}
\end{definition}

It is usually safe to assume all prisms are orientable. In this case, the first condition requires that $[p]_A$ is a non-zerodivisor, and the prism condition becomes
\[ \phi([p]_A) \equiv up \bmod [p]_A \]
for a unit $u\in A^\times$, or equivalently $\delta([p]_A)\in A^\times$.

\begin{example}
Here are examples of prisms.
\begin{itemize}
\item A crystalline prism is simply a $\delta$-ring which is $p$-torsionfree and $p$-complete.

\item The category of perfectoid rings is equivalent to the category of perfect prisms via $R\mapsto(\Ainf(R),\ker\theta)$ and $(A,I)\mapsto A/I$. Perfect $\F_p$-algebras correspond to perfect crystalline prisms, while torsionfree perfectoid rings correspond to perfect transversal prisms. Different untilts of a perfect $\F_p$-algebra $k$ correspond to different prism structures on $W(k)$.

\item Let $K/\Q_p$ be a finite extension with residue field $k$ and fixed uniformizer $\pi$. Let $\BK=W(k)\psr z$, with $\delta$-struture given by the usual Frobenius on $W(k)$ and $\phi(z)=z^p$. There is a surjection $\BK\to\O_K$ given by $z\mapsto\pi$, with kernel generated by an Eisenstein polynomial $E(z)$. Then $(\BK,(E(z)))$ is a prism, said to be of ``Breuil-Kisin type''. Different local fields $K$ with residue field $k$ correspond to different prism structures on $\BK$, so we can think of $K$ as an untilt (in the sense of ``characteristic zero incarnation'') of $k(\!(z)\!)$.

\item Let $A=\Z_p\psr{q-1}$ with $\phi(q)=q^p$, and let $I=([p]_q)$ with orientation $[p]_A\defeq [p]_q$. This ``$q$-crystalline'' prism has the interesting property that $\delta([p]_A)\equiv1\bmod[p]_A$.
\end{itemize}
\end{example}



Our main goal in this section is to explain the significance of the prism condition. First, we need an elaboration of it. We again set $[p^n]_A = [p]_A\phi([p]_A)\dotsm \phi^{n-1}([p]_A)$. The $i=1$ case of the following lemma appears as \cite[Lemma 3.5]{AClBTrace}.

\begin{proposition}
\label{prop:prism-extended}
For $i\le j$, there is a congruence
\[ \phi^j([p]_A) \equiv u_{i,j}p \bmod [p^i]_A \]
for some unit $u_{i,j}\in A^\times$.
\end{proposition}
\begin{proof}
The case $(i,j)=(1,1)$ is the prism condition, with $u_{1,1}=\delta([p]_A)$. To induct in the direction $(i,i)\implies(i+1,i+1)$, we write
\begin{align*}
  \phi^{i+1}([p]_A)
    &= \phi^i([p]_A)^p + p\phi^i u_{1,1}\\
    &= (u_{i,i}p+x[p^i]_A)^{p-1} \phi^i([p]_A) + p\phi^i u_{1,1}\\
    &\equiv (u_{i,i}^{p-1}p^{p-2}\phi^i([p]_A)+\phi^i u_{1,1})p \mod [p^{i+1}]_A.
\end{align*}
The final parenthesized expression is a unit because $\phi^i([p]_A)\in\rad(A)$. To induct in the $(i,j)\implies(i,j+1)$ direction, we write
\begin{align*}
  \phi^{j+1}([p]_A)
    &= \phi^j([p]_A)^p + p\phi^j u_{1,1}\\
    &\equiv (u_{i,j}^p p^{p-1}+\phi^j u_{1,1})p \mod [p^i]_A.
\end{align*}
The final parenthesized expression is a unit because $p\in\rad(A)$.
\end{proof}
\begin{corollary}
For $i\le \min\{j,r\}$ there is a congruence
\[ \phi^r([p^{j-i}]_A) \equiv up^{j-i} \bmod [p^i]_A \]
for some unit $u\in A^\times$.
\end{corollary}

\begin{remark}[Algebro-geometric interpretation of the prism condition]
  Geometrically, the prism condition says that the closed subschemes of $\Spec(A)$ cut out by any two $\phi$-iterates $\phi^i([p]_A)$ and $\phi^j([p]_A)$ intersect only in characteristic $p$:
  \[ V(\phi^i[p]_A) \cap V(\phi^j[p]_A) \subset V(p). \]
  We imagine shining a beam of characteristic $p$ into $\Spec A$, refracting it into beams of characteristic $\phi^\bullet([p]_A)$, which can then be studied one at a time; this is the reason for the name ``prism''. Our proof of Proposition \ref{prop:norm-lift} is an example of this idea.
  \begin{center}
    \begin{tikzpicture}
    \draw (0, 0) -- (0.5,{sin(60)}) -- (1, 0) -- cycle;
    \draw (0.5, 0) node[below] {$\Spec A$};

    \draw (-1,0) -- ({0.4},{0.4 * tan(60)});
    \draw (-1,0) node[below] {\tiny$V(p)$};

    \draw[red] ({0.5+0.1},{0.4 * tan(60)}) -- (2,2);
    \draw[red] (2,2) node[right] {\tiny$V([p]_A)$};
    \draw[orange] ({0.5+0.1},{0.4 * tan(60)}) -- (2,1.5);
    \draw[orange] (2,1.5) node[right] {\tiny$V(\phi([p]_A))$};
    \draw[Goldenrod] ({0.5+0.1},{0.4 * tan(60)}) -- (2,1);
    \draw[Goldenrod] (2,1) node[right] {\tiny$V(\phi^2([p]_A))$};
    \draw[Green] ({0.5+0.1},{0.4 * tan(60)}) -- (2,0.5);
    \draw[Green] (2,0.5) node[right] {\dots};
    \draw[blue] ({0.5+0.1},{0.4 * tan(60)}) -- (2,0);
    \draw[blue] (2,0) node[right] {\dots};
    \draw[violet] ({0.5+0.1},{0.4 * tan(60)}) -- (2,-0.5);
    \draw[violet] (2,-0.5) node[right] {\dots};
    \end{tikzpicture}
  \end{center}
\end{remark}

There is also an equivariant interpretation of the prism condition: it is just what we need to build a Mackey functor (specifically, to satisfy the axiom $\res\circ\tr=p$).

\begin{corollary}[Equivariant interpretation of the prism condition]
The prescription
\begin{align*}
  \m W(\T/C_{p^n}) &= A/[p^{n+1}]_A
\end{align*}
defines a Mackey functor $\m W$, with restriction maps given by the natural projections, by defining the transfers as indicated:
\[\xymatrix{
  \vdots \mdown{}\\
  A/[p^3]_A \mdown{1} \mup{}\\
  A/[p^2]_A \mdown{1} \mup{\phi^2([p]_A)u_{2,2}^{-1}}\\
  A/[p]_A \mup{\phi([p]_A)u_{1,1}^{-1}}
}\]
\end{corollary}
In subsequent Lewis diagrams we will omit the $u_{i,i}^{-1}$ to save space (and since we will be working up to units anyway), but they do matter.

\begin{remark}
Proposition \ref{prop:prism-extended} is not optimal: already in the base case we in fact have $\phi([p]_A)\equiv u_{1,1}p\bmod [p]_A^p$. The congruences we have given suffice to construct the Mackey structure, but it seems that sharper statements are needed to study the interaction of the norm maps with the slice filtration.
\end{remark}

Let us explain the connection to topology. Assume there exists a connective $\E_\infty$-ring $\S_A$ such that $\S_A\tnsr_\S\Z=A$; this can be constructed by hand in all the examples we have given, starting from Lurie's spherical Witt vectors \cite[Example 5.2.7]{Ell2}. In the case of a perfectoid ring $R$, the canonical map
\[ \THH(R;\Z_p)\to\THH(R/\S_A) \]
is an equivalence (\cite[Proposition 3.5]{KrauseNikolausBP}, \cite[Lemma 14]{Zhouhang}); for general prisms, $\THH(R/\S_A)$ is what one should study. The Mackey functor $\m W$ arises as $\mpi_0\THH(R/\S_A)^{E\T_+}$, which agrees with $\mpi_0\THH(R/\S_A)$ when $A$ is perfect. The reader should consult \cite{Zhouhang} for more in this direction, such as a Hopkins-Mahowald result for perfectoid rings and complete regular local rings.

The multiplication on $A$ descends to $\m W$, making it a \emph{Green functor}. In the situation of the preceding paragraph, the framework of equivariant homotopy theory implies that $\m W$ has even more structure: it is a \emph{Tambara functor}. We refer to \cite[Definition 2.11]{MazurTambara} for a full definition, but essentially this means that $\m W$ comes with \emph{norm maps} which are multiplicative analogues of transfers. In the remainder of this section we will try to identify the norm algebraically. This material is not used elsewhere in the paper, but we expect it to be important to further developments.

\newcommand{\Norm}[1]{N^{p^{#1}}_{p^{#1-1}}}
\begin{definition}
Let $A$ be a perfect prism. A function $A\morph^{\frN} A$ is \emph{a lift of the norm $\Norm n$} if
\[\xymatrix{
  A \ar[r]^-{\frN} \ar[d] & A \ar[d]\\
  A/[p^n]_A \ar[r]_-N & A/[p^{n+1}]_A
}\]
commutes, where
\[ A/[p^n]_A=W_n(A/[p]_A) \morph^N W_{n+1}(A/[p]_A)=A/[p^{n+1}]_A \]
is Angeltveit's norm map for Witt vectors \cite{AngeltveitNorm}. We also say that $\frN(x)$ lifts $N_{p^{n-1}}^{p^n}(x)$ if this diagram commutes for a particular $x\in A$.
\end{definition}

\begin{proposition}
\label{prop:norm-lift}
Let $A$ be a perfect prism. $\frN(x)$ is a lift of $\Norm n(x)$ if and only if
\begin{align*}
  \frN(x) &\equiv \phi(x) \bmod \phi^n([p]_A)\\
  \frN(x) &\equiv x^p \bmod [p^n]_A
\end{align*}
\end{proposition}
\begin{proof}
By functoriality, we may assume that $A$ is transversal, so that $A/[p]_A$ is $p$-torsionfree. This has the advantage that the ghost map $\gh\colon W_n(A/[p]_A)\to(A/[p]_A)^n$ is injective. The identification
\[  A/[p^n]_A \iso W_n(A/[p]_A) \]
is given in ghost coordinates by
\[ x \mapsto (\phi^{-(n-1)}(x)\bmod{[p]_A},\dotsc, x\bmod [p]_A). \]
By \cite[Theorem 1.4]{AngeltveitNorm}, the norm is given in ghost coordinates by
\begin{align*}
  W_n(A/[p]_A) &\morph^N W_{n+1}(A/[p]_A)\\
  (w_0,w_1,\dotsc,w_{n-1}) &\mapsto (w_0, w_0^p, w_1^p, \dotsc, w_{n-1}^p).
\end{align*}
Thus we get the congruences
\begin{align*}
  \frN(x) &\equiv \phi(x) \bmod \phi^n([p]_A)\\
  \frN(x) &\equiv x^p \bmod \phi^i([p]_A),\ 0\le i<n.
\end{align*}
Using transversality again, the second line is equivalent to $\frN(x) \equiv x^p \bmod [p^n]_A$ by \cite[Lemma 3.6]{AClBTrace}.
\end{proof}
Since $\phi$ and $x\mapsto x^p$ are both multiplicative, we immediately get $\frN(xy)\equiv\frN(x)\frN(y)\bmod[p^{n+1}]_A$. We do not expect there to be lifts of the norm which are multiplicative as maps $A\to A$, however.

\begin{proposition}
\label{prop:norms-q}
Let $A$ be a prism over $(\Z_p[q^{1/p^\infty}]^\wedge_{(p,q-1)},[p]_{q^{1/p}})$, and let $x,y\in A$ with $\delta(x)=\delta(y)=0$. Then the $q$-Pochhammer symbol
\[ (x,-y;q^{p^{n-1}})_p \defeq \prod_{i=0}^{p-1} (x-q^{ip^{n-1}}y) \]
is a lift of $\Norm n(x-y)$.
\end{proposition}
\begin{proof}
This follows from $q^{p^{n-1}}\equiv\zeta_p\bmod\phi^n([p]_{q^{1/p}})$, $q^{p^{n-1}}\equiv1\bmod [p^n]_{q^{1/p}}$, and $\phi(x-y)=x^p-y^p$.
\end{proof}

The following is an adaptation of Borger's formula for the norm \cite[Definition 1.1]{AngeltveitNorm}.

\begin{proposition}
The function
\[ \frN_n(x) = \phi(x) - \frac{\phi^n([p]_A)}{u_{n,n}}\delta(x) \]
is a lift of the norm $\Norm n$.
\end{proposition}
\begin{proof}
Certainly $\frN_n(x)\equiv\phi(x)\bmod\phi^n([p]_A)$, and $\frN_n(x)\equiv x^p\bmod[p^n]_A$ by Proposition \ref{prop:prism-extended}.
\end{proof}

\begin{warning}
The $q$-Pochhammer symbol $(x,-y;q^{p^{n-1}})_p$ does not agree with our handicrafted norm $\frN_n(x-y)$ as functions $A\to A$ unless $n=1,p=2$. It seems highly non-trivial to write down lifts of the norm that make sense for arbitrary prisms and which specialize to the $q$-Pochhammer symbol. The best we have found in this direction is the identity
\[ (x,-y;q)_3 = \psi^3(z) - [3]_q\delta_3(z) - [3]_q([2]_q-2)(z\delta_2(z) - \delta_3(z)), \]
where $z=x-y$, obtained through trial and error with the help of Sage.
\end{warning}

\newpage
\part{Results}
\label{part:results}
\section*{Notation}

Throughout $R$ is a fixed perfectoid ring, $A=\Ainf(R)$ equipped with its lift of Frobenius $\phi$, $[p]_A$ is a generator of $\ker(A\morph^\theta R)$, and
\begin{align*}
  [p^n]_A &= [p_A] \phi([p_A])\dotsm\phi^{n-1}([p]_A)\\
  [n]_A &\defeq [p^{v_p(n)}]_A\\
  [n]_A! &= [1]_A [2]_A \dotsm [n]_A.
\end{align*}
(This definition of $[n]_A$ does not agree with $[n]_{q^{1/p}}$, but it does up to units, which is enough for our purposes.)

We will simply write $\THH$, $\TC^-$, etc.\ for $\THH(R;\Z_p)$, $\TC^-(R;\Z_p)$, etc. We caution the reader that we write $\THH$ for \emph{both} the cyclonic spectrum and its underlying Borel $\T$-spectrum; the intended meaning should always be clear from context. We also write $\TC^-_*\defeq\pi_*\TC^-=\pi_*\TC^-(R;\Z_p)$, etc.

The circle group is denoted by $\T$. The ring of real representations of $\T$ is denoted $\RO(\T)$, while the ring of complex representations is denoted by $R(\T)$. We write $*$ for $\Z$-grading and $\rog$ for $RO(\T)$-grading. Occasionally we may write $\rog$ where only an actual representation would make sense; we hope this is clear from context. We write $a_V\in\pi_{-V}^G\S$ for the Euler class associated to an actual representation $V$.

$\lambda^n$ is the one-dimensional complex $\T$-representation where $z\in\T$ acts by $z^n$, $\lambda_i\defeq\lambda^{p^i}$, and $\lambda_\infty\defeq\lambda^0$. We also define
\begin{align*}
  [n]_\lambda &= 1 + \lambda + \dotsc + \lambda^{n-1}\\
  \{n\}_\lambda &= \lambda + \dotsc + \lambda^n.
\end{align*}

Given $\alpha\in\RO(\T)$, we write $\alpha^{(r)}$ for the fixed space $\alpha^{C_{p^r}}$ pulled back along the root isomorphism $\T\iso\T/C_{p^r}$. We then set $d_r(\alpha)=\dim_\C(\alpha^{(r)})$. Explicitly, $\lambda_\infty'=\lambda_\infty$, $\lambda_i'=\lambda_{i-1}$ for $i>1$, and $\lambda_0'=0$. Thus for a representation
\begin{align*}
  \alpha &= k_0 \lambda_0 + \dotsb + k_n\lambda_n + k_\infty\lambda_\infty,\\
  \intertext{we get}
  d_r(\alpha) &= k_r + k_{r+1} + \dotsb + k_n + k_\infty.
\end{align*}
When a single representation $\alpha$ is in play, we may abbreviate $d_r(\alpha)$ to $d_r$.

The (equivariant) sphere spectrum is denoted by either $\S$ or $S^0$. The smash product of spectra, including $G$-spectra, is denoted by $\tnsr$. We do not distinguish between an abelian group (or Mackey functor) and its associated Eilenberg-Mac Lane (equivariant) spectrum, nor between a $G$-space and its suspension spectrum.

We write $\regconn n X$ for the $n^\th$ slice cover of a $G$-spectrum $X$, and $\regslice n X$ for its $n$-slice.

\section{\tops{$\RO(\T)$}{RO(T)} calculations}
\label{sec:rog}
In this section we study the portion of the $\RO(\T)$-graded homotopy $\mpi_\rog\gT$ that will be needed for the slice computations in \sec\ref{sec:slice}. The Mackey functors we will be using are introduced in \sec\ref{sub:rog-mackey}. In \sec\ref{sub:rog-tools}, we review the tools we will use for computing these groups, and derive the $q$-gold relation (Lemma \ref{lem:q-au}). Our actual calculations are carried out in \sec\ref{sub:computations}.

The author learned to work with the gold elements from Zeng's beautiful paper \cite{Zeng}, and we urge the reader interested in learning to do these types of calculations to read Zeng's paper (as well as the earlier version \cite{Zengv1}, which shows several alternative ways to do the same calculation).

\subsection{Mackey functors}
\label{sub:rog-mackey}

\begin{definition}
The \emph{Witt Mackey functor} $\m W$ (c.f.\ \sec\ref{sub:prisms}) is the $\T$-Mackey functor given on objects by
\[ \m W(\T/C_{p^k}) = A/[p^{k+1}]_A. \]
Restrictions are given by the natural quotient maps, and for $i\le j$ the transfer is defined as multiplication by $\dfrac{[p^{j+1}]_A}{[p^{i+1}]_A}u_{i,j}=\phi^{i+1}([p^{j-i}]_A)u_{i,j}$, where $u_{i,j}\in A^\times$ is an appropriate unit (c.f.\ Proposition \ref{prop:prism-extended}). We will generally suppress this unit from the notation. We also write $\m W^{(n)}\defeq\mathop{\downarrow^\T_{C_{p^n}}}\m W$ for the restriction of $\m W$ to a $C_{p^n}$-Mackey functor. Abstractly we have $\m W(\T/C_{p^k}) \isom W_{k+1}(R)$, but the $A$-algebra structure will be important for us.
\end{definition}

\begin{definition}
Let $\m M$ be a $G$-Mackey functor, and let $H\le G$ be a subgroup. Then $\tr_H M$ is defined to be the sub-Mackey functor of $\m M$ generated under transfers by $\downarrow^G_H \m M$, while $\Phi^H \m M$ is defined as the quotient
\[ 0 \to \tr_H \m M \to \m M \to \Phi^H \m M \to 0. \]
Importantly, we have $\Phi^K \tr_H \m M = 0$ for $H\le K$.
\end{definition}

\begin{remark}
If $G$ is finite, then $\tr_H \m M$ is just the image of the canonical map $\uparrow^G_H \downarrow^G_H \m M \to \m M$.
\end{remark}

\begin{example}
\label{ex:mackey}
For $G=C_{p^2}$, these sequences are
\[\xymatrix{
   0 \ar[r] \mdown{} & A/[p]_A \mdown p \ar[r]^-{\phi([p^2]_A)} & A/[p^3]_A \mdown 1 \ar[r] & A/\phi([p^2]_A) \mdown 1 \ar[r] & 0 \mdown{}\\
   0 \ar[r] \mup{} \mdown{} & A/[p]_A \mdown p \mup 1 \ar[r]^-{\phi([p]_A)} & A/[p^2]_A \mdown 1 \mup {\phi^2([p]_A)} \ar[r] & A/\phi([p]_A) \mdown{} \mup{\phi^2([p]_A)} \ar[r] & 0 \mup{} \mdown{}\\
  0 \mup{} \ar[r] & A/[p]_A \mup 1 \ar[r]_-1 & A/[p]_A \mup{\phi([p]_A)} \ar[r] & 0 \mup{} \ar[r] & 0 \mup{}\\
  0 \ar[r] & \tr_e \m W^{(2)} \ar[r] & \m W^{(2)} \ar[r] & \Phi^e \m W^{(2)} \ar[r] & 0
}\]
and
\[\xymatrix{
   0 \ar[r] \mdown{} & A/[p^2]_A \mdown p \ar[r]^-{\phi^2([p]_A)} & A/[p^3]_A \mdown 1 \ar[r] & A/\phi^2([p]_A) \mdown{} \ar[r] & 0 \mdown{}\\
   0 \ar[r] \mup{} \mdown{} & A/[p^2]_A \mdown 1 \mup 1 \ar[r]^-1 & A/[p^2]_A \mdown 1 \mup {\phi^2([p]_A)} \ar[r] & 0 \mdown{} \mup{} \ar[r] & 0 \mup{} \mdown{}\\
  0 \mup{} \ar[r] & A/[p]_A \mup {\phi([p]_A)} \ar[r]_-1 & A/[p]_A \mup{\phi([p]_A)} \ar[r] & 0 \mup{} \ar[r] & 0 \mup{}\\
  0 \ar[r] & \tr_{C_p} \m W^{(2)} \ar[r] & \m W^{(2)} \ar[r] & \Phi^{C_p} \m W^{(2)} \ar[r] & 0
}\]
and
\[\xymatrix{
   0 \ar[r] \mdown{} & A/[p]_A \mdown p \ar[r]^-{\phi([p]_A)} & A/[p^2]_A \mdown p \ar[r] & A/\phi([p]_A) \mdown{p} \ar[r] & 0 \mdown{}\\
   0 \ar[r] \mup{} \mdown{} & A/[p]_A \mdown p \mup 1 \ar[r]^-{\phi([p]_A)} & A/[p^2]_A \mdown 1 \mup 1 \ar[r] & A/\phi([p]_A) \mdown{} \mup{1} \ar[r] & 0 \mup{} \mdown{}\\
  0 \mup{} \ar[r] & A/[p]_A \mup 1 \ar[r]_-1 & A/[p]_A \mup{\phi([p]_A)} \ar[r] & 0 \mup{} \ar[r] & 0 \mup{}\\
  0 \ar[r] & \tr_e \m W^{(2)} \ar[r] & \tr_{C_p} W^{(2)} \ar[r] & \Phi^e \tr_{C_p} \m W^{(2)} \ar[r] & 0.
}\]
\end{example}

\begin{abuse}
In our present case $G=C_{p^\infty}$, we will write $\tr_{C_n}$ when we really mean $\tr_{C_{p^{v_p(n)}}}$. We expect that our results hold as stated without this abuse when $\gT$ is regarded as an integral cyclonic spectrum, but we have not carefully verified this.
\end{abuse}


\subsection{Tools}
\label{sub:rog-tools}
The most powerful tool for computing $\RO(\T)$-graded $\THH$ is the isotropy separation square. This takes the form
\begin{equation}
\label{eq:iso-TF}
\vcenter{\xymatrix{
  \pi_\rog \Sigma\THH_{h\T} \ar[r] \ar@{=}[d] & \TF_\rog \ar[r]^-R \ar[d] & \TF_{\rog'} \ar[d]^-\varphi\\
  \pi_\rog \Sigma\THH_{h\T} \ar[r] & \TCmin_\rog \ar[r] & \TP_\rog
}}\end{equation}
at infinite level, and
\begin{equation}
\label{eq:iso-TR}
\vcenter{\xymatrix{
  \pi_\rog \THH_{hC_{p^n}} \ar[r] \ar@{=}[d] & \TR^{n+1}_\rog \ar[r]^-R \ar[d] & \TR^n_{\rog'} \ar[d]^-\varphi\\
  \pi_\rog \THH_{hC_{p^n}} \ar[r] & \pi_\rog\THH^{hC_{p^n}} \ar[r] & \pi_\rog\THH^{tC_{p^n}}
}}
\end{equation}
for finite subgroups.

We will understand $\TF_\rog$ by relating it to $\TCmin_\rog$ and $\TP_\rog$. In doing so, we will rely heavily on the following $\RO(\T)$-graded version of Tsalidis' theorem \cite[Theorem 2.4]{Tsalidis}. Our reliance on this result is the main difficulty in generalizing our results to imperfect prisms.

\begin{theorem}[{\cite[Theorem 5.1]{ROS1TR}}]
Write $\rog=\alpha+*$ for $*\in\Z$ and $\alpha\in\RO(\T)$. The vertical maps in the isotropy separation square \eqref{eq:iso-TR} are isomorphisms for
\[ *\ge2\max\{-d_1(\alpha),\dotsc,-d_n(\alpha)\}. \]
Consequently, the vertical maps in \eqref{eq:iso-TF} are isomorphisms for
\begin{align*}
* &\ge2\max\{-d_i(\alpha)\mid i\ge1\}\\
&\ge0.
\end{align*}
\end{theorem}

To begin, we had better know the $\Z$-graded homotopy of $\TF$ and $\TR^n$, which was not spelled out in \cite{BMS2}. Abstractly the following tells us that $\TR^n_* = W_n(R)[\sigma]$; this was already known (at least on $\pi_0$) by \cite[Theorem 3.3]{HMFinite}), but the $A$-module structure is crucial for what follows.
\begin{proposition}
The $\Z$-graded homotopy of $\TF$ and $\TR^{n+1}$ are
\begin{align*}
  \TF_* &= A[\sigma]\\
  \TR^n_* &= (A/[p^n]_A)[\sigma]
\end{align*}
\end{proposition}
\begin{proof}
We already know that $\TP_*=A[t^\pm]$, and we have by \cite[Remark 6.6]{BMS2} that the Tate cohomology at finite level is
\[ \pi_* \THH^{tC_{p^n}} = (A/\phi([p^n]_A))[t^{\pm}]. \]
The result now follows from Tsalidis' theorem (in $\Z$ degrees). Note the map $\varphi$ is $\phi$-linear with respect to $A$-module structures, which is why we get $(A/[p^n]_A)[\sigma]$ instead of $(A/\phi([p^n]_A))[t^{-1}]$.
\end{proof}

Our next task is to understand the bottom row of the isotropy separation square. As in the $\Z$-graded case, this can be computed by a spectral sequence
\[ H^*(B\T, \TR^1_{\alpha+*}) \conv \TCmin_{\alpha+*}. \]
On the underlying, we have $\TR^1_{\alpha+*} = \TR^1_{|\alpha|+*}$, and we let $u_\alpha\in\TR^1_{|\alpha|-\alpha}$ be the image of $1\in\TR^1_0\iso\TR^1_{|\alpha|-\alpha}$. The action of $\T$ on this group is necessarily trivial, and the spectral sequence collapses for degree reasons, so $u_\alpha$ lifts to $\TCmin_{|\alpha|-\alpha}$; by construction, it is invertible. A similar discussion applies to $\TP$, so the whole bottom row of the isotropy separation sequence is $u_{\lambda_i}$-periodic. We also recall (\sec\ref{sub:rog-grading}) that the right column is $a_{\lambda_0}$-periodic. As Zeng \cite{Zeng} puts it, the clash between the $u_{\lambda_i}$-periodicity of $\Sigma\THH_{h\T}$ and the $a_{\lambda_0}$-periodicity of the rightmost $\TF$ produces a lot of classes in the middle $\TF$. Note that the classes $u_{\lambda_i}$ lift to $\TF_{2-\lambda_i}$ by Tsalidis' theorem, but are no longer invertible there.

\begin{remark}
We explain the relation between our account in terms of the gold elements and that given by Angeltveit-Gerhardt. Their analysis of the Tate spectral sequence begins by observing that the $E^2$ page of the $\alpha$-graded HOSS, HFPSS, or TSS is simply a shift by $2d_0(\alpha)$ of the usual one: this is $u_\rog$-periodicity. Then they point out that while the Tate \emph{spectral sequence} obviously depends on $d_0(\alpha)$, the Tate \emph{spectrum} $\THH^t$ only depends on $\alpha'$ (which is not true of $\THH_h$ or $\THH^h$). To exploit this, they reindex the $\alpha$-graded Tate spectral sequence to make it isomorphic to the usual one, but with a different meaning of ``first/second quadrant'': this trick is essentially $a_{\lambda_0}$-periodicity. The correspondence between our names and Angeltveit-Gerhardt's is $u_\alpha^{-1} \longleftrightarrow t^{d_0(\alpha)}[-\alpha]$.
\end{remark}

In principle---although it is not the strategy we will use here---, this allows one to completely compute $\TF_\rog$, using $a_{\lambda_0}$-periodicity for the induction and $u_{\lambda_i}$-periodicity for the base case. The remaining ingredient needed is to understand the relation between the various classes $a_{\lambda_i}$, $u_{\lambda_i}$, $\sigma$, and $t$. For this, we will bring in the other tool we have for computing $\RO(\T)$-graded homotopy (and the one we will use in this paper): cell structures, c.f.\ \sec\ref{sub:T-repns}. As $C_n$-spectra, the representation spheres $S^{\lambda_j}$ have cell structures of length 2, which we call the ``long cell structure''. However, as pointed out to us by Hill, these have ``short cell structures'' \emph{as $\T$-spectra}: there is an exact triangle
\[ \T/C_{p^j+} \to S^0 \to S^{\lambda_j}. \]
The following is a $q$-deformation of the ``gold relation'' from \cite[Lemma 3.6(vii)]{HHR_KR}.

\begin{lemma}[$q$-gold relation]
\label{lem:q-au}
The following relations hold in $\TF_\rog$. For $j\ge0$,
\[ \sigma a_{\lambda_j} = [p^{j+1}]_A u_{\lambda_j}. \]
For $0\le i < j$,
\begin{align*}
  a_{\lambda_j} u_{\lambda_i}
    &= \tr_{C_{p^i}}^{C_{p^j}}(1) a_{\lambda_i} u_{\lambda_j}\\[.5em]
    &= \frac{[p^{j+1}]_A}{[p^{i+1}]_A} a_{\lambda_i} u_{\lambda_j}\\[.5em]
    &= \phi^{i+1}([p^{j-i}]_A) a_{\lambda_i} u_{\lambda_j}.
\end{align*}
\end{lemma}
\begin{proof}
By Tsalidis' theorem, we have that $\TF_{\lambda_j} = \TCmin_{\lambda_j}$, and $\TCmin_{\lambda_j} = A\<\sigma u_{\lambda_j}^{-1}\>$ since $\TCmin$ is $u_{\lambda_j}$-periodic.

On the other hand, mapping the short cell structure into $\THH$ gives a short exact sequence
\[\xymatrix{
  0 \ar[r] & \TF_{\lambda_j} \ar[r]^-{a_{\lambda_j}} \ar@{=}[d] & \TF_0 \ar[r] \ar@{=}[d] & \TR^{j+1}_0 \ar[r] \ar@{=}[d] & 0\\
  0 \ar[r] & A\<\sigma u_{\lambda_j}^{-1}\> \ar[r] & A \ar[r] & A/[p^{j+1}]_A \ar[r] & 0
}\]
This gives the relation $\sigma u_{\lambda_j}^{-1} a_{\lambda_j} = [p^{j+1}]_A$, which proves the first claim. Now write
\[ \sigma = [p^{i+1}]_A\frac{u_{\lambda_i}}{a_{\lambda_i}} = [p^{j+1}]_A\frac{u_{\lambda_j}}{a_{\lambda_j}}. \]
Since $[p^{j+1}]_A = \phi^{i+1}([p^{j-i}]_A) [p^{i+1}]_A$, multiplying out gives the second claim.
\end{proof}

In particular
\begin{align*}
  a_{\lambda_j} &= \phi([p^j]_A) a_{\lambda_0} u_{\lambda_j}u_{\lambda_0}^{-1},\\
  t &= a_{\lambda_0}u_{\lambda_0}^{-1},
\end{align*}
so from now on we will stop writing $t$. We also see that when working in $\TC^-$ or $\TP$ (where the $u_{\lambda_j}$ are invertible), it suffices to consider only the classes $\sigma$, $a_{\lambda_0}$, and $u_{\lambda_j}$.

\begin{remark}
The two forms of the $q$-gold relation,
  \begin{align*}
  \sigma a_{\lambda_j} &= [p^{j+1}]_A u_{\lambda_j}\\
  a_{\lambda_j} u_{\lambda_i} &= \phi^{i+1}([p^{j-i}]_A) a_{\lambda_i} u_{\lambda_j},
  \end{align*}
  can be combined by formally setting $u_{\lambda_{-1}}\defeq\sigma$, $a_{\lambda_{-1}}\defeq 1$. This is one possible explanation of the `philosophical' role of the B\"okstedt generator.
\end{remark}

\subsection{Computations}
\label{sub:computations}

\begin{lemma}
\label{lem:rog-pos}
For $j<n$,
\[
  \TR^{n+1}_{\lambda_j + *} =
  \begin{cases}
    \tr_{C_{p^j}}^{C_{p^n}}(1) u_{\lambda_j}^{-1} & * = -2\\
    \sigma^i u_{\lambda_j}^{-1} & *=2i\ge0\\
    0 & \text{else}
  \end{cases}
\]
These generate the Mackey functors $\tr_{C_{p^j}}\m W^{(n)}$ for $*=-2$ and $\m W^{(n)}$ for $*=2i\ge0$.
\end{lemma}
\begin{proof}
Since $\TR^{n+1}_*$ is even, the long cell structures give long exact sequences
\[ 0 \to \TR^{j+1}_{2+*} \xra{V^{n-j}} \TR^{n+1}_{\lambda_j+*} \morph^{a_{\lambda_j}} \TR^{n+1}_* \xra{F^{n-j}} \TR^{j+1}_* \to 0, \]
described algebraically by Lemma \ref{lem:a-seq-bms}. When $*=-2$, this says that the transfer $\TR^{j+1}_0\to\TR^{n+1}_{\lambda_j-2}$ is an isomorphism. The case $*=2i\ge0$ follows from Tsalidis' theorem.
\end{proof}

While the statement of the next lemma is imposing, its content is very simple. To compute $\TF_{2i-\alpha}$, we start with $\TF_{2i}$ generated by $\sigma^i$, and exchange $\sigma$s for $u_{\lambda_j}$s, starting with the largest value of $j$. If we run out of $\sigma$s, we continue by adding $a_{\lambda_j}$s to land in the correct $\RO(\T)$-graded degree. Alternatively, we can start with $\TF_{-\alpha}$ generated by $a_\alpha$, and exchange $a_{\lambda_j}$s for $u_{\lambda_j}$s, or $\sigma$s once we run out of $a_{\lambda_j}$s. For example:
\begin{align*}
  \TF_4 &= A\<\sigma^2\> & \TF_{4-2\lambda_0-\lambda_1} &= A\<a_{\lambda_0}u_{\lambda_0}u_{\lambda_1}\>\\
  \TF_{4-\lambda_1} &= A\<\sigma u_{\lambda_1}\> & \TF_{2-2\lambda_0-\lambda_1} &= A\<a_{\lambda_0}^2 u_{\lambda_1}\>\\
  \TF_{4-\lambda_0-\lambda_1} &= A\<u_{\lambda_0} u_{\lambda_1}\> & \TF_{-2\lambda_0-\lambda_1} &= A\<a_{\lambda_0}^2 a_{\lambda_1}\>
\end{align*}

\begin{lemma}
\label{lem:rog-neg}
The portion of $\TF_\rog$ of the form $\rog=*-\alpha$, with $*\in\Z$ and $\alpha$ an actual representation, is
\[ A[\sigma,a_{\lambda_i},u_{\lambda_i}]. \]
Explicitly, let $\alpha = k_0 \lambda_0 + \dotsb + k_{n-1} \lambda_{n-1}$ be an actual, fixed-point free representation of $\T$. Write
\[ \alpha[s,t) \defeq k_s \lambda_s + \dotsb + k_{t-1} \lambda_{t-1}. \]
Then for $i\ge0$,
\begin{align*}
  \TF_{2i-\alpha} &=
  \begin{cases}
    A\<a_{\alpha[0,r-1)} a_{\lambda_{r-1}}^{d_{r-1}(\alpha)-i} u_{\lambda_{r-1}}^{i-d_r(\alpha)} u_{\alpha[r,n)}\> & d_r(\alpha) \le i < d_{r-1}(\alpha)\\
    A\<\sigma^{i-d_0(\alpha)} u_\alpha\> & d_0(\alpha) \le i
  \end{cases}\\[1em]
 \TR^{n+1}_{2i-\alpha} &=
  \begin{cases}
    A/\phi([p^{n-r}]_A)\<a_{\alpha[0,r-1)} a_{\lambda_{r-1}}^{d_{r-1}(\alpha)-i} u_{\lambda_{r-1}}^{i-d_r(\alpha)} u_{\alpha[r,n)}\> & d_r(\alpha) \le i < d_{r-1}(\alpha)\\
    A/[p^{n+1}]_A\<\sigma^{i-d_0(\alpha)} u_\alpha\> & d_0(\alpha) \le i
  \end{cases}
\end{align*}
When $d_0(\alpha)\le i$, the corresponding Mackey functor is $\m W$. When $d_r(\alpha)\le i<d_{r-1}(\alpha)$, the corresponding Mackey functor is $\Phi^{C_{p^{r-1}}} \m W$.
\end{lemma}
\begin{proof}

We proceed by induction on $(k_0,\dotsc,k_{n-1})\in\N^n$ in dictionary order. Let $\beta$ be an actual $\T$-representation whose restriction to $C_{p^r}$ is trivial, and let $\alpha = \beta + \lambda_r$. The short cell structure gives
\[\xymatrix{
  0 \ar[r] &
  \TF_{2i-\beta} \ar[r]^-{a_{\lambda_r}} \ar@{=}[d] \ar@{=}[d] &
  \TF_{2i-\alpha} \ar[r] \ar@{=}[d] &
  \TR^{r+1}_{2i-2d_r(\alpha)} \ar[r] \ar@{=}[d] &
  0\\
  0 \ar[r] &
  A\<g_\beta\> \ar[r] &
  A\<g_\alpha\> \ar[r] &
  A/[p^{r+1}]_A\<\sigma^{i-d_r(\alpha)}\> \ar[r] &
  0
}\]
for some generators $g_\beta$, $g_\alpha$. From this we see that
\[
  g_\alpha = 
  \begin{cases}
    g_\beta a_{\lambda_r}  & d_r(\alpha)>i\\
    g_\beta u_{\lambda_r}\sigma^{-1} & d_r(\alpha) \le i
  \end{cases}
\]
which implies the description of the generators. The $A$-module structure and Mackey structure follow from the fact that $a_{\lambda_r}$ kills transfers from $C_{p^r}$, along with $\TR^{n+1}=\TF/a_{\lambda_n}$ (the short cell structure).
\end{proof}

In the case of a perfect $\F_p$-algebra, these groups were already known by \cite[Proposition 9.1]{HMFinite} and \cite[Theorem 8.3]{ROS1TR}. Our result is finer, as it identifies the multiplicative and Mackey structure.

\begin{remark}
We point out some important structural features of $\TF_\rog$ that are implicit in the preceding computations; proofs will be given in \cite{SulROGTHH}. For $\alpha\in\RU(\T)$, one can show that the (nonzero) groups $\TF_\alpha$ are all isomorphic to $A$, and vanish for $d_\infty(\alpha)<0$; one can even determine the generators relatively quickly. On the other hand, the groups $\TF_{\alpha-1}$ are all torsion, need not be cyclic, and are more difficult to determine.

This is why we used the long cell structure in the proof of Lemma \ref{lem:rog-pos} rather than the short cell structure: $\TF_{\lambda_j-2}$ is \emph{zero}, and instead $\TR^{n+1}_{\lambda_j-2}=\TF_{\lambda_j+\lambda_n-3}$. It is easier to calculate the $\TR^{n+1}_{}$ group directly than to calculate the latter $\TF$ group. This problem does not arise in Lemma \ref{lem:rog-neg} since there are no odd-dimensional $\TF$ classes in that range.

There is also a ``gap'' phenomenon present in Lemma \ref{lem:rog-pos}. For $k\ge1$ the groups $\TF_{k\lambda_j-(2k-1)},\dotsc,\TF_{k\lambda_j-3}$ are nonzero, and yet $\TF_{k\lambda_j-1}=0$. This is essential to the proof of Theorem \ref{thm:slice-tower}, in order to be able to isolate a single Mackey functor by ``clipping off the Tsalidis part''. It also allows us, for an actual representation $V$, to read off the homotopy Mackey functor $\mpi_{V+*}\THH$ as quotients of $\TF_{V+*}$ for $*\ge0$ (Tsalidis' theorem guarantees we can do this for $*\ge1$). This gap is easy to understand using the HOTFSS (\cite[\sec3]{ROS1TR}): for $\alpha=k_0\lambda_0+\dotsc+k_{n-1}\lambda_{n-1}$, this is the spectral sequence computing $\TF_{\alpha+*}$ from
\begin{gather*}
  \pi_{\alpha+*}(\Sigma\THH_{h\T})\\
  a_{\lambda_0}^\pm\pi_{\alpha'+*}(\Sigma\THH_{h\T})\\
  \vdots\\
  a_{\lambda_0}^\pm\dotsm a_{\lambda_{n-2}}^{\pm} \pi_{\alpha^{(n-1)}+*}(\Sigma\THH_{h\T})\\
  a_{\lambda_0}^\pm\dotsm a_{\lambda_{n-1}}^{\pm} \TF_*
\end{gather*}
The integer degrees of the orbit terms depend on the dimension sequence of $\alpha$, but the bottom term will always start in degree 0. For virtual representations that are not too wild (in particular, those of the form $*+V$ for an actual representation $V$), this kills any potential contributions to degree $-1$.
\end{remark}

\section{Slices of THH}
\label{sec:slice}
In this section, we study the slice filtration on $\THH$ and prove the main theorems. In \sec\ref{sub:thh-slices}, we identify the slices and slice covers of $\THH$ in terms of $\RO(\T)$-graded suspensions (Theorem \ref{thm:slice-tower}). In \sec\ref{sub:filtration-tf}, we work out the filtration this gives on homotopy groups. We analyze the slice spectral sequence in \sec\ref{sub:rsss}.

\subsection{The slice tower}
\label{sub:thh-slices}
We begin by explaining the idea. Non-equivariantly, B\"okstedt periodicity implies that the Whitehead tower of $\THH$ is
\[\xymatrix{
  \gT & \Sigma^2\gT \ar[l]_-\sigma & \Sigma^4\gT \ar[l]_-\sigma & \ar[l] \cdots  
}\]
Since the slice filtration restricts to the Postnikov filtration on the underlying spectrum, it is reasonable to guess that equivariantly, the slice covers will be given by
\[ \regconn{2n}\gT = \Sigma^{V_n}\gT \]
for appropriate $\T$-representations $V_n$. Since
\[ (S^V \tnsr \gT)^{\Phi H} = S^{V^H} \tnsr \THH \]
by cyclotomicity, we are reduced to finding $\T$-representations $V_n$ with $d_0(V_n)=n$ and
\[ 2d_k(V_n) \ge \lc\frac{2n}{p^k}\rc \quad\text{for all }k\ge0. \]

Using the irreducible representations $\lambda_i$, these are uniquely determined $p$-locally. From the $p$-typical description it is not so easy to see a pattern, but contemplating the $q$-Legendre principle (Observation \ref{obs:q-lgndr}), we find that $V_n = [n]_\lambda$. For example, with $p=3$ we get
\begin{align*}
  2d_0(V_4) &= 8\\
  2d_1(V_4) &\ge 3\\
  2d_r(V_4) &\ge 1 \quad \forall r\ge2;
\end{align*}
these force $V_4 = 2\lambda_0 + \lambda_1 + \lambda_\infty$, which we recognize as $p$-locally equivalent to $\lambda^0 + \lambda^1 + \lambda^2 + \lambda^3 = [4]_\lambda$.

Educated guesses like this are actually a standard way to compute slices, thanks to the recognition principle \cite[Lemma 4.16]{HHR}. To verify it, we must produce maps $\Sigma^{[n+1]_\lambda}\gT \to \Sigma^{[n]_\lambda}\gT$ restricting to $\sigma$ on underlying spectra, and check that the cofibers are $n$-slices.

\begin{theorem}
\label{thm:slice-tower}
There is a canonical identification of exact triangles
\[\xymatrix{
  \regconn{2n+2}\gT \ar[r] \ar@{=}[d] & \regconn{2n}\gT \ar[r] \ar@{=}[d] & \regslice{2n}\gT \ar@{=}[d] \\
  \Sigma^{\drep{n+1}}\gT \ar[r] & \Sigma^{\drep n} \gT \ar[r] & \Sigma^{\{n\}_\lambda}\tr_{C_n}\m W
}\]
The bottom row is identified with
\[ \Sigma^{\{n\}_\lambda}\left(\Sigma^{\lambda_\infty} \gT \xra{\sigma u_{\lambda^n}^{-1}} \Sigma^{\lambda_\infty-\lambda^n} \gT \to \tr_{C_n}\m W \right) \]
or equivalently with
\[ \Sigma^{[n]_\lambda}\left(\Sigma^{\lambda^n} \gT \xra{\sigma u_{\lambda^n}^{-1}} \gT \to \m W/[pn]_A \right) \]
\end{theorem}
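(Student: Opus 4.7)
My plan is to invoke the HHR slice recognition principle \cite[Lemma 4.16]{HHR}: it suffices to produce a descending filtration whose terms $\Sigma^{[n]_\lambda}\gT$ lie in $\regconn{2n}$, whose successive cofibers are exact $2n$-slices, and which on underlying spectra reproduces the Whitehead tower of $\THH$. The transition map $\Sigma^{[n+1]_\lambda}\gT \to \Sigma^{[n]_\lambda}\gT$ will be multiplication by $\sigma u_{\lambda^n}^{-1} \in \pi^{\T}_{\lambda^n}\gT$, combining the B\"okstedt generator $\sigma \in \pi_2\gT$ with the inverse of the orientation class $u_{\lambda^n}$. Since $[n+1]_\lambda - [n]_\lambda = \lambda^n$ and $u_{\lambda^n}$ is a unit on the underlying spectrum, this is well-typed and restricts to multiplication by $\sigma$ underlying.

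First I verify $\Sigma^{[n]_\lambda}\gT \in \regconn{2n}$ via the Hill-Yarnall criterion. Cyclotomicity gives $\Phi^{C_{p^k}}(\Sigma^{[n]_\lambda}\gT) = \Sigma^{2 d_k([n]_\lambda)}\THH$, and connectivity of $\THH$ reduces the criterion to the inequality $2 d_k([n]_\lambda) \ge \lc 2n/p^k \rc$. This is precisely the content of Observation \ref{obs:q-lgndr}: the $q$-Legendre principle computes the $C_{p^k}$-fixed dimensions of $[n]_\lambda$, and the required slice bounds are met sharply at $k=0$ and with the correct rounding for $k \ge 1$.

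Next I identify the cofiber. Underlying, B\"okstedt periodicity gives cofiber $R$ concentrated in degree $2n$. To promote this to $\Sigma^{\{n\}_\lambda}\tr_{C_n}\m W$, I compute each geometric fixed point by cyclotomicity: $\Phi^{C_{p^k}}$ of the cofiber is the cofiber of a $\sigma^{m_k}$-multiplication on $\THH$, where $m_k = d_k([n+1]_\lambda) - d_k([n]_\lambda)$ is prescribed by the $q$-Legendre formula. These truncated $R[\sigma]/\sigma^{m_k}$ modules, taken with their restrictions and transfers, must assemble into a Mackey functor matching the Witt Mackey functor structure on $\m W$, and the combinatorics of the $m_k$'s --- repackaged via the Ansch\"utz-le Bras $q$-deformation of Legendre's formula --- fixes the quotient by $[pn]_A$.

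The main obstacle will be this last identification: pinning down the full Mackey functor structure of the cofiber (not merely its underlying spectrum and geometric fixed points), so that the transfers and restrictions match those of $\tr_{C_n}\m W/[pn]_A$. Granting this, the equivalence of the two displayed forms of the bottom row reduces to a formal $\RO(\T)$-bookkeeping calculation, since the shifts $\{n\}_\lambda + \lambda_\infty$ and $[n]_\lambda + \lambda^n$ agree as virtual $\T$-representations, and $u_{\lambda^n}$ mediates the corresponding untwisting of the map $\sigma u_{\lambda^n}^{-1}$.
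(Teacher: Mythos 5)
Your overall strategy---the recognition principle, the filtration by $\Sigma^{[n]_\lambda}\gT$, the transition map $\sigma u_{\lambda^n}^{-1}$, and the Hill--Yarnall connectivity check via $2d_k([n]_\lambda)=2\lceil n/p^k\rceil \ge \lceil 2n/p^k\rceil$---is exactly the paper's. But two of the four steps in the paper's argument are missing or not viable as you describe them. First, you propose to identify the cofiber by computing its geometric fixed points via cyclotomicity and then ``assembling'' the Mackey functor structure; as you yourself flag, the collection of geometric fixed points does not determine a genuine $\T$-spectrum, and no assembly argument is supplied. The paper does not reconstruct the cofiber from $\Phi^{C_{p^k}}$ at all: the exact triangle $\Sigma^{\lambda_\infty}\gT \xra{\sigma u_{\lambda^n}^{-1}} \Sigma^{\lambda_\infty-\lambda^n}\gT \to \tr_{C_n}\m W$ is imported wholesale from Lemma \ref{lem:rog-pos}, a prior computation of the $\RO(\T)$-graded homotopy of $\gT$, and the alternative form $\m W/[pn]_A$ comes from Lemma \ref{lem:rog-neg} or the cell structure of Example \ref{ex:rog-m-neg}. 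Without that input your argument stalls precisely at the point you identify as the main obstacle.

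Second, and more seriously because you do not flag it: you never verify that the cofiber is slice \emph{co}connective, i.e.\ that $\Sigma^{\{n\}_\lambda}\tr_{C_n}\m W \le 2n$. The recognition principle requires the cofibers to be honest $2n$-slices, and the upper bound is not a consequence of any geometric-fixed-point or connectivity computation: it requires showing that $[S^{s\rho_{p^k}}, \downarrow^\T_{C_{p^k}}\Sigma^{\{n\}_\lambda}\m M]=0$ for all $sp^k>2n$. In the paper this is Lemma \ref{lem:mack-trunc}, a separate representation-theoretic argument (decomposing the relevant virtual representation into irreducibles and invoking Lemma \ref{lem:mack-bound}) that in fact works for an arbitrary Mackey functor $\m M$. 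The lower bound $\Sigma^{\{n\}_\lambda}\tr_{C_n}\m W \ge 2n$ is also checked separately in the paper, by computing the connectivities of $\Phi^{C_{p^k}}S^{\{n\}_\lambda}$ and of $\Phi^{C_{p^k}}\tr_{C_n}\m W$ and using that $k\le v_p(n)$ forces $2\lceil\tfrac{n+1}{p^k}\rceil-2=\lceil\tfrac{2n}{p^k}\rceil$; your sketch only gestures at this.
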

\begin{proof}
There are four steps.
\begin{enumerate}
\item Show that $\Sigma^{\drep n}\gT$ is slice $2n$-connective.

\item Produce the map $\Sigma^{\drep{n+1}}\gT \morph \Sigma^{\drep n} \gT$ with cofiber $\Sigma^{\{n\}_\lambda} \tr_{C_n} \m W$.

\item Check that $\Sigma^{\{n\}_\lambda}\tr_{C_n}\m W$ is $\ge 2n$.

\item Check that $\Sigma^{\{n\}_\lambda}\tr_{C_n}\m W$ is $\le 2n$.
\end{enumerate}
We have already observed (Corollary \ref{cor:irred-decomp}) that
$
  d_s([n]_\lambda) = \lc\frac n{p^s}\rc,
$ 
which verifies (1) by the preceding discussion together with the inequality $2\lc\frac n{p^s}\rc \ge \lc\frac{2n}{p^s}\rc$. The exact triangle
\[ \Sigma^{\lambda_\infty} \gT \xra{\sigma u_{\lambda^n}^{-1}} \Sigma^{\lambda_\infty-\lambda^n} \gT \to \tr_{C_n}\m W \]
follows from Lemma \ref{lem:rog-pos}, which takes care of (2).

The alternative expression for the cofiber sequence can be obtained either from Lemma \ref{lem:rog-neg}, or by using cell structures (Example \ref{ex:rog-m-neg}) to see that
\[
  \Sigma^{\lambda^n-\lambda_\infty} \tr_{C_n} \m W = \m W/[pn]_A.
\]

Now we check that $\Sigma^{\{n\}_\lambda}\tr_{C_n}\m W \ge 2n$. Let us write $\conn X$ for the connectivity of an ordinary spectrum $X$. We must show that
\[ \conn\Phi^{C_{p^k}}\left(\Sigma^{\{n\}_\lambda}\tr_{C_n}\m W\right) \ge \left\lceil\frac{2n}{p^k} \right\rceil. \]
We have
\begin{align*}
  \conn\Phi^{C_{p^k}} S^{\{n\}_\lambda} &= 2\lc\frac{n+1}{p^k}\rc - 2\\
  \conn\Phi^{C_{p^k}} \tr_{C_n}\m W &= \begin{cases}0&k\le v_p(n)\\\infty&k>v_p(n)\end{cases}
\end{align*}
which proves (3) since $k\le v_p(n)$ implies that $2\lc\frac{n+1}{p^k}\rc-2 = \frac{2n}{p^k}=\lc\frac{2n}{p^k}\rc$.

Finally, (4) follows from Lemma \ref{lem:mack-trunc} below.
\end{proof}

\begin{lemma}
\label{lem:mack-trunc}
If $\m M$ is any $\T$-Mackey functor, then $\Sigma^{\{n\}_\lambda}\m M \le 2n$.
\end{lemma}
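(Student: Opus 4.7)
The plan is to invoke the Hill-Yarnall characterization of the regular slice filtration, which states that a $\T$-spectrum $X$ lies in $\regtrunc{m}$ if and only if, for every finite subgroup $C_{p^k}\subseteq\T$, the genuine homotopy groups $\pi^{C_{p^k}}_i X$ vanish for all $i > \lfloor m/p^k\rfloor$.

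Since $\m M$ is a $\T$-Mackey functor, $\pi^H_i\m M = 0$ for every subgroup $H$ and every $i\ne 0$, so $\m M$ itself lies in $\regtrunc{0}$. Combining the suspension identity $\pi^{C_{p^k}}_i(\Sigma^{\{n\}_\lambda}\m M) = \pi^{C_{p^k}}_{i-\{n\}_\lambda}\m M$ with the fixed-dimension identity
\[ 2d_{p^k}(\{n\}_\lambda) = 2\lceil (n+1)/p^k\rceil - 2 = 2\lfloor n/p^k\rfloor \le \lfloor 2n/p^k\rfloor, \]
extracted from step (3) of Theorem \ref{thm:slice-tower}, would then match the Hill-Yarnall bound for $\regtrunc{2n}$: the spectrum $\Sigma^{\{n\}_\lambda}\m M$ cannot have genuine homotopy above the desired range, because the ``shift'' contributed by $\{n\}_\lambda$ exactly absorbs the Hill-Yarnall allowance at each $p^k$.

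The main technical obstacle is the virtual nature of $\{n\}_\lambda = [n]_\lambda + \lambda^n - \lambda_\infty$: the passage from a fixed-dimension bound to vanishing of $RO(\T)$-graded homotopy of $\m M$ cannot be done by naive dimension counting, since the $-\lambda_\infty$ summand affects the $RO$-degree in a way that is not captured by integer dimension alone. I would resolve this via a cell-structure argument using the cofiber sequences of Section \ref{sec:rog} (especially Lemmas \ref{lem:rog-pos} and \ref{lem:rog-neg}), writing $S^{\{n\}_\lambda}\wedge\m M$ as a finite iterated cofiber of $\T$-spectra each of slice dimension at most $2n$; the closure of $\regtrunc{2n}$ under cofibers in the slice $t$-structure then completes the argument.
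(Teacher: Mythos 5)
There is a genuine gap: the step you flag as ``the main technical obstacle'' is in fact the entire content of the lemma, and your proposal defers it rather than resolving it. Two specific problems. First, the coconnectivity criterion you invoke is not the right one. The Hill--Yarnall homotopy-group/geometric-fixed-point characterization applies to the slice \emph{connective} part (and is exactly what the paper uses in step (3) of Theorem \ref{thm:slice-tower}); membership in $\regtrunc{2n}$ is instead tested by the vanishing of maps \emph{from} slice cells, i.e.\ $[S^{s\rho_{p^k}}, \downarrow^\T_{C_{p^k}}\Sigma^{\{n\}_\lambda}\m M]=0$ for all $sp^k>2n$. This is an $RO(\T)$-graded condition on $\m M$ at the virtual degrees $s\rho_{p^k}-\downarrow^\T_{C_{p^k}}\{n\}_\lambda$, not a condition on integer-graded genuine homotopy groups, so the fixed-point dimension count $2\lceil(n+1)/p^k\rceil-2\le\lfloor 2n/p^k\rfloor$ does not by itself yield anything. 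Second, your proposed resolution --- writing $S^{\{n\}_\lambda}\wedge\m M$ as an iterated cofiber of pieces ``each of slice dimension at most $2n$'' --- is not carried out: identifying those pieces and proving they are $\le 2n$ is precisely the assertion of the lemma, so as written the argument is circular (or empty).

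For comparison, the paper's proof proceeds by restricting $\{n\}_\lambda$ to $C_{p^k}$, forming the virtual representation $V=s\rho_{p^k}-\downarrow^\T_{C_{p^k}}\{n\}_\lambda$, and checking from the hypothesis $sp^k>2n$ that in the irreducible decomposition of $V$ every coefficient is nonnegative and the coefficient of the trivial summand $\lambda_\infty$ is strictly positive. The punchline is then a separate vanishing statement (Lemma \ref{lem:mack-bound}): $[S^V,\m M]=0$ for any Mackey functor $\m M$ and any such $V$. That vanishing result is the ingredient your outline is missing; if you want to keep your framing, it is what must replace the unspecified cell-structure step.
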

\begin{proof}
This requires showing that
\[ [ S^{s\rho_{p^k}}, \downarrow^\T_{C_{p^k}} \Sigma^{\{n\}_\lambda} \m M] = 0\]
for all $sp^k > 2n$. We write $r\rho_{p^k} = \frac r2[p^k]_\lambda$, and note that
\begin{align*}
  \downarrow^\T_{C_{p^k}} S^{\{n\}_\lambda} \m M = \lf\frac n{p^k}\rf \lambda_\infty + \sum_{r=0}^{k-1}\left(\lf\frac n{p^r}\rf - \lf\frac n{p^{r+1}}\rf\right) \lambda_r
\end{align*}
so we need to compute $[S^V, \m M]$ where
\begin{align*}
  V &= \left(\frac s2 - \lf\frac n{p^k}\rf\right)\lambda_\infty + \frac s2 \alpha - \beta,\\[.5em]
  \alpha &= \sum_{r=0}^{k-1}(p^{k-r} - p^{k-(r+1)})\lambda_r,\\[.5em]
  \beta &= \sum_{r=0}^{k-1}\left(\lf\frac n{p^r}\rf - \lf\frac n{p^{r+1}}\rf\right)\lambda_r.
\end{align*}

Our assumption $sp^k > 2n$ gives $\ds\frac{sp^{k-r}}2 > \frac n{p^r}\ge\lf\frac n{p^r}\rf$, which implies that in the irreducible decomposition
\[ V = k_0 \lambda_0 + \dotsb + k_{k-1}\lambda_{k-1} + k_\infty \lambda_\infty, \]
we have $k_i\ge0$ for $0\le i<\infty$ and $k_\infty > 0$. The desired result then follows from Lemma \ref{lem:mack-bound}.
\end{proof}

\begin{example}
When $p=3$, the regular slice tower up to $\regconn{18}$ is
\[\xymatrix{
  \regconn{18}=\Sigma^{6\lambda_0+2\lambda_1+\lambda_\infty}\gT \ar[r] \ar[d] & \Sigma^{6\lambda_0+2\lambda_1+\lambda_\infty}\tr_{C_{p^2}}\m W = \regslice{18} & \regconn8=\Sigma^{2\lambda_0+\lambda_1+\lambda_\infty}\gT \ar[r] \ar[d] & \Sigma^{3\lambda_0+\lambda_1}\tr_e\m W =\regslice8\\
  \regconn{16}=\Sigma^{5\lambda_0+2\lambda_1+\lambda_\infty}\gT \ar[r] \ar[d] & \Sigma^{6\lambda_0+2\lambda_1}\tr_e\m W =\regslice{16} & \regconn6=\Sigma^{2\lambda_0+\lambda_\infty}\gT \ar[d] \ar[r] & \Sigma^{2\lambda_0+\lambda_1}\tr_{C_p}\m W=\regslice6\\
  \regconn{14}=\Sigma^{4\lambda_0+2\lambda_1+\lambda_\infty}\gT \ar[r] \ar[d] & \Sigma^{5\lambda_0+2\lambda_1}\tr_e\m W =\regslice{14} & \regconn4=\Sigma^{\lambda_0+\lambda_\infty}\gT \ar[d] \ar[r] & \Sigma^{2\lambda_0}\tr_e\m W=\regslice4\\
  \regconn{12}=\Sigma^{4\lambda_0+\lambda_1+\lambda_\infty}\gT \ar[r] \ar[d] & \Sigma^{4\lambda_0+2\lambda_1}\tr_{C_p}\m W =\regslice{12} & \regconn2=\Sigma^{\lambda_\infty}\gT \ar[d] \ar[r] & \Sigma^{\lambda_0}\tr_e\m W =\regslice2\\
  \regconn{10}=\Sigma^{3\lambda_0+\lambda_1+\lambda_\infty}\gT \ar[r] & \Sigma^{4\lambda_0+\lambda_1}\tr_e\m W =\regslice{10} & \regconn0=\gT \ar[r] & \m W = \regslice0
}\]
\end{example}

\begin{remark}
The slice filtration is not a filtration by cyclotomic spectra: instead, we have
\[ \Phi^{C_{p^k}} \regconn{2n} \gT = \regconn{2\lc n/p^k\rc} \gT. \]
\end{remark}

\subsection{The slice filtration}
\label{sub:filtration-tf}

In this section we work out the filtration induced on homotopy groups. We will treat both $\pi_{[i]_\lambda}\TF$ and $\pi_{2i}\TF$; the latter is what one would be probably interested in a priori, but the former appears to be more natural. The filtration is defined as
\begin{align*}
\Fil^{2j}_\rmS \pi_{[i]_\lambda} \TF &= \im(\pi_{[i]_\lambda}^\T \regconn{2(i+j)} \THH \to \pi_{[i]_\lambda} \TF)\\
\Fil^{2j}_\rmS \pi_{2i} \TF &= \im(\pi_{2i}^\T \regconn{2(i+j)} \THH \to \pi_{2i} \TF)
\end{align*}
Since $\TF$ is even in this range, our formulas will apply just as well to $\mpi_{[i]_\lambda}\gT$ and $\mpi_{2i}\gT$.

\begin{theorem}
\label{thm:slice-filtration}
The slice filtration takes the following form on homotopy. When $j\le0$ or $i=0$, $\Fil^{2j}_\rmS\pi_{[i]_\lambda}\TF$ is all of $\pi_{[i]_\lambda}\TF$. Otherwise, $\Fil^{2j}_\rmS\pi_{[i]_\lambda}\TF$ is generated by
\[ \frac{[p(i+j-1)]_A!}{[p(i-1)]_A!}. \]
When $j\le0$ or $i=0$, $\Fil^{2j}_\rmS\pi_{2i}\TF$ is all of $\pi_{2i}\TF$. Otherwise, $\Fil^{2j}_\rmS\pi_{2i}\TF$ is generated by
\[ \frac{[p(i+j-1)]_A!}{[p^r]_A^{i-1} \phi^r\left(\left[\lf\tfrac{i+j-1}{p^{r-1}}\rf\right]_A!\right)}, \]
where $r=\lc\log_p\left(\frac{i+j}i\right)\rc$. In particular, taking $i=1$ in either case gives
\[ \Fil^{2j}_\rmS\pi_2 \TF = [pj]_A!\pi_2\TF. \]
\end{theorem}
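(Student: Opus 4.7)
The plan is to compute the slice filtration by iterating the slice tower from Theorem \ref{thm:slice-tower} into an explicit composite $\regconn{2(i+j)}\gT \to \gT$ and tracking its image on the relevant $\RO(\T)$-graded homotopy groups. The two cases behave differently: $\pi_{[i]_\lambda}$ aligns naturally with the slice covers $\regconn{2n}\gT = \Sigma^{[n]_\lambda}\gT$, while $\pi_{2i}$ requires conversion via the unit $u$-classes and thereby acquires the Frobenius twist visible in the second formula.

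I would first iterate the maps $\Sigma^{[n+1]_\lambda}\gT \xra{\sigma u_{\lambda^n}^{-1}} \Sigma^{[n]_\lambda}\gT$ for $n=i+j-1$ down to $n=i$, producing a canonical factorization $\regconn{2(i+j)}\gT \to \regconn{2i}\gT \to \gT$. The case $j=0$ serves as a normalization step: I would verify that the image of $\pi^\T_{[i]_\lambda}\regconn{2i}\gT$ is all of $\pi_{[i]_\lambda}\TF$, which amounts to checking that the intervening slices $\regslice{2k}\gT = \Sigma^{\{k\}_\lambda}\tr_{C_k}\m W$ for $k<i$ contribute no class in bidegree $[i]_\lambda$ that could obstruct surjectivity, following the vanishing arguments in the proof of Lemma \ref{lem:mack-trunc}.

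For the $\pi_{[i]_\lambda}$ formula, the image is then obtained by multiplying the $j=0$ generator by $\prod_{n=i}^{i+j-1}\sigma u_{\lambda^n}^{-1}$. Applying the Ansch\"utz--le Bras $q$-analogue of Legendre's formula developed earlier in the paper, each factor $\sigma u_{\lambda^n}^{-1}$ corresponds to a block of $p$ consecutive $q$-Pochhammer factors, so the full product collapses to the ratio $[p(i+j-1)]_A!/[p(i-1)]_A!$. For the $\pi_{2i}$ case, I would rewrite the generator by multiplying through the unit $u_{[i]_\lambda}\in\pi_{2i-[i]_\lambda}\S$; since this unit splits unevenly across the irreducible components $\lambda_s$, the resulting denominator acquires a Frobenius twist $\phi^r$ with $r=\lc\log_p((i+j)/i)\rc$, precisely the first level at which the decompositions of $[i]_\lambda$ and $[i+j]_\lambda$ become compatible modulo a single Frobenius twist.

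The main obstacle is the identification of the iterated product $\prod(\sigma u_{\lambda^n}^{-1})$ with the prescribed $q$-factorial ratio --- this is the heart of the connection between the Hill--Yarnall slice characterization and the $q$-Legendre principle emphasized in the paper's abstract. The subsequent bookkeeping for $\pi_{2i}$, tracking Frobenius twists on the Witt Mackey functor $\m W$, is then routine. Specializing both formulas at $i=1$ reduces them to $[pj]_A!$, providing a non-trivial consistency check.
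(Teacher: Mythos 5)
Your overall architecture (factor the composite of tower maps, normalize at $j=0$, then track the remaining factors via the $q$-Legendre formula) points in the right direction, but it skips the step that carries essentially all of the content. The image of the composite
\[ \pi^{\T}_{[i]_\lambda}\regconn{2(i+j)}\gT \cong \pi^{\T}_{[i]_\lambda-[i+j]_\lambda}\TF \longrightarrow \pi^{\T}_{[i]_\lambda}\TF \]
is not ``the $j=0$ generator times $\prod_{n}\sigma u_{\lambda^n}^{-1}$'': the image of a composite is the image of the second map restricted to the image of the first, and for $j>0$ the source $\pi_{[i]_\lambda-[i+j]_\lambda}\TF$ sits in a \emph{negative} virtual $RO(\T)$-degree. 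The paper's proof identifies this group via Lemma \ref{lem:rog-neg}: it is free on the Euler class $a_{\{i+j-1\}_\lambda}a_{\{i-1\}_\lambda}^{-1}$, not on any monomial in $\sigma$ and $u$-classes. The elements $\sigma u_{\lambda^n}^{-1}$ by themselves do not ``correspond to blocks of $p$ consecutive $q$-Pochhammer factors''; it is only after pairing against the Euler classes, via the key identity $\sigma^k a_{\{k\}_\lambda}u_{\{k\}_\lambda}^{-1}=[pk]_A!$ (a consequence of the $q$-gold relations and the $q$-Legendre principle), that $q$-integers appear at all. Without computing the source group and its generator, your argument cannot determine the image, and in particular cannot explain why the $j\le0$ regime gives everything while $j>0$ does not.

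The reduction of the $\pi_{2i}$ statement to the $\pi_{[i]_\lambda}$ statement by ``multiplying through by the unit $u_{[i]_\lambda}$'' also fails: $u_{[i]_\lambda}$ is not invertible in $\pi_\rog\TF$, so multiplication by it is not an isomorphism $\pi_{[i]_\lambda}\TF\to\pi_{2i}\TF$, and indeed the two answers in the theorem have genuinely different denominators. The paper instead computes $\pi_{2i-[i+j]_\lambda}\TF$ directly from Lemma \ref{lem:rog-neg}; the form of the generator depends on where $i-1$ falls among the dimensions $d_r(\{i+j-1\}_\lambda)=\lc\frac{i+j}{p^r}\rc-1$, which is what forces $r=\lc\log_p\left(\frac{i+j}{i}\right)\rc$ and, after simplification with the $q$-Legendre formula, produces the Frobenius-twisted factor $\phi^r\left(\left[\lf\tfrac{i+j-1}{p^{r-1}}\rf\right]_A!\right)$. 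Your heuristic for $r$ gestures at the right answer but does not substitute for this case analysis.
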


\begin{remark}
These formulas are easier to understand from our illustration of the $q$-Legendre formula (Figures \ref{fig:q-lgndr-2} and \ref{fig:q-lgndr-3}). The generator of $\Fil^{2j}_\rmS\pi_{[i]_\lambda}$ corresponds to taking the first $i+j-1$ columns, and then discarding the first $i-1$ columns. The generator of $\Fil^{2j}_\rmS\pi_{2i}$ corresponds to taking the first $i+j-1$ columns, and then discarding $i-1$ columns, \emph{tallest columns first}.
\end{remark}

\begin{proof}
The key identity, which follows from the $q$-gold relations and the $q$-Legendre principle, is
\[ \sigma^k a_{\{k\}_\lambda} u_{\{k\}_\lambda}^{-1} = [pk]_A!. \]

Starting with the case of $\pi_{[i]_\lambda}$, we want to identify the image of
\[
  \pi_{[i]_\lambda}^\T\regconn{2(i+j)}\gT
  = \pi_{[i]_\lambda-[i+j]_\lambda}\TF
  \xra{\sigma^{i+j} u_{[i+j]_\lambda}^{-1}}
  \pi_{[i]_\lambda}\TF.
\]
The terms are
\begin{align*}
  \pi_{[i]_\lambda}\TF &= A\<\sigma^i u_{[i]_\lambda}^{-1}\> \quad \text{by Tsalidis' theorem}\\
  \pi_{[i]_\lambda-[i+j]_\lambda}\TF &=
  \begin{cases}
    A\<\sigma^{-j} u_{[i]_\lambda}^{-1} u_{[i+j]_\lambda}\> & j\le 0 \quad \text{by Tsalidis' theorem}\\
    A\<a_{\{i+j-1\}_\lambda} a_{\{i-1\}_\lambda}^{-1}\> & j> 0 \quad \text{by Lemma \ref{lem:rog-neg}}
  \end{cases}
\end{align*}
Noting that $u_{[i]_\lambda}=u_{\{i-1\}_\lambda}$, this gives all the claims about $\pi_{[i]_\lambda}\TF$.

For the case of $\pi_{2i}$, we must identify the image of
\[
  \pi_{2i}^\T\regconn{2(i+j)}\gT
  = \pi_{2i-[i+j]_\lambda}\TF
  \xra{\sigma^{i+j} u_{[i+j]_\lambda}^{-1}}
  \pi_{2i}\TF = A\<\sigma^i\>
\]
Let $\alpha=\{i+j-1\}_\lambda$ with $p$-typical decomposition $\alpha=k_0\lambda_0 + \dotsb + k_{n-1}\lambda_{n-1}$. By Lemma \ref{lem:rog-neg},
\begin{align*}
  \pi_{2i-[i+j]_\lambda}\TF &= \pi_{2(i-1)-\alpha} \TF\\
  &=
  \begin{cases}
    A\<a_{\alpha[0,r-1)} a_{\lambda_{r-1}}^{d_{r-1}(\alpha)-i+1} u_{\lambda_{r-1}}^{i-1-d_r(\alpha)} u_{\alpha[r,n)}\> & d_r(\alpha) \le i-1 < d_{r-1}(\alpha)\\[.5em]
    A\<\sigma^{i-1-d_0(\alpha)} u_\alpha\> & d_0(\alpha) \le i-1
  \end{cases}
\end{align*}
We have $d_r(\alpha)=\lc\frac {i+j}{p^r}\rc-1$, so the dimension conditions become
\[
  (p^{r-1}-1)i < j \le (p^r-1)i \quad\text{or}\quad j\le0,
\]
equivalently $r=\lc\log_p\left(\frac{i+j}i\right)\rc$. This takes care of the case $j\le0$.

The case $j>0$ is more involved. We rewrite
\begin{align*}
    &\phantom{{}={}} \sigma^{i+j} u_\alpha^{-1} a_{\alpha[0,r-1)} a_{\lambda_{r-1}}^{d_{r-1}(\alpha)-i+1} u_{\lambda_{r-1}}^{i-1-d_r(\alpha)} u_{\alpha[r,n)}\\
  &= \frac{\sigma^{i+j-1} a_\alpha u_\alpha^{-1}}{\sigma^{i-1} a_{\lambda_{r-1}}^{i-1-d_r(\alpha)} u_{\lambda_{r-1}}^{-(i-1-d_r(\alpha))} a_{\alpha[r,n)}^{} u_{\alpha[r,n)}^{-1}} \sigma^i\\
  &= \frac{[p(i+j-1)]_A!}{[p^r]_A^{i-1} (a_{\lambda_{r-1}}^{-1} u_{\lambda_{r-1}})^{d_r(\alpha)} a_{\alpha[r,n)}^{} u_{\alpha[r,n)}^{-1}} \sigma^i
\end{align*}
To deal with the remaining term on the bottom, we write
\begin{align*}
  a_{\alpha[r,n)}^{} u_{\alpha[r,n)}^{-1} &= \prod_{s=r}^{n-1} (a_{\lambda_s}^{} u_{\lambda_s}^{-1})^{k_s}\\
  (a_{\lambda_{r-1}}^{-1} u_{\lambda_{r-1}}^{})^{d_r(\alpha)} a_{\alpha[r,n)}^{} u_{\alpha[r,n)}^{-1} &= \prod_{s=r}^{n-1} \phi^r([p^{s-r+1}]_A)^{k_s}\\
  &= \phi^r \prod_{\ell=1}^{n-r} [p^{\ell}]_A^{k_{\ell+r-1}}
\end{align*}
Since $k_s=\lf\frac{i+j-1}{p^s}\rf-\lf\frac{i+j-1}{p^{s+1}}\rf$, and $\lf\frac{\lf x/m\rf}{n}\rf=\lf\frac x{mn}\rf$, we can apply the $q$-Legendre formula (Lemma \ref{lem:q-lgdnr}) to conclude that this final expression is
\[ \phi^r\left(\left[\lf\tfrac{i+j-1}{p^{r-1}}\rf\right]_A!\right). \qedhere \]
\end{proof}

\subsection{The slice spectral sequence}
\label{sub:rsss}

Finally, we interpret the slice filtration using the regular slice spectral sequence (RSSS). The RSSS for a $G$-spectrum $X$ has signature
\[ E_2^{s,\alpha} = \mpi_{\alpha-s}\regslice{|\alpha|} X \conv \mpi_{\alpha-s} X \]
for $\alpha\in\RO(G)$. We draw this in the plane using Adams indexing, so $E^{s,\alpha+t}_2$ is placed at $(\alpha+t-s,s)$. The differentials go $d_r\colon E^{s,\alpha}_r \to E^{s+r,\alpha+(r-1)}_r$, or in terms of the plane display, translate by $(-1,r)$. We will again treat the two cases $\mpi_*\THH$ and $\mpi_{[*]_\lambda}\THH$. We suggest that the reader study the charts at the end before digesting the proofs in this section.


It will be useful to recall that the Mackey functors $\tr_{C_{p^r}}\m W$ and $\Phi^{C_{p^r}}\m W$ are given explicitly by
\[
  (\tr_{C_{p^r}} \m W)(\T/C_{p^k}) =
  \begin{cases}
    A/[p^{k+1}]_A & 0\le k\le r\\
    A/[p^{r+1}]_A & r<k
  \end{cases}
  \qquad
  (\Phi^{C_{p^r}} \m W)(\T/C_{p^k}) =
  \begin{cases}
    0 & 0\le k\le r\\
    A/\phi^{r+1}([p^{k-r}]_A) & r<k
  \end{cases}
\]

\begin{theorem}
\label{thm:htpy-slice}
The homotopy Mackey functors of the slices are given in even degrees by
\[
  \mpi_{2i}\regslice{2n} \gT =
  \begin{cases}
    \m W & 0=i=n\\
    \m R & 0<i=n\\
    \Phi^{C_{p^m}}\m W / [p^{h+1}]_A & 0< i < n
  \end{cases}
\]
where $\m R$ is the constant Mackey functor on $R$, and
\[
  m = \lc\log_p(n/i)\rc-1,
  \quad
  h=\begin{cases}
    \min\{v_p(n),\lf\log_p(n/i)\rf\} & n/i\text{ not a power of }p\\
    \lf\log_p(n/i)\rf & n/i\text{ a power of }p.
  \end{cases}
\]

If $R$ is $p$-torsionfree, then these are the only non-vanishing homotopy Mackey functors. If $R$ is a perfect $\F_p$-algebra, then
\[
  \mpi_{2i+1}\regslice{2n}\gT =
  \begin{cases}
    \tr_{C_{p^{m+h+1}}} \Phi^{C_{p^m}} \m W & n/i\text{ not a power of }p\\
    \tr_{C_{p^{m+h+1}}} \Phi^{C_{p^{m+1}}} \m W & n/i\text{ a power of }p.
  \end{cases}
\]
\end{theorem}
\begin{proof}
We use the exact sequence
\[ 0 \to \mpi_{2i+1}\regslice{2n}\gT \to \mpi_{2i} \regconn{2n+2} \gT \xra{\sigma u_{\lambda^n}^{-1}} \mpi_{2i} \regconn{2n} \gT \to \mpi_{2i}\regslice{2n}\gT \to 0. \]
By Lemma \ref{lem:rog-neg} and Corollary \ref{cor:irred-decomp}, we have $\mpi_{2i}\regconn{2n}\gT = \m W$ if $i=n$, and $\mpi_{2i}\regconn{2n}\gT = \Phi^{C_{p^m}}\m W$ if $0<i<n$. Let us write $g_1$ for the generator of $\mpi_{2i}\regconn{2n}\gT$ and $g_2$ for the generator of $\mpi_{2i}\regconn{2n+2}\gT$. The degrees are related by \[|g_2|=|g_1|-\lambda^n = |g_1|-\lambda_{v_p(n)}.\] Note that $m$ is the highest value of $j$ such that $a_{\lambda_j}$ appears in $g_1$.

If we write $e^k(j)$ for the exponent of $a_{\lambda_j}$ in $g_k$, then there is a unique $h$ such that $e^2(h)=e^1(h)+1$, $e^2(j)=e^1(j)$ for $j\ne h$. The map $\mpi_{2i}\regconn{2n+2} \gT \to \mpi_{2i} \regconn{2n} \gT$ will then hit $[p^{h+1}]_A$ times $g_1$. If $i=n$, then $h=0$ and we get $\m W/[p]_A=\m R$. Otherwise, there are three possibilities:
\begin{itemize}
\item If $v_p(n)\le m$, then $h=v_p(n)$.

\item If $v_p(n)> m$ and $n/i$ is not a power of $p$, then $h=m$.

\item If $v_p(n)>m$ and $n/i$ is a power of $p$, then $h=m+1$.
\end{itemize}
Some examples of the three cases with $p=3$ are
\[\xymatrix@R=1em{
  \Phi^{C_p}\m W\<a_{\lambda_0}^3 a_{\lambda_1}\> \ar[r]^-{[p]_A} & \Phi^{C_p}\m W\<a_{\lambda_0}^2 a_{\lambda_1}\> \ar[r] & \mpi_{2}\regslice{8}\gT\\
  \Phi^{e} \m W\<a_{\lambda_0}^2 u_{\lambda_1}\> \ar[r]^-{[p]_A} & \Phi^{e}\m W\<a_{\lambda_0} u_{\lambda_0}\> \ar[r] & \mpi_4\regslice6\gT\\
  \Phi^{C_p}\m W\<a_{\lambda_0}^4 a_{\lambda_1} u_{\lambda_1}\> \ar[r]^-{[p^2]_A} & \Phi^{e}\m W\<a_{\lambda_0}^4 u_{\lambda_1} \> \ar[r] & \mpi_4\regslice{12}\gT\\
}\]
We then combine these cases into the stated formula for $h$ by noting that $\lf\log_p(n/i)\rf=\lc\log_p(n/i)\rc$ if $n/i$ is a power of $p$, and $m=\lf\log_p(n/i)\rf$ otherwise.

When $R$ is $p$-torsionfree, the extended prism condition (Proposition \ref{prop:prism-extended}) shows that the above maps are injective, so the odd homotopy Mackey functors vanish. When $R$ is a perfect $\F_p$-algebra, there are two cases.


When $n/i$ is not a power of $p$, we get
\begin{align*}
  (\mpi_{2i+1}\regslice{2n}\THH)(\T/C_{p^k}) &=
  \begin{cases}
    0 & 0 \le k \le m\\
    A/p^{k-m} & 0 < k - m < h+1\\
    p^{k-m-(h+1)}A/p^{k-m}A & h + 1 \le k - m
  \end{cases}\\
  \intertext{and thus}
  \mpi_{2i+1}\regslice{2n}\THH &= \tr_{C_{p^{m+h+1}}} \Phi^{C_{p^m}} \m W.
\end{align*}
When $n/i$ is a power of $p$, we instead get
\begin{align*}
  (\mpi_{2i+1}\regslice{2n}\THH)(\T/C_{p^k}) &=
  \begin{cases}
    0 & 0 \le k \le m+1\\
    A/p^{k-(m+1)} & 0 < k - (m+1) < h\\
    p^{k-(m+1) - h}A/p^{k-(m+1)}A & h\le k - (m+1)
  \end{cases}\\
  \intertext{and thus}
  \mpi_{2i+1}\regslice{2n}\THH &= \tr_{C_{p^{m+h+1}}} \Phi^{C_{p^{m+1}}} \m W. \qedhere
\end{align*}
\end{proof}

The RSSS thus collapses at $E_2$ when $R$ is $p$-torsionfree. When $R$ is a perfect $\F_p$-algebra, the RSSS is very complicated; however, since we identified the entire slice tower (i.e.\ the maps between the slice covers, not just the slices), the $E_\infty$ page can be read off from Theorem \ref{thm:slice-filtration}.

\begin{corollary}
Let $R$ be a perfect $\F_p$-algebra, and define $h=h(n,i)$ as in Theorem \ref{thm:htpy-slice}. The entry on the $E_\infty$ page of the RSSS corresponding to $\mpi_{2i}\regslice{2n}\THH(R)$ is $\Phi^{C_{p^{f+1}}} \m W/p^{h(n,i)+1}$, where $f=\sum_{i\le m<n} (h(m,i)+1)$.
\end{corollary}

\begin{proposition}
The homotopy Mackey functors $\mpi_{[i]_\lambda}$ of the slices are
\[
  \mpi_{[i]_\lambda}\regslice{2n} \gT =
  \begin{cases}
    \m W & 0=i=n\\
    \m W/[pn]_A & 0<i=n\\
    \Phi^{C_{p^{\ell(i,n)}}}\m W / [pn]_A & 0 < i < n
  \end{cases}
\]
where $\ell(i,n)=\max\{v_p(i),\dotsc,v_p(n-1)\}=\min\{r \mid \lc n/p^r \rc = \lc i/p^r \rc\}$.
\end{proposition}
\begin{proof}
We use the exact sequence
\[ \mpi_{[i]_\lambda} \regconn{2n+2} \gT \xra{\sigma u_{\lambda^n}^{-1}} \mpi_{[i]_\lambda} \regconn{2n} \gT \to \mpi_{[i]_\lambda}\regslice{2n}\gT \to 0. \]
By Lemma \ref{lem:rog-neg}, the terms are
\begin{align*}
  \mpi_{[i]_\lambda-[n+1]_\lambda}\gT &=
  \begin{cases}
    0 & 0=i<n\\
    \Phi^{C_n}\m W\<a_{\lambda^n}\> & 0<i=n\\
    \Phi^{C_{p^{\ell(i,n+1)}}}\m W\<a_{\{n\}_\lambda} a_{\{i-1\}_\lambda}^{-1}\> & 0<i<n
  \end{cases}\\
  \mpi_{[i]_\lambda-[n]_\lambda}\gT &=
  \begin{cases}
    \m W & 0\le i=n\\
    \Phi^{C_{p^{\ell(i,n)}}}\m W\<a_{\{n-1\}_\lambda} a_{\{i-1\}_\lambda}^{-1}\> & 0<i<n
  \end{cases}
\end{align*}
so the result follows by the $q$-gold relation.
\end{proof}

We provide charts of the RSSS below. It is customary to use hieroglyphics to denote Mackey functors in spectral sequences. Our notations are listed in Figure \ref{fig:mackey-hiero}; Lewis diagrams for these can be found in Example \ref{ex:mackey}. Colors correspond to vanishing lines: for example, after restricting to $\{e\}$ we would only see the classes in red, after restricting to $C_p$ we would only see the red and orange classes, etc. In particular, we see that after restricting to any finite subgroup, the spectral sequence is bounded in each degree.

The $E_2$ page of the $\Z$-graded RSSS, in both the $p$-torsionfree and torsion cases, is depicted in Figures \ref{fig:slice-2-perfd}--\ref{fig:slice-3-tors}. We indicate the $E_\infty$ page of the $\Z$-graded RSSS for a perfect $\F_p$-algebra in Figure \ref{fig:slice-tors-oo}. Here, an entry $(f\quad h)$ means that the entry in plane coordinate $(2i,s)$ is $\Phi^{C_{p^{f+1}}} \m W/p^h$. Finally, the filtration on $\mpi_{[i]_\lambda}\THH$ (when $R$ is $p$-torsionfree) is depicted in Figures \ref{fig:slice-2-good} and \ref{fig:slice-3-good}; here the group $\mpi_{[i]_\lambda}$ appears in the $2i$ column, so this is not really a spectral sequence chart.

\begin{figure}
  \label{fig:mackey-hiero}
  \definecolor{orange}{HTML}{FF8000}
  \definecolor{yellow}{HTML}{DDCC00}
  \definecolor{green}{HTML}{009933}
  \definecolor{blue}{HTML}{0066FF}
  \definecolor{indigo}{HTML}{9933CC}
  \definecolor{violet}{HTML}{FD56BD}
  \begin{tabular}{|*9{>{$\vcenter\bgroup\hbox\bgroup$}c<{$\egroup\egroup$}|}}
    \hline
    & & /[p]_A & /[p^2]_A & /[p^3]_A & /[p^4]_A &/[p^5]_A & /[p^6]_A & /[p^7]_A\\\hline
    \m W & \color{red} \MWitt & \color{red} \Ma & \color{red} \Mb & \color{red} \Mc & \color{red} \Md & \color{red} \Me & \color{red} \Mf & \color{red} \Mg\\\hline
    \Phi^e\m W && \color{orange} \Ma & \color{orange} \Mb & \color{orange} \Mc & \color{orange} \Md & \color{orange} \Me & \color{orange} \Mf & \color{orange} \Mg\\\hline
    \Phi^{C_p}\m W && \color{yellow} \Ma & \color{yellow} \Mb & \color{yellow} \Mc & \color{yellow} \Md & \color{yellow} \Me & \color{yellow} \Mf & \color{yellow} \Mg\\\hline
    \Phi^{C_{p^2}}\m W && \color{green} \Ma & \color{green} \Mb & \color{green} \Mc & \color{green} \Md & \color{green} \Me & \color{green} \Mf & \color{green} \Mg\\\hline
    \Phi^{C_{p^3}}\m W && \color{blue} \Ma & \color{blue} \Mb & \color{blue} \Mc & \color{blue} \Md & \color{blue} \Me & \color{blue} \Mf & \color{blue} \Mg\\\hline
    \Phi^{C_{p^4}}\m W && \color{indigo} \Ma & \color{indigo} \Mb & \color{indigo} \Mc & \color{indigo} \Md & \color{indigo} \Me & \color{indigo} \Mf & \color{indigo} \Mg\\\hline
    \Phi^{C_{p^5}}\m W && \color{violet} \Ma & \color{violet} \Mb & \color{violet} \Mc & \color{violet} \Md & \color{violet} \Me & \color{violet} \Mf & \color{violet} \Mg\\\hline
  \end{tabular}
  \vspace{1em}

  \begin{tabular}{|*9{>{$\vcenter\bgroup\hbox\bgroup$}c<{$\egroup\egroup$}|}}
    \hline
    & \tr_{C_{p^{*+1}}} & \tr_{C_{p^{*+2}}} & \tr_{C_{p^{*+3}}} & \tr_{C_{p^{*+4}}} & \tr_{C_{p^{*+5}}}\\\hline
    \Phi^e\m W & \color{orange} \Mta & \color{orange} \Mtb & \color{orange} \Mtc & \color{orange} \Mtd & \color{orange} \Mte\\\hline
    \Phi^{C_p}\m W & \color{yellow} \Mta & \color{yellow} \Mtb & \color{yellow} \Mtc & \color{yellow} \Mtd & \color{yellow} \Mte\\\hline
    \Phi^{C_{p^2}}\m W & \color{green} \Mta & \color{green} \Mtb & \color{green} \Mtc & \color{green} \Mtd & \color{green} \Mte\\\hline
    \Phi^{C_{p^3}}\m W & \color{blue} \Mta & \color{blue} \Mtb & \color{blue} \Mtc & \color{blue} \Mtd & \color{blue} \Mte\\\hline
    \Phi^{C_{p^4}}\m W & \color{indigo} \Mta & \color{indigo} \Mtb & \color{indigo} \Mtc & \color{indigo} \Mtd & \color{indigo} \Mte\\\hline
    \Phi^{C_{p^5}}\m W & \color{violet} \Mta & \color{violet} \Mtb & \color{violet} \Mtc & \color{violet} \Mtd & \color{violet} \Mte\\\hline
  \end{tabular}

  \caption{Hieroglyphics for Mackey functors}
\end{figure}

\begin{figure}
  \begin{center}\includegraphics[width=0.6\textwidth]{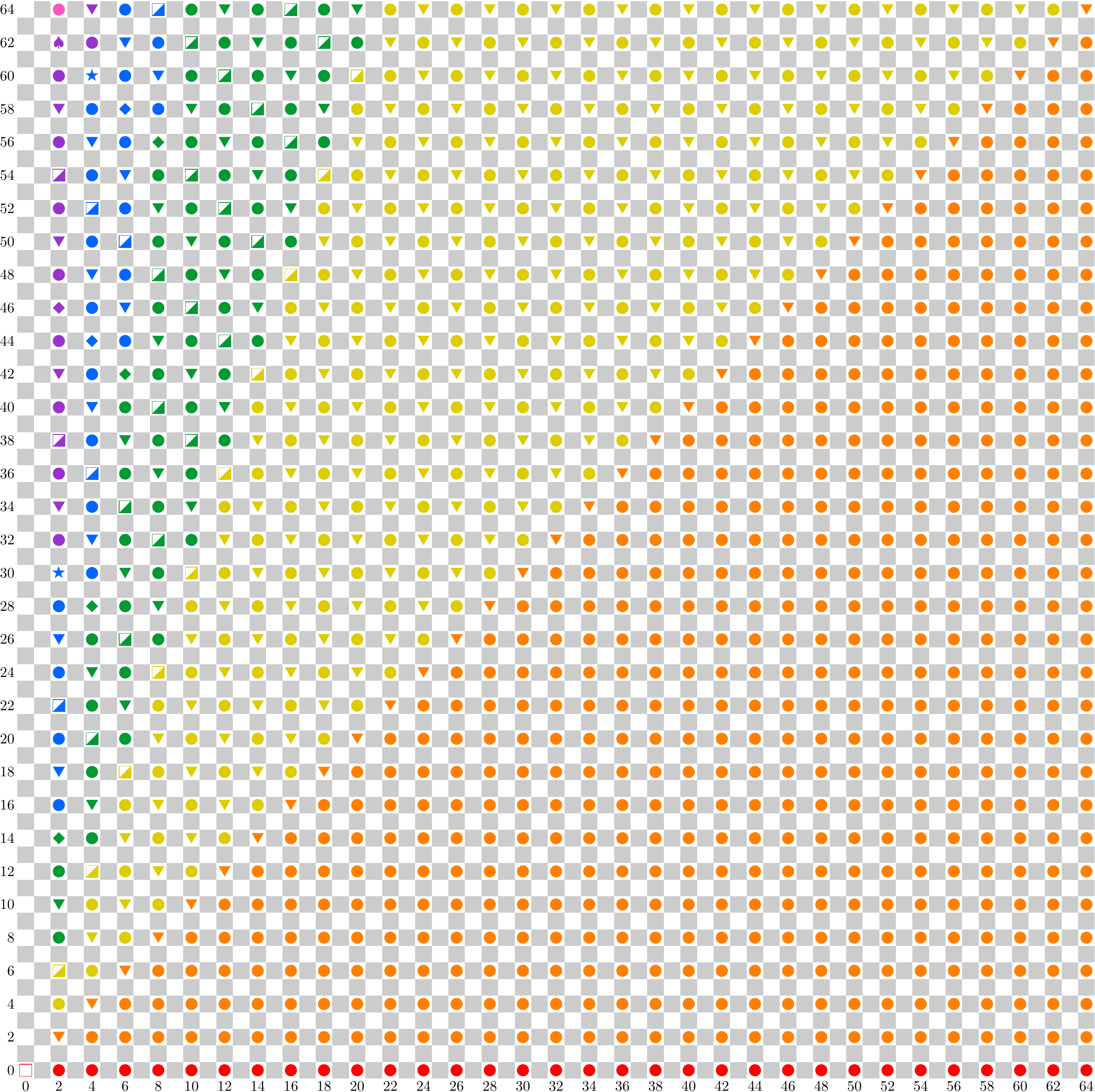}\end{center}
  \caption{$E_2$ page of the RSSS for $\THH(\Z_2^{\mathrm{cycl}}; \Z_2)$}
  \label{fig:slice-2-perfd}
\end{figure}

\begin{figure}
  \begin{center}\includegraphics[width=0.6\textwidth]{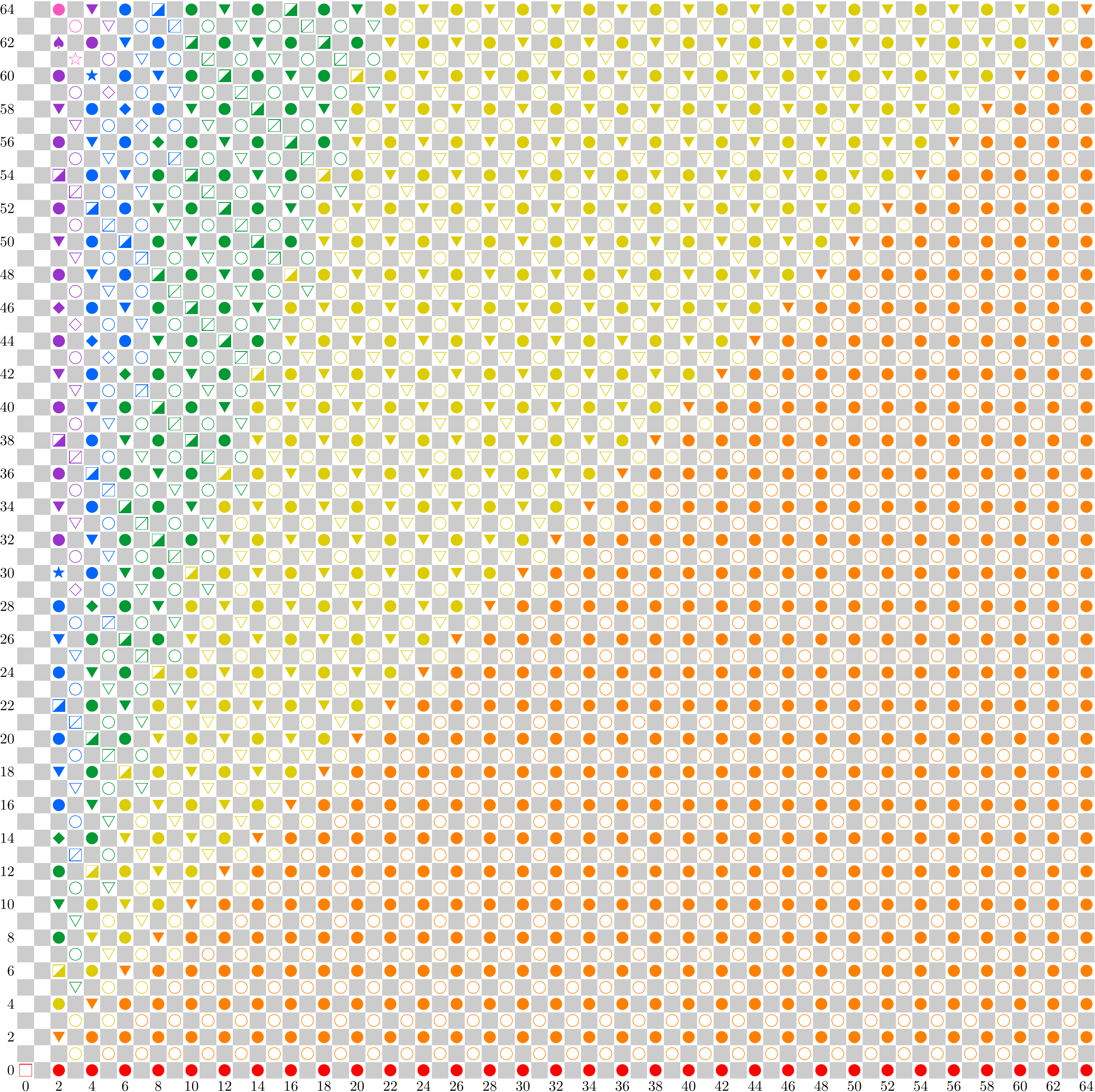}\end{center}
  \caption{$E_2$ page of the RSSS for $\THH(\F_2)$}
  \label{fig:slice-2-tors}
\end{figure}

\begin{figure}
  \begin{center}\includegraphics[width=0.6\textwidth]{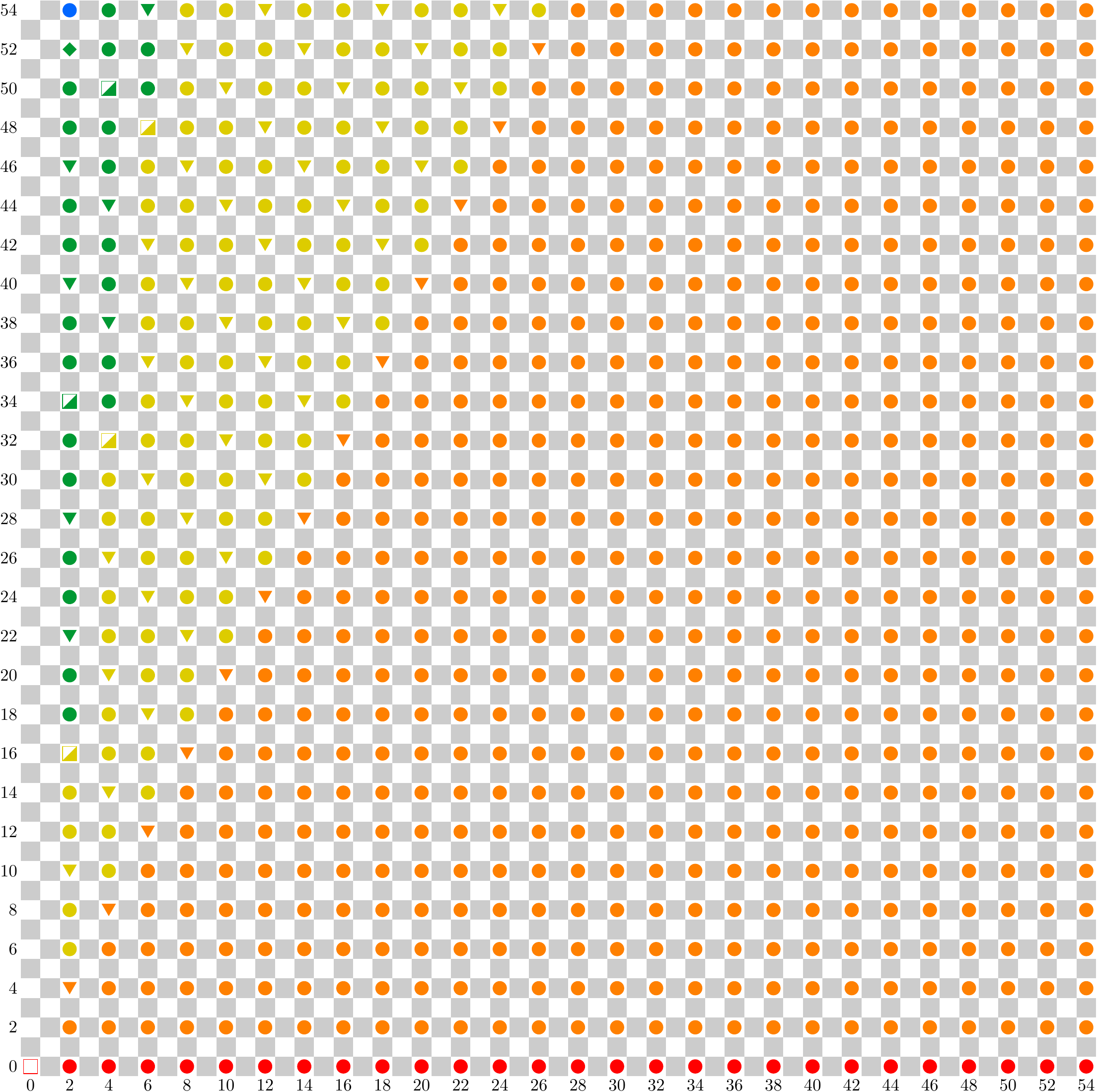}\end{center}
  \caption{$E_2$ page of the RSSS for $\THH(\Z_3^{\mathrm{cycl}}; \Z_3)$}
  \label{fig:slice-3-perfd}
\end{figure}

\begin{figure}
  \begin{center}\includegraphics[width=0.6\textwidth]{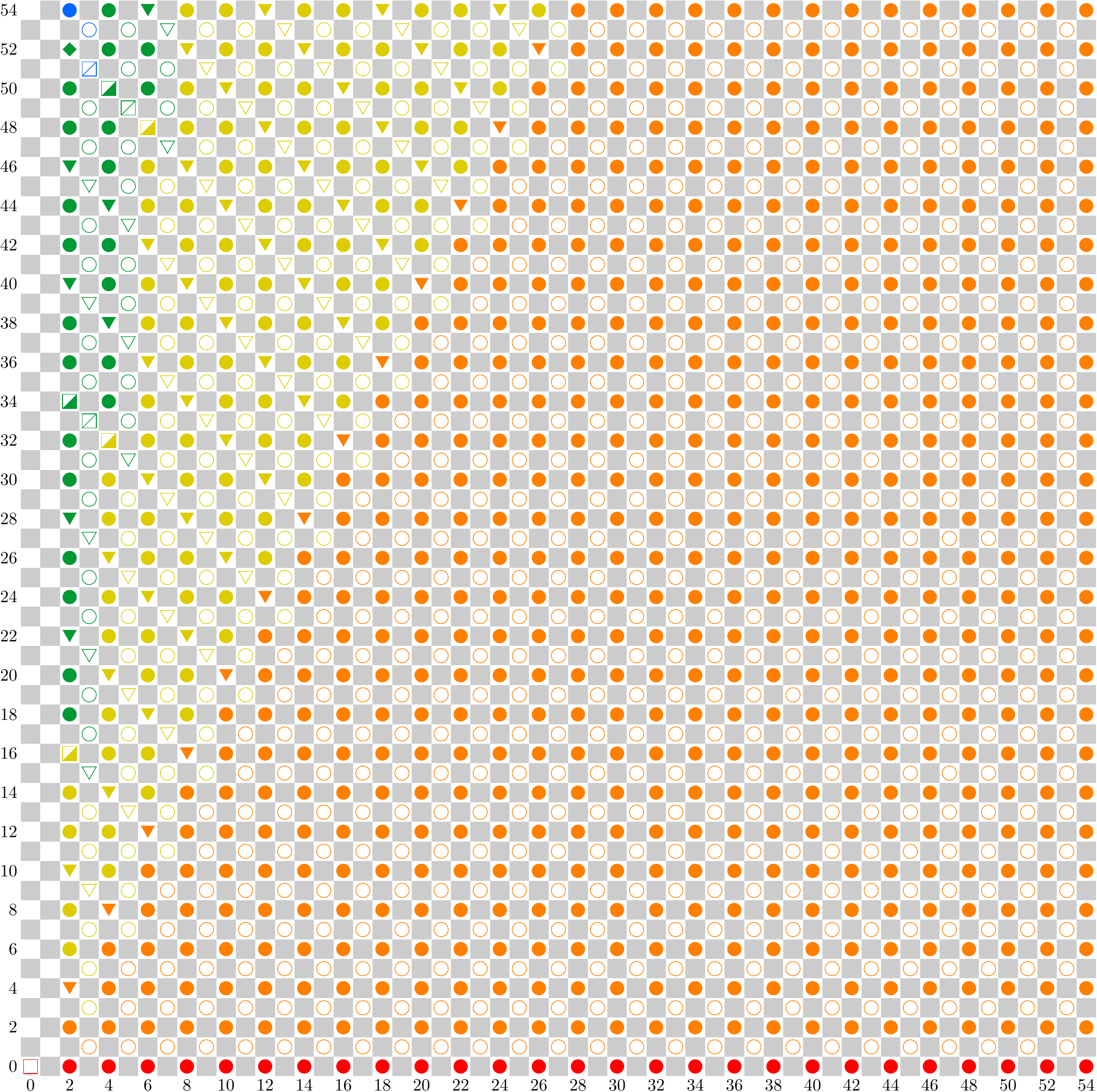}\end{center}
  \caption{$E_2$ page of the RSSS for $\THH(\F_3)$}
  \label{fig:slice-3-tors}
\end{figure}

\begin{figure}
  \begin{center}
    \includegraphics[width=0.4\textwidth]{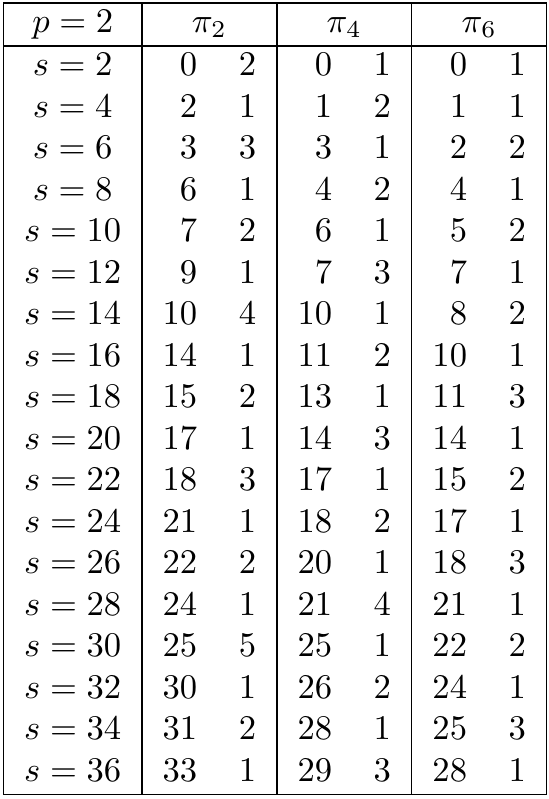}
    \includegraphics[width=0.4\textwidth]{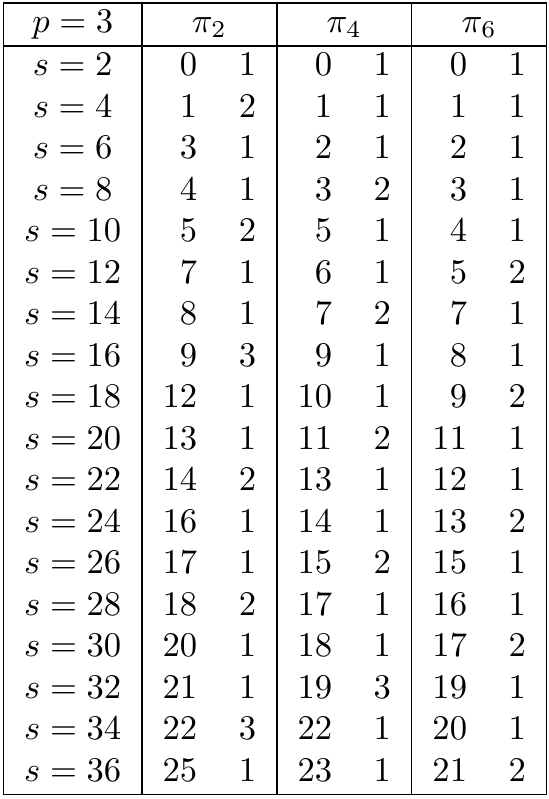}
  \end{center}
  \caption{$E_\infty$ page of the RSSS for $\THH(\F_2)$ and $\THH(\F_3)$.}
  \label{fig:slice-tors-oo}
\end{figure}

\begin{figure}
  \begin{center}\includegraphics[width=0.6\textwidth]{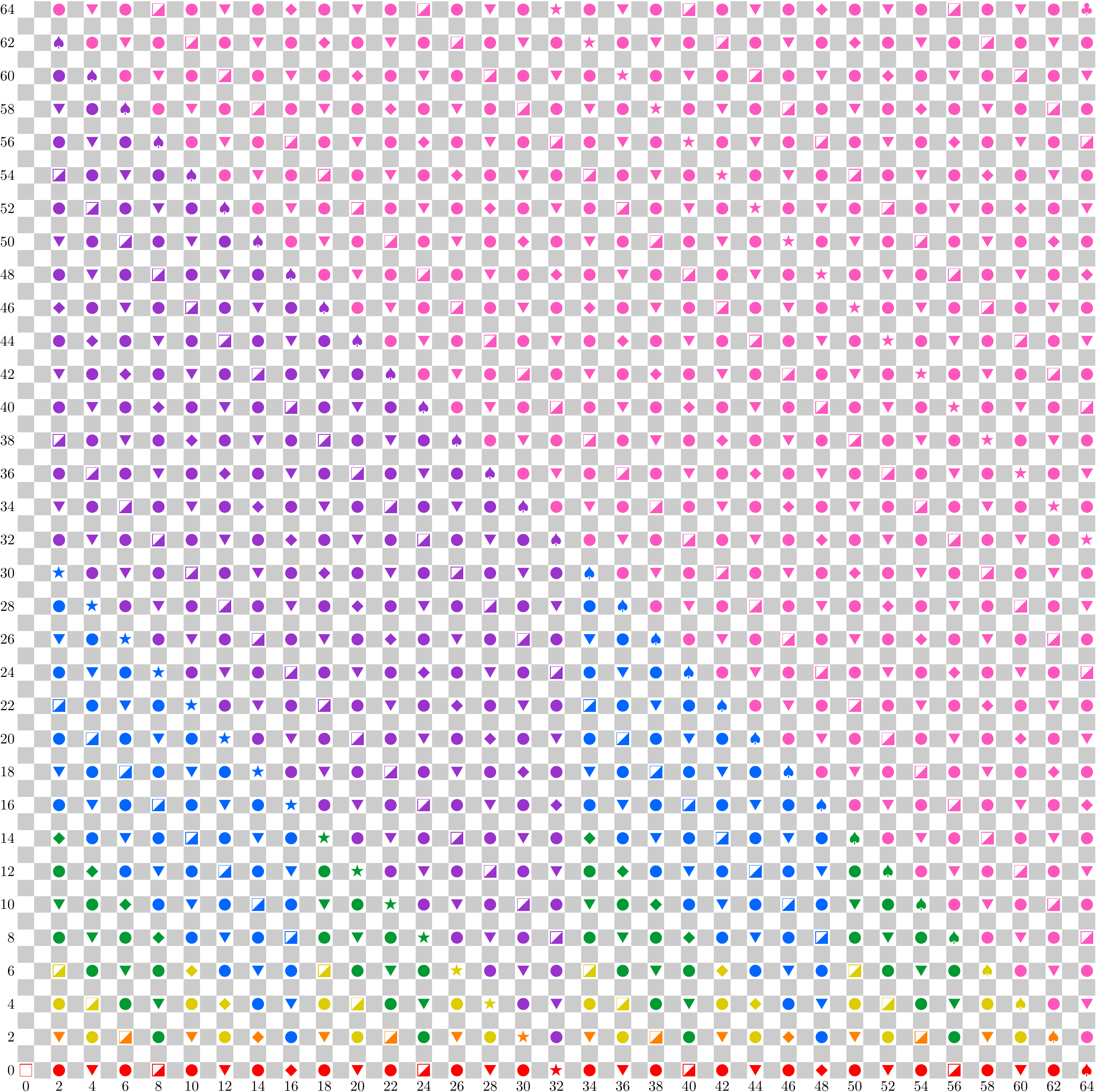}\end{center}
  \caption{Filtration on $\mpi_{[*]_\lambda}\THH(\Z_2^{\mathrm{cycl}}; \Z_2)$}
  \label{fig:slice-2-good}
\end{figure}

\begin{figure}
  \begin{center}\includegraphics[width=0.6\textwidth]{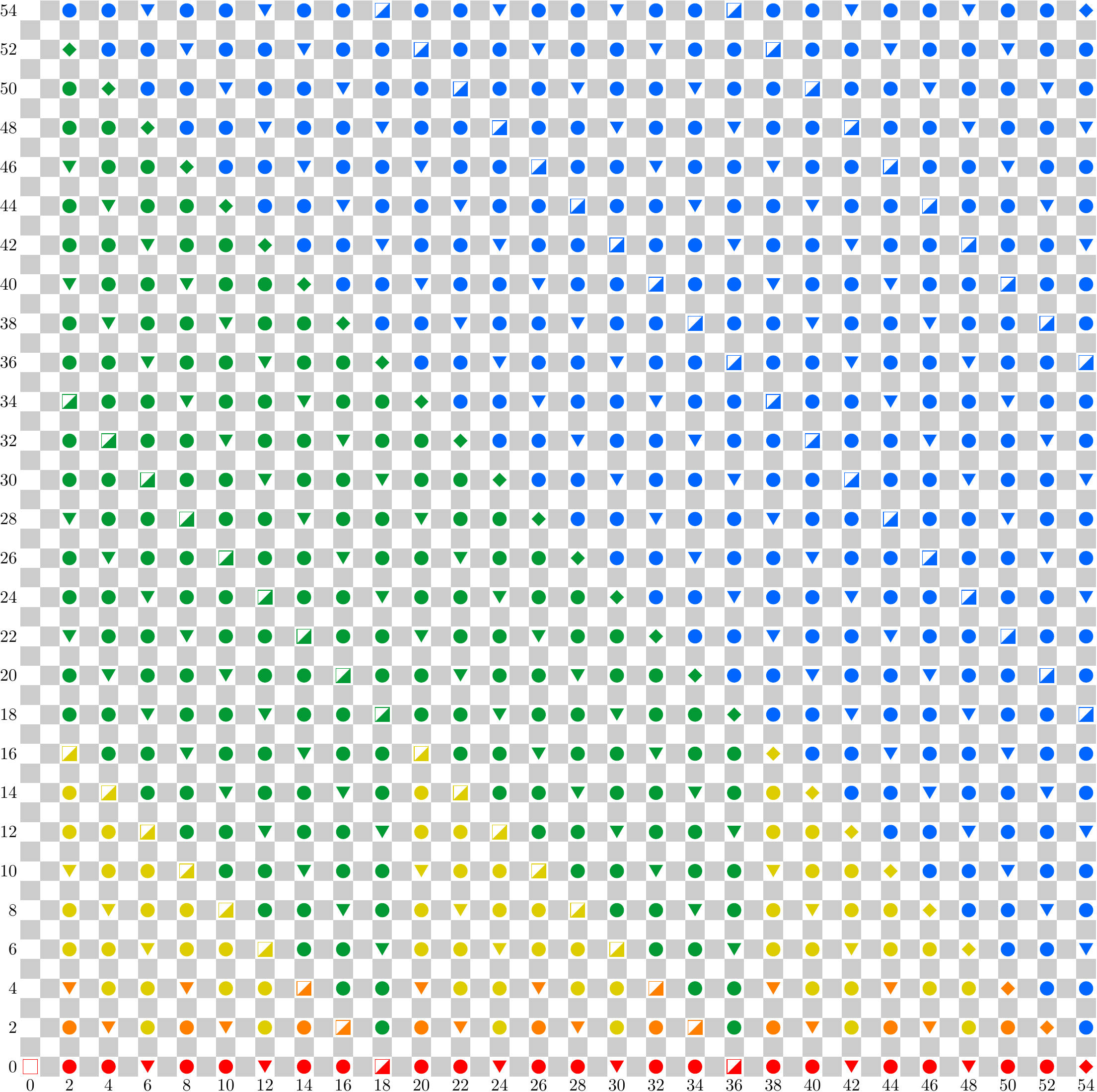}\end{center}
  \caption{Filtration on $\mpi_{[*]_\lambda}\THH(\Z_3^{\mathrm{cycl}}; \Z_3)$}
  \label{fig:slice-3-good}
\end{figure}

\section{Further questions}
\label{sec:future}

We close with some questions to stimulate ideas.

\begin{question}
Let $G$ be a finite cyclic group, and let $X$ be a $G$-spectrum whose underlying is 2-polynomial. By this we mean that $\pi_*^e(X) = \pi_0^e(X)[x]$ for a class $x\in\pi_2^e(X)$, so that $x^n$ exhibits $\Sigma^{2n}X$ as the $2n^\th$ Postnikov cover of $X^e$. Under what conditions is $X$ ``slice 2-polynomial'', in the sense that the odd slices vanish and
\begin{align*}
  \regconn{2n}X &= \Sigma^{[n]_\lambda}X?\\
  \intertext{For example, Ullman \cite[Theorem 5.1]{Ullman} shows that}
  \regconn{2n}\KU_G &= \Sigma^{[n]_\lambda}\ku_G,
\end{align*}
so $\ku_G$ is slice 2-polynomial (and $\KU_G$ is ``slice 2-periodic''). What should we expect for higher periodicities?
\end{question}

\begin{question}
If $C_n$ acts on $R$, then the fixed point Mackey functor $\m R$ is a $C_n$-Tambara functor. When $n$ is prime to $p$, Angeltveit-Blumberg-Gerhardt-Hill-Lawson-Mandell show that the ``relative $\THH$'' $N^\T_{C_n}\m R$ is a $p$-cyclotomic spectrum \cite[Theorem 8.6]{ABGHLM}. For example, $R$ could be a perfectoid $\Zpcycl$-algebra, and $C_n$ a finite quotient of $\Gal(\Qpcycl/\Q_p)=\Z_p^\times$. What does the slice filtration on $(N^\T_{C_n}\m R)^\wedge_p$ look like in this case?
\end{question}

\begin{question}
Is there a formal proof of Theorem \ref{thm:slice-tower}, relating the slice filtration of any cyclotomic spectrum to its Postnikov filtration?
\end{question}

\begin{question}
The $q$-gold relation provides a dictionary between representations and $q$-analogues; for example, $\{n\}_\lambda$ corresponds to $[pn]_q!$. (Bhargava's perspective \cite{Bhargava!} makes the additional factor of $p$ less distressing.) What $q$-analogues do other families of representations (symmetric powers, wedge powers, cannibalistic classes, \dots) correspond to? What representations correspond to $q$-multinomial coefficients, $q$-Catalan numbers, \dots? Are these useful for $\THH$?
\end{question}

\begin{question}
We have used the regular slice filtration due to its superior multiplicative properties, but there are other versions of the slice filtration. For example, a back-of-the-envelope calculation suggests that the classical slice filtration is given by
\[ \clsconn{2n}\gT = \Sigma^{\{2n\}_\lambda - \{n\}_\lambda}\gT=\Sigma^{[2n+1]_\lambda - [n+1]_\lambda}\gT. \]
Wilson describes a general framework for slice filtrations in \cite[\sec1.3]{Wilson}. Our proof of Theorem \ref{thm:slice-tower} was relatively formal; can it be generalized to identify the slice filtration on $\THH$ for an \emph{arbitrary} dimension function $\nu$? If so, what is the arithmetic interpretation of $\nu$?
\end{question}

\begin{question}
In the cyclotomic $t$-structure \cite{TCart}, the ``cyclotomic homotopy groups'' $\pi_i^{\mathrm{cyc}}$ of $\THH$ are essentially given by the ordinary homotopy groups $\pi_i$ of $\TR$. Slices are generalizations of homotopy groups, and $\TR$ is again a cyclotomic spectrum. Do the slices of $\TR$, or the $RO(\T)$-graded homotopy groups of $\TR$, correspond to something in the cyclotomic $t$-structure?
\end{question}

\begin{question}
Wilson \cite{Wilson} has given algebraic descriptions of the category of $n$-slices, as well as an algorithm for computing slices. What does this look like when $G=\T$, and how does it compare to our method?
\end{question}

\bibliographystyle{amsalpha}
\bibliography{./bibliography}

\newcommand{\etalchar}[1]{$^{#1}$}
\providecommand{\bysame}{\leavevmode\hbox to3em{\hrulefill}\thinspace}
\providecommand{\MR}{\relax\ifhmode\unskip\space\fi MR }
\providecommand{\MRhref}[2]{%
  \href{http://www.ams.org/mathscinet-getitem?mr=#1}{#2}
}
\providecommand{\href}[2]{#2}
\begin{thebibliography}{AMGR17}

\bibitem[AB19]{AClBTrace}
Johannes Ansch\"utz and Artur C\'esar-Le Bras, \emph{The $p$-completed
  cyclotomic trace in degree 2}, \url{https://arxiv.org/abs/1907.10530v1}.

\bibitem[ABG{\etalchar{+}}18]{ABGHLM}
Vigleik Angeltveit, Andrew~J. Blumberg, Teena Gerhardt, Michael~A. Hill, Tyler
  Lawson, and Michael~A. Mandell, \emph{Topological cyclic homology via the
  norm}, Documenta mathematica \textbf{23} (2018).

\bibitem[Ada84]{Adams}
J.~F. Adams, \emph{{Prerequisites (on equivariant stable homotopy) for
  Carlsson's lecture}}, Algebraic Topology Aarhus 1982 (Berlin, Heidelberg)
  (Ib~H. Madsen and Robert~A. Oliver, eds.), Springer Berlin Heidelberg, 1984,
  pp.~483--532.

\bibitem[AG11]{ROS1TR}
Vigleik Angeltveit and Teena Gerhardt, \emph{{$RO(S^1)$}-graded {TR}-groups of
  {$\mathbb F_p$}, {$\mathbb Z$} and $\ell$}, Journal of Pure and Applied
  Algebra \textbf{215} (2011), no.~6, 1405--1419.

\bibitem[AMGR17]{AMGRgenuine}
David Ayala, Aaron Mazel-Gee, and Nick Rozenblyum, \emph{{A naive approach to
  genuine $G$-spectra and cyclotomic spectra}},
  \url{https://arxiv.org/abs/1710.06416}.

\bibitem[AN18]{TCart}
Benjamin Antieau and Thomas Nikolaus, \emph{{Cartier modules and cyclotomic
  spectra}}, \url{https://arxiv.org/abs/1809.01714}.

\bibitem[Ang15]{AngeltveitNorm}
Vigleik Angeltveit, \emph{{The norm map of Witt vectors}}, Comptes Rendus
  Mathematique \textbf{353} (2015), no.~5, 381--386.

\bibitem[B\"85]{Bokstedt}
M.\ B\"okstedt, \emph{{Topological Hochschild homology of $\mathbb Z$ and
  $\mathbb Z/p$}}.

\bibitem[Bar17]{BarwickMackeyI}
Clark Barwick, \emph{{Spectral Mackey functors and equivariant algebraic
  K-theory (I)}}, Advances in Mathematics \textbf{304} (2017), 646 -- 727.

\bibitem[BD17]{BlumbergNotes}
Andrew Blumberg and Arun Debray, \emph{{The Burnside category: Notes for a
  class on equivariant stable homotopy theory}},
  \url{https://web.ma.utexas.edu/users/a.debray/lecture_notes/m392c_EHT_notes.pdf}.

\bibitem[BG16]{BGcyclonic}
Clark Barwick and Saul Glasman, \emph{{Cyclonic spectra, cyclotomic spectra,
  and a conjecture of Kaledin}}, \url{https://arxiv.org/abs/1602.02163}.

\bibitem[Bha00]{Bhargava!}
Manjul Bhargava, \emph{{The Factorial Function and Generalizations}}, The
  American Mathematical Monthly \textbf{107} (2000), no.~9, 783--799.

\bibitem[Bha18]{PrismNotes}
Bhargav Bhatt, \emph{Prismatic cohomology},
  \url{http://www-personal.umich.edu/~bhattb/teaching/prismatic-columbia/}.

\bibitem[BM15]{BMCyclotomic}
Andrew~J. Blumberg and Michael~A. Mandell, \emph{The homotopy theory of
  cyclotomic spectra}, Geom. Topol. \textbf{19} (2015), no.~6, 3105--3147.

\bibitem[BMS18]{BMS1}
Bhargav Bhatt, Matthew Morrow, and Peter Scholze, \emph{{Integral $p$-adic
  Hodge theory}}, Publications math{\'e}matiques de l'IH{\'E}S \textbf{128}
  (2018), no.~1, 219--397.

\bibitem[BMS19]{BMS2}
\bysame, \emph{Topological {H}ochschild homology and integral $p$-adic {H}odge
  theory}, Publications math{\'e}matiques de l'IH{\'E}S \textbf{129} (2019),
  no.~1, 199--310.

\bibitem[Bor11]{Borger}
James Borger, \emph{{The basic geometry of Witt vectors, I The affine case}},
  Algebra Number Theory \textbf{5} (2011), no.~2, 231--285.

\bibitem[BS19]{Prismatic}
Bhargav Bhatt and Peter Scholze, \emph{{Prisms and prismatic cohomology}},
  \url{https://arxiv.org/abs/1905.08229v1}.

\bibitem[Dri]{DrinfeldCrystallizationA1}
\emph{Crystallization of the affine line},
  \url{https://math.uchicago.edu/\~drinfeld/Seminar-2019/Winter/Crystallization\%20of\%20affine\%20line.pdf}.

\bibitem[Dug05]{Dugger}
Daniel Dugger, \emph{{An Atiyah-Hirzebruch spectral sequence for $K\mathbb
  R$-Theory}}, K-Theory \textbf{35} (2005), no.~3-4, 213--256.

\bibitem[Elm83]{Elmendorf}
A.~D. Elmendorf, \emph{Systems of fixed point sets}, Transactions of the
  American Mathematical Society \textbf{277} (1983), no.~1, 275--284.

\bibitem[Fon77]{Fpdiv}
Jean-Marc Fontaine, \emph{Groupes $p$-divisibles sur les corps locaux},
  Ast\'erisque, no. 47-48, Soci\'et\'e math\'ematique de France, 1977 (fr).
  \MR{498610}

\bibitem[Fon94]{FontaineCorps}
\bysame, \emph{{Expos\'e II~: Le corps des p\'eriodes $p$-adiques}},
  {P\'eriodes $p$-adiques - S\'eminaire de Bures, 1988} (Jean-Marc Fontaine,
  ed.), Ast\'erisque, no. 223, Soci\'et\'e math\'ematique de France, 1994,
  talk:2, pp.~59--101 (fr). \MR{1293971}

\bibitem[Ger08]{TeenaTR}
Teena Gerhardt, \emph{The {$R(S^1)$}-graded equivariant homotopy of
  {$\operatorname{THH}(\mathbb F_p)$}}, Algebraic {\&} Geometric Topology
  \textbf{8} (2008), no.~4, 1961--1987.

\bibitem[Gla17]{Glasman}
Saul Glasman, \emph{{Stratified categories, geometric fixed points and a
  generalized Arone-Ching theorem}}, \url{https://arxiv.org/abs/1507.01976}.

\bibitem[GM95]{GMTate}
J.~P.~C. Greenlees and J.~P. May, \emph{{Generalized Tate cohomology}}, Memoirs
  of the American Mathematical Society \textbf{113} (1995), no.~543, 0--0.

\bibitem[GM17]{GuillouMay}
Bert Guillou and Peter May, \emph{{Models of $G$-spectra as presheaves of
  spectra}}, \url{https://arxiv.org/abs/1110.3571}.

\bibitem[Hes06]{LarsOCp}
Lars Hesselholt, \emph{On the topological cyclic homology of the algebraic
  closure of a local field}, An Alpine Anthology of Homotopy Theory ({Dominique
  } Arlettaz and {Kathryn } Hess, eds.), American Mathematical Society, 2006,
  pp.~133--162.

\bibitem[Hes07]{HesselholtAxes}
Lars Hesselholt, \emph{{On the $K$-theory of the coordinate axes in the
  plane}}, Nagoya Math. J. \textbf{185} (2007), 93--109.

\bibitem[HHR16a]{HHR}
M.A. Hill, M.J. Hopkins, and D.C. Ravenel, \emph{On the nonexistence of
  elements of {K}ervaire invariant one}, Annals of Mathematics \textbf{184}
  (2016), no.~1, 1--262.

\bibitem[HHR16b]{HHR_KR}
\bysame, \emph{{The slice spectral sequence for the $C_4$ analog of real
  $K$-theory}}, \url{https://arxiv.org/abs/1502.07611}.

\bibitem[HHR17]{HHR_HZ}
Michael~A. Hill, Michael~J. Hopkins, and Douglas~C. Ravenel, \emph{The slice
  spectral sequence for certain {$\RO(C_{p^n})$}-graded suspensions of
  {$H\underline\Z$}}, Boletín de la Sociedad Matemática Mexicana \textbf{23}
  (2017), no.~1, 289--317.

\bibitem[Hil12]{Primer}
Michael~A. Hill, \emph{The equivariant slice filtration: a primer}, Homology,
  Homotopy and Applications \textbf{14} (2012), no.~2, 143--166.

\bibitem[Hil20]{HillHandbook}
Michael~A.\ Hill, \emph{Equivariant stable homotopy theory}, Handbook of
  Homotopy Theory (Haynes Miller, ed.), Chapman and Hall/{CRC}, January 2020,
  pp.~699--756.

\bibitem[HM94]{HMcyclotomic}
Lars Hesselholt and Ib~Madsen, \emph{Topological cyclic homology of perfect
  fields and their dual numbers}, Institut Mittag-Leffler, Djursholm, 1994.

\bibitem[HM97a]{HMCyclicPolytopes}
\bysame, \emph{{Cyclic polytopes and the $K$-theory of truncated polynomial
  algebras}}, Inventiones mathematicae \textbf{130} (1997), no.~1, 73--97.

\bibitem[HM97b]{HMFinite}
\bysame, \emph{{On the $K$-theory of finite algebras over Witt vectors of
  perfect fields}}, Topology \textbf{36} (1997), no.~1, 29 -- 101.

\bibitem[HM19]{MazurTambara}
Michael~A. Hill and Kristen Mazur, \emph{{An equivariant tensor product on
  Mackey functors}}, Journal of Pure and Applied Algebra \textbf{223} (2019),
  no.~12, 5310--5345.

\bibitem[HN20]{HNHandbook}
Lars Hesselholt and Thomas Nikolaus, \emph{Topological cyclic homology},
  Handbook of Homotopy Theory (Haynes Miller, ed.), Chapman and Hall/{CRC}, Jan
  2020, pp.~619--656.

\bibitem[Hoy18]{Hoyois}
Marc Hoyois, \emph{{The homotopy fixed points of the circle action on
  Hochschild homology}}, \url{https://arxiv.org/abs/1506.07123}.

\bibitem[HY18]{NewSlices}
Michael~A. Hill and Carolyn Yarnall, \emph{{A new formulation of the
  equivariant slice filtration with applications to $C_p$-slices}}, Proceedings
  of the American Mathematical Society \textbf{146} (2018), no.~8, 3605--3614.

\bibitem[Kal11a]{KaledinICM}
D.~Kaledin, \emph{Motivic structures in non-commutative geometry}, Proceedings
  of the International Congress of Mathematicians 2010 ({ICM} 2010), Published
  by Hindustan Book Agency ({HBA}), India. {WSPC} Distribute for All Markets
  Except in India, Jun 2011.

\bibitem[Kal11b]{KaledinMackey}
Dmitry~Borisovich Kaledin, \emph{{Derived Mackey functors}}, {Moscow
  Mathematical Journal} \textbf{11} (2011), no.~4, 723--803.

\bibitem[Kaw80]{Kawakubo}
Katsuo Kawakubo, \emph{Equivariant homotopy equivalence of group
  representations}, J. Math. Soc. Japan \textbf{32} (1980), no.~1, 105--118.

\bibitem[KN]{KrauseNikolaus}
Achim Krause and Thomas Nikolaus, \emph{Lectures on topological {Hochschild}
  homology and cyclotomic spectra},
  \url{https://www.uni-muenster.de/IVV5WS/WebHop/user/nikolaus/Papers/Lectures.pdf}.

\bibitem[KN19]{KrauseNikolausBP}
\bysame, \emph{{B\"okstedt periodicity and quotients of DVRs}},
  \url{https://arxiv.org/abs/1907.03477}.

\bibitem[LMS86]{LMS}
L.~Gaunce {Lewis Jr.}, J.~Peter May, and Mark Steinberger, \emph{Equivariant
  stable homotopy theory}, Lecture Notes in Mathematics, no. 1213, Springer
  Berlin Heidelberg, 1986.

\bibitem[Lur18]{Ell2}
Jacob Lurie, \emph{{Elliptic cohomology II: Orientations}},
  \url{https://www.math.ias.edu/~lurie/papers/Elliptic-II.pdf}.

\bibitem[Mad95]{Madsen}
Ib~Madsen, \emph{{Algebraic $K$-theory and traces}}, Current Developments in
  Mathematics \textbf{1995} (1995), no.~1, 191--321.

\bibitem[Mat20]{MathewK1}
Akhil Mathew, \emph{{On $K(1)$-local $\mathrm{TR}$}},
  \url{https://arxiv.org/abs/2005.08744}.

\bibitem[May]{MayTHH}
Peter May, \emph{{Topological Hochschild and cyclic homology and algebraic
  $K$-theory}},
  \url{https://pdfs.semanticscholar.org/1b7d/a48f625142b3b472ce7856d2a4bdbe1e9933.pdf}.

\bibitem[Maz13]{Mazur}
Kristen~Luise Mazur, \emph{{On the structure of Mackey functors and Tambara
  functors}},
  \url{https://sites.lafayette.edu/mazurk/files/2013/07/Mazur-Thesis-4292013.pdf}.

\bibitem[NS18]{NikolausScholze}
Thomas {Nikolaus} and Peter {Scholze}, \emph{{On topological cyclic homology}},
  {Acta Math.} \textbf{221} (2018), no.~2, 203--409.

\bibitem[Rez19]{RezkEtale}
Charles Rezk, \emph{{\'Etale extensions of $\lambda$-rings}},
  \url{https://faculty.math.illinois.edu/~rezk/etale-lambda.pdf}.

\bibitem[Sch17]{ScholzeQ}
Peter Scholze, \emph{Canonical $q$-deformations in arithmetic geometry},
  {Annales de la Facult\'e des sciences de Toulouse : Math\'ematiques}
  \textbf{Ser. 6, 26} (2017), no.~5, 1163--1192.

\bibitem[Spe19a]{SpeirsAxes}
Martin Speirs, \emph{{On the $K$-theory of coordinate axes in affine space}},
  \url{https://arxiv.org/abs/1901.08550}.

\bibitem[Spe19b]{SpeirsTrunc}
\bysame, \emph{{On the $K$-theory of truncated polynomial algebras,
  revisited}}, \url{https://arxiv.org/abs/1901.10602}.

\bibitem[Sul]{SulROGTHH}
Yuri~J.F. Sulyma, \emph{{$RO(\mathbb T)$-graded $\mathrm{TF}$ of perfectoid
  rings}}, In preparation.

\bibitem[Sus83]{SuslinKOC}
A.~Suslin, \emph{{On the $K$-theory of algebraically closed fields}},
  Inventiones Mathematicae \textbf{73} (1983), no.~2, 241--245.

\bibitem[tD79]{tomDieck1979}
Tammo tom Dieck, \emph{Transformation groups and representation theory},
  Springer Berlin Heidelberg, 1979.

\bibitem[Tsa98]{Tsalidis}
Stavros Tsalidis, \emph{{Topological Hochschild homology and the homotopy
  descent problem}}, Topology \textbf{37} (1998), no.~4, 913 -- 934.

\bibitem[TW95]{ZmodCrit}
Jacques Th{\'{e}}venaz and Peter Webb, \emph{{The structure of Mackey
  functors}}, Transactions of the American Mathematical Society \textbf{347}
  (1995), no.~6, 1865--1961.

\bibitem[Ull09]{Ullman}
John~Richard Ullman, \emph{On the regular slice spectral sequence}, 2009,
  \url{http://jrullman.co.nf/thesis.pdf}.

\bibitem[Wil17]{Wilson}
Dylan Wilson, \emph{On categories of slices},
  \url{https://arxiv.org/abs/1711.03472}.

\bibitem[Yar15]{YarnallCyclic}
Carolyn Yarnall, \emph{{The slices of $S^n\wedge H\underline{\mathbb Z}$ for
  cyclic $p$-groups}}, \url{https://arxiv.org/abs/1510.02077}.

\bibitem[Zen17]{Zengv1}
Mingcong Zeng, \emph{{$RO(G)$-graded homotopy Mackey functor of
  $H\underline{\mathbb Z}$ for $C_{p^2}$ and homological algebra of
  $\underline{\mathbb Z}$-modules}}, \url{https://arxiv.org/abs/1710.01769v1}.

\bibitem[Zen18]{Zeng}
\bysame, \emph{{Equivariant Eilenberg-Mac Lane spectra in cyclic $p$-groups}},
  \url{https://arxiv.org/abs/1710.01769}.

\bibitem[{Zho}20]{Zhouhang}
{Zhouhang Mao}, \emph{{Perfectoid rings as Thom spectra}},
  \url{https://arxiv.org/abs/2003.08697}.

\end{thebibliography}

\end{document}